\def\kk{{\Bbbk}}
\def\B'c{{\mathcal{B'}}}
\def\U'c{{\mathcal{U'}}}
\def\xb{{\bold x}}
\def\mb{{\bold m}}
\def\opn#1#2{\def#1{\operatorname{#2}}} 
\opn\chara{char}
\opn\length{\ell}
\opn\projdim{proj\,dim}
\opn\cochord{cochord}
\opn\injdim{inj\,dim}
\opn\ini{in}
\opn\rank{rank}
\opn\height{ht}
\opn\Tiefe{Tiefe}
\opn\grade{grade}
\opn\height{ht}
\opn\embdim{emb\,dim}
\opn\codim{codim}
\opn\Tr{Tr}
\opn\bigrank{big\,rank}
\opn\superheight{superheight}\opn\lcm{lcm}
\opn\trdeg{tr\,deg}%
\opn\reg{reg}
\opn\lreg{lreg}
\opn\deg{deg}
\opn\lcm{lcm}
\opn\set{set}
\opn\ara{ara}
\opn\div{div}
\opn\Div{Div}
\opn\cl{cl}
\opn\Cl{Cl}
\opn\Spec{Spec}
\opn\Supp{Supp}
\opn\supp{supp}
\opn\Sing{Sing}
\opn\Ass{Ass}
\opn\Min{Min}
\opn\Ann{Ann}
\opn\Rad{Rad}
\opn\Soc{Soc}
\opn\Ker{Ker}
\opn\Coker{Coker}
\opn\Im{Im}
\opn\Hom{Hom}
\opn\Tor{Tor}
\opn\Ext{Ext}
\opn\End{End}
\opn\Aut{Aut}
\opn\id{id}
\opn\nat{nat}
\opn\GL{GL}
\opn\SL{SL}
\opn\mod{mod}
\opn\ord{ord}
\opn\depth{depth}
\opn\set{set}
\opn\Shad{Shad}
\opn\pd{pd}
\opn\indmat{indmat}
\opn\reg{reg}
\opn\dim{dim}
\opn\dist{dist}
\opn\aff{aff}
\opn\con{conv}
\opn\relint{relint}
\opn\st{st}
\opn\lk{lk}
\opn\cn{cn}
\opn\core{core}
\opn\vol{vol}
\opn\bight{bight}
\opn\gr{gr}
\opn\d{d}
\opn\Ind{Ind}
\opn\diam{diam}
\def\pot#1#2{#1[\kern-0.28ex[#2]\kern-0.28ex]}
\opn\dirlim{\underrightarrow{\lim}}
\opn\invlim{\underleftarrow{\lim}}
\def\pnt{{\raise0.5mm\hbox{\large\bf.}}}
\def\Implies{\ifmmode\Longrightarrow \else
     \unskip${}\Longrightarrow{}$\ignorespaces\fi}
\def\implies{\ifmmode\Rightarrow \else
     \unskip${}\Rightarrow{}$\ignorespaces\fi}
\def\iff{\ifmmode\Longleftrightarrow \else
     \unskip${}\Longleftrightarrow{}$\ignorespaces\fi}
\newtheorem{Theorem}{Theorem}[section]
\newtheorem{Lemma}[Theorem]{Lemma}
\newtheorem{Corollary}[Theorem]{Corollary}
\newtheorem{Proposition}[Theorem]{Proposition}
\newtheorem{Remark}[Theorem]{Remark}
\newtheorem{Example}[Theorem]{Example}
\newtheorem{Problem}[Theorem]{Problem}
\newtheorem{Question}[Theorem]{Question}
\let\epsilon=\varepsilon
\let\phi=\varphi
\let\kappa=\varkappa
\title{Edge ideals of squares of trees}
\author{Anda Olteanu}
\address{Faculty of Marine Engineering, ``Mirceal cel B\u atr\^an" Naval Academy, Fulgerului Street, no. 1
	900218 Constanta, Romania,} \email{olteanuandageorgiana@gmail.com}
\begin{document}

\maketitle
\begin{abstract}  We describe all the trees with the property that the corresponding edge ideal of the square of the tree has a linear resolution. As a consequence, we give a complete characterization of those trees $T$ for which the square is co-chordal, that is the complement of the square, $(T^2)^c$, is a chordal graph. For particular classes of trees such as paths and double brooms we determine the Krull dimension and the projective dimension.
\end{abstract}

\section*{Introduction}
In 1960, Harary and Ross defined the squares of trees \cite{RH}, and their definition has been extended to squares of graphs. For a finite simple graph $G$, its square, denoted by $G^2$, is the graph with the same vertex set as $G$ and two vertices are adjacent in $G^2$ if they are adjacent in $G$ or their distance in $G$ is 2. Properties of  squares of graphs have been intensively studied in combinatorics \cite{AA,AMH,FH,FK,LT,LT1,L,R,R1,SW}. Classes of graphs which are closed to taking squares and more general taking powers, have been determined. Strongly chordal graphs \cite{L,R1}, interval graphs \cite{R}, proper interval graphs \cite{R} are known to have this property. Moreover, many researchers paid attention to recognition of squares of graphs and they developed algorithms for their recognition. The complexity of the problem of recognition of squares of graphs or of square roots of graphs has been determined for several classes of graphs.

Given a finite simple graph $G$ on the vertex set $V(G)=\{1,\ldots,n\}$ and with the set of edges $E(G)$, one may consider the edge ideal of $G$, denoted by $I(G)$, which is defined to be the squarefree monomial ideal in the polynomial ring $S=\kk[x_1,\ldots, x_n]$ which is generated by the squarefree monomials which correspond to the edges of $G$, that is $I(G)=\langle x_ix_j: \{i,j\}\in E(G)\rangle$. During the past decades, researchers described combinatorial properties of the graph $G$ in terms of algebraic and homological invariants of the graph $G$ and vice-versa \cite{F,HHZ,HT,HaTu1,HaTu,HeHi,II,Ka,K,MV,Vi,W}. A well-known example is the one of Fr\"oberg who proved that the edge ideal of a graph  has a linear resolution if and only if the graph is co-chordal, that is its complement is a chordal graph \cite{F}. Moreover, Woodroofe gave an upper-bound of the Castelnuovo--Mumford regularity in terms of the co-chordal number of a graph \cite{W}. The projective dimension, the Betti numbers and the Krull dimension of the edge ideal have also been related to combinatorial invariants of the graph (see for instance \cite{HaTu1,HaTu,K,MV,W}).

We aim at studying the behaviour of algebraic and homological invariants of the corresponding edge ideal of the square of a tree.  We give a complete characterization of the trees for which the edge ideal of their square has a linear resolution. 

The paper is structured as follows. In the first section we recall all the concepts that will be used through the paper. We distinguish here between combinatorial concepts arising from graph theory and notions from commutative algebra and describe the connections between them. The second section is devoted to recalling the characterization of squares of trees given by Harary and Ross \cite{RH}. We also prove a result which allows us to extend all the results obtained in this paper to larger classes of graphs, (Proposition~\ref{general}). 

In the third section, we characterize all trees for which the edge ideal of their square has a linear resolution. As a consequence, we give a complete characterization of those trees $T$ for which the square is co-chordal. 

The fourth section is devoted to particular classes of trees such as paths and double brooms. Since their square are chordal graphs, we may use the results developed by Kimura \cite{K}. By using combinatorial techniques, we compute invariants as the Krull dimension, the projective dimension and the Castelnuovo--Mumford regularity of edge ideals of squares of the path graph and of the double brooms.

In the end of the paper, we consider several problems that arise naturally. 

\section{Preliminaries}
\label{sec:1}
We review some standard facts on graph theory and edge ideals and we setup the notation and terminology that will be used through the paper. A more complete theory can be obtained by \cite{F,HaTu1,HeHi,MV,Vi}. 
\subsection{Notions from graph theory}
Let $G$ be a finite simple graph with the vertex set $V(G)$ and the set of edges $E(G)$. Two vertices $x,y\in V(G)$ are called \textit{adjacent} (or \textit{neighbours}) if $\{x,y\}\in E(G)$. For a vertex $x$ of $G$, we denote by $\mathcal{N}(x)$ the set of all the neighbours of $x$, also called the \textit{neighbourhood} of $x$. More precisely, $\mathcal{N}(x)=\{y\in V(G)\,:\,\{x,y\}\in E(G)\}$. Moreover, let $\mathcal{N}[x]=\mathcal{N}(x)\cup\{x\}$ be \textit{the closed neighbourhood of $x$}. \textit{The degree of the vertex $x$}, denoted by $\deg(x)$, is defined to be $\deg(x)=|\mathcal{N}(x)|$. By \textit{a free vertex} we mean a vertex of degree $1$. A vertex which is adjacent to a free vertex will be called a \textit{next-point}. A graph is called \textit{complete} if it has the property that any two vertices are adjacent. The complete graph with $n$ vertices is usually denoted by $\mathcal{K}_n$. By \textit{a subgraph} $H$ of $G$ we mean a graph with the property that $V(H)\subseteq V(G)$ and $E(H)\subseteq E(G)$. One says that a subgraph $H$ of $G$ is \textit{induced} if whenever $x,y\in V(H)$ so that $\{x,y\}\in E(G)$ then $\{x,y\}\in E(H)$. If $W\subseteq V$, we will denote by $G_W$ \textit{the induced subgraph of $G$ with the set of vertices $W$}. \textit{A clique} in $G$ is an induced subgraph which is a complete graph. For a vertex $x$, we denote by $C(x)$ the maximal clique containing $x$. A vertex $x$ is called \textit{cliqual} if there is at least one clique $C$ in $G$ so that $x\in V(C)$. Taking into account the number of cliques which contain a vertex, one says that a vertex is \textit{unicliqual} if there is in exactly one clique and it is \textit{multicliqual} if it is in more than one clique. Two vertices are \textit{cocliqual} if there is a clique containing both of them. A vertex is called \textit{neighbourly} if it is cocliqual with every vertex from its neighbourhood. \textit{A cut-point} is a vertex with the property that, after its removal, the number of connected components increases.  

\textit{A path of length }$t\geq2$ in $G$ is, by definition, a set of distinct vertices $x_0,x_1,\ldots,x_t$ such that $\{x_i,x_{i+1}\}$ are edges in $G$ for all $i\in\{0,\ldots,t-1\}$. \textit{The distance between two vertices $x$ and $y$}, denoted by $\dist_G(x,y)$, is defined to be the length of a shortest path joining $x$ and $y$. If there is no path joining $x$ and $y$, then $\dist_G(x,y)=\infty$. We will drop the subscript when the confusion is unlikely. One can define similarly the distance between two edges. If $e $ and $e'$ are two distinct edges, then \textit{the distance between the edges $e$ and $e'$} is denoted by $\dist(e,e')$ and is defined as the minimum integer $\ell$ such that there is a set of distinct edges $e=e_0,\ldots,e_{\ell}=e'$ such that $e_i\cap e_j\neq\emptyset$, for all $i\neq j$, $1\leq i,j \leq \ell$. \textit{The diameter of the graph $G$}, denoted by $\diam(G)$, is the maximum of all the distances between any two vertices in $G$, namely $$\diam(G)=\max\{\dist(x,y): \ x,y\in V(G)\}.$$

\textit{A cycle of length $n\geq3$}, usually denoted by $C_n$, is a graph with the vertex set $[n]=\{1,\ldots,n\}$ and the set of edges $\{i,i+1\}$, where $n+1=1$ by convention. A graph is \textit{chordal} if it doesn't have any induced cycles of length strictly greater than $3$. A graph is called a \textit{tree} if it is connected and it doesn't have cycles. Note that any tree is a chordal graph. Moreover the vertices of a tree are either free or cut-points.

For a graph $G$, let $G^c$ be \textit{the complement of the graph} $G$ that is the graph with the same vertex set as $G$ and $\{x,y\}$ is an edge of $G^c$ if it is not an edge of $G$. A graph $G$ is called \textit{co-chordal} if $G^c$ is a chordal graph. One says that the edges $\{x,y\}$ and $\{u,v\}$ of $G$ form \textit{an induced gap} in $G$ if $x,y,u,v$ are the vertices of an induced cycle of length $4$ in $G^c$. A graph $G$ is called \textit{gap-free} if it doesn't contain any induced gaps.

\textit{The square of the graph} $G$, denoted by $G^2$, is defined to be the graph with the same set of vertices as $G$ and with the edges \[E(G^2)=E(G)\cup\{\{x,y\}:\dist_G(x,y)=2\}.\] Harary and Ross \cite{RH} proved that the squares of trees are chordal graphs, a property which will be essential through the rest of the paper.
\subsection{Edge ideals}

Given a finite simple graph $G$ with the vertex set $V(G)=\{1,\ldots,n\}=[n]$ and the set of edges $E(G)$, one may consider its \textit{edge ideal} which is the squarefree monomial ideal denoted by $I(G)\subseteq S=\kk[x_1,\ldots,x_n]$, where $\kk$ is a field, defined by $I(G)=\langle x_ix_j\ :\ \{i,j\}\in E(G)\rangle $. Whenever it will be clear from the context, we will use the same notation for both variables and vertices. For a squarefree monomial $\mb=x_{i_1}\cdots x_{i_d}$, \textit{the support of the monomial $\mb$}, denoted by $\supp(\mb)$, is defined to be the set of all the variables dividing $\mb$, that is $\supp(\mb)=\{x_j:\ x_j\mid \mb\}=\{x_{i_1},\ldots,x_{i_d}\}$. Conversely, if $F=\{i_1,\ldots,i_d\}\subseteq [n]$, then the corresponding squarefree monomial is $\xb_F=\prod\limits_{j\in F}x_j=x_{i_1}\cdots x_{i_d}$, therefore $\supp\left(\xb_F\right)=F$. 

Edge ideals have been intensively studied and properties of invariants such that Betti numbers, projective dimension, Castelnuovo--Mumford regularity, or Krull dimension have been established for several classes of graphs (see \cite{HeHi,MV,Vi} for more details). We recall that, if $I\subseteq S=\kk[x_1,\ldots,x_n]$ is an ideal and $\mathcal{F}$ is the minimal graded free resolution of $S/I$ as an $S$-module:
\[\mathcal{F}: 0\rightarrow\bigoplus\limits_jS(-j)^{\beta_{pj}}\rightarrow\cdots\rightarrow\bigoplus\limits_j S(-j)^{\beta_{1j}}\rightarrow S\rightarrow S/I\rightarrow0,\] then \textit{the projective dimension of $S/I$} is \[\projdim \,S/I=\max\{i:\beta_{ij}\neq 0\}\] and \textit{the Castelnuovo--Mumford regularity} is \[\reg\, S/I=\max\{j-i:\beta_{ij}\neq0\}.\]
In the sequel we recall connections between Krull dimension, regularity and projective dimension of the edge ideal and combinatorial invariants of the graph. We start with the Krull dimension. 
A subset $W$ of $V(G)$ is called \textit{an independent set of $G$} if, for all $u,v\in W$ such that $u\neq v$, one has $\{u,v\}\notin E(G)$. 
It is well-known that one may compute the Krull dimension of $S/I(G)$ by using independent sets:
\begin{Proposition}\label{dimindep}
	$\dim S/I(G)=\max\{|W|:\ W\mbox{ is an independent set of }G\}.$
\end{Proposition}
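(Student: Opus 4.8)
The plan is to reduce the computation of the Krull dimension to the minimal prime decomposition of the squarefree monomial ideal $I(G)$ and then translate the resulting combinatorial data into independent sets. Since $I(G)$ is generated by squarefree monomials, it is a radical ideal, so it equals the intersection of its minimal primes, each of which is again a monomial prime of the form $\mathfrak{p}_C=\langle x_i:i\in C\rangle$ for some $C\subseteq V(G)$. The first step is to identify precisely which subsets $C$ arise. I would observe that $\mathfrak{p}_C\supseteq I(G)$ if and only if for every edge $\{i,j\}\in E(G)$ the generator $x_ix_j$ lies in $\mathfrak{p}_C$, equivalently $i\in C$ or $j\in C$; that is, $C$ is a \emph{vertex cover} of $G$ (a set meeting every edge). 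The minimal primes of $I(G)$ therefore correspond bijectively to the minimal vertex covers of $G$.

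Next I would compute the dimension contributed by each minimal prime. For a monomial prime $\mathfrak{p}_C$ the quotient $S/\mathfrak{p}_C$ is a polynomial ring in the $n-|C|$ variables indexed by $V(G)\setminus C$, so $\dim S/\mathfrak{p}_C=n-|C|$. Using the general identity $\dim S/I(G)=\max\{\dim S/\mathfrak{p}:\mathfrak{p}\in\Min(I(G))\}$, this yields
\[
\dim S/I(G)=\max\{\,n-|C|:C\text{ is a vertex cover of }G\,\},
\]
where one may range over all vertex covers since the maximum of $n-|C|$ is attained at a minimal one.

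Finally I would invoke the elementary duality between vertex covers and independent sets: a set $W\subseteq V(G)$ is independent exactly when its complement $V(G)\setminus W$ is a vertex cover, because $W$ contains no edge if and only if every edge has an endpoint outside $W$. Setting $C=V(G)\setminus W$ gives $|W|=n-|C|$, so the two extremal problems coincide and the asserted formula follows. The only genuine input beyond this bookkeeping is the structural fact that the minimal primes of a monomial ideal are again monomial primes generated by variables; everything else is a direct translation, so I do not expect a serious obstacle. As an alternative one could phrase the whole argument through the Stanley--Reisner correspondence, noting that $I(G)$ is the Stanley--Reisner ideal of the independence complex of $G$ whose faces are exactly the independent sets, and then apply the standard identity $\dim S/I_\Delta=\dim\Delta+1=\max\{|F|:F\in\Delta\}$.
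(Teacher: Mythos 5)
Your proof is correct. Note that the paper itself offers no proof of this proposition -- it is stated as a well-known fact (it is essentially the standard dictionary between squarefree monomial ideals and simplicial complexes) -- so there is nothing to compare against; your argument via minimal primes, the identification of $\mathfrak{p}_C \supseteq I(G)$ with vertex covers $C$, and the complementation duality between vertex covers and independent sets is exactly the standard justification, and your alternative phrasing through the Stanley--Reisner ideal of the independence complex is the other equally standard route. Either version would serve as a complete proof of the statement as used in the paper.
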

A subset $M=\{e_1,\ldots,e_s\}$ of $E(G)$ is called \textit{a matching of $G$} if for all $i\neq j$, one has $e_i\cap e_j=\emptyset$. \textit{An induced matching} in $G$ is a matching which forms an induced subgraph of $G$.\textit{ The induced matching number} of $G$, denoted by $\indmat(G)$, is defined to be the number of edges in a largest induced matching, that is 
$$\indmat(G)=\max\{|M|:\ M\mbox{ is an induced matching in }G\}.$$
\textit{The co-chordal cover number of $G$}, denoted $\cochord(G)$, is the minimum number of co-chordal subgraphs required to cover the edges of $G$.

In between the induced matching number, the co-chordal cover number, and the Castelnuovo--Mumford regularity there are the following connections:
\begin{Proposition}\label{indmatgen}
	Let $G$ be a finite simple graph. Then
	\begin{itemize}
		\item[a)] $\reg\, S/I(G)\geq\indmat(G)$, \cite[Lemma 2.2]{Ka}.
		\item[b)] If $G$ is a chordal graph, then $\reg\, S/I(G)=\indmat(G)$, \cite[Corollary 6.9]{HaTu}.
		\item[c)] Over any field $\kk$, $\reg\, S/I(G)\leq\cochord(G)$ \cite[Theorem 1]{W} .
	\end{itemize}
\end{Proposition}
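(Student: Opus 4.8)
The plan is to establish the three parts independently, since each is a self-contained statement about how regularity interacts with a different combinatorial invariant.

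For part (a), I would use monotonicity of regularity under induced subgraphs: if $W\subseteq V(G)$, then $\reg S/I(G_W)\leq\reg S/I(G)$, which is a standard consequence of Hochster's formula expressing the graded Betti numbers of a squarefree monomial ideal through the reduced homology of induced subcomplexes of the associated simplicial (independence) complex. I would then take $W$ to be the vertex set of a largest induced matching $M=\{e_1,\dots,e_s\}$, so that $G_W$ is a disjoint union of $s$ edges and $I(G_W)$ is a complete intersection of $s$ quadrics in pairwise disjoint variables. Regularity is additive across such a tensor-product decomposition, and a single edge $\{x,y\}$ gives $S/(xy)$ with $\reg=1$; hence $\reg S/I(G_W)=s=\indmat(G)$. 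Combining this with monotonicity yields $\reg S/I(G)\geq\indmat(G)$.

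For part (c), I would first invoke Fröberg's theorem, recalled in the introduction: a graph has a linear resolution precisely when it is co-chordal. Fix a cover of $E(G)$ by co-chordal subgraphs $G_1,\dots,G_k$ realizing $k=\cochord(G)$, so that $I(G)=I(G_1)+\cdots+I(G_k)$ and each $I(G_i)$ has a linear resolution, i.e.\ $\reg S/I(G_i)=1$. The crux is then a regularity estimate for sums of ideals: writing $J_i=I(G_1)+\cdots+I(G_i)$, I would run the short exact sequence $0\to S/(J_{i-1}\sect I(G_i))\to S/J_{i-1}\dirsum S/I(G_i)\to S/J_i\to0$ and read off regularity from the associated long exact sequence in $\Tor$. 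A Mayer--Vietoris induction on $k$, in which each co-chordal piece contributes regularity at most $1$, then delivers $\reg S/I(G)\leq k=\cochord(G)$.

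For part (b), the equality $\reg S/I(G)=\indmat(G)$ for chordal $G$ is the deepest of the three and is where I expect the genuine difficulty. The lower bound is already part (a), so the task reduces to the matching upper bound $\reg S/I(G)\leq\indmat(G)$. Here I would exploit the defining feature of chordal graphs, namely a perfect elimination ordering, equivalently a simplicial vertex $v$ whose closed neighbourhood $\mathcal{N}[v]$ induces a clique. Peeling off $v$ relates $S/I(G)$ to the edge ideals of the induced subgraphs $G\setminus v$ and $G\setminus\mathcal{N}[v]$, both again chordal, and I would induct on the number of vertices. The main obstacle is the bookkeeping: one must control how deleting a simplicial vertex and its neighbourhood changes the induced matching number, so that the inductive regularity bounds assemble exactly to $\indmat(G)$ rather than to something strictly larger. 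Managing this interaction between the homological recursion and the combinatorics of induced matchings is the heart of the argument.
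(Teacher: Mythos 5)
The first thing to note is that the paper itself contains no proof of this proposition: it is a compilation of three results quoted from the literature, namely Katzman \cite{Ka} for a), H\`a--Van Tuyl \cite{HaTu} for b), and Woodroofe \cite{W} for c). Your attempt is therefore measured against those cited proofs rather than against anything in the paper. Your part a) is correct and is the standard argument: restrict to the induced subgraph on the vertex set of a largest induced matching (monotonicity of regularity under induced subgraphs, as in Proposition~\ref{regind}), observe that this subgraph is a disjoint union of edges whose edge ideal is a complete intersection of quadrics in disjoint variables, and use additivity of regularity over such a decomposition.

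Parts b) and c), however, contain genuine gaps. For c), the Mayer--Vietoris induction you describe does not close. From the exact sequence $0\to S/(J_{i-1}\cap I(G_i))\to S/J_{i-1}\oplus S/I(G_i)\to S/J_i\to 0$ one only gets
$$\reg S/J_i\ \leq\ \max\bigl\{\reg S/J_{i-1},\ \reg S/I(G_i),\ \reg S/(J_{i-1}\cap I(G_i))-1\bigr\},$$
so the induction forces you to bound the regularity of the \emph{intersection} $J_{i-1}\cap I(G_i)$, and this is exactly the term your sketch leaves uncontrolled: an intersection of edge ideals is not an edge ideal, and nothing in your setup keeps its regularity small. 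This is precisely why Woodroofe's proof of c) does not proceed by such an induction but instead invokes the Kalai--Meshulam theorem on unions and intersections of Leray complexes (listed as \cite{KM} in this paper's bibliography), whose content for squarefree monomial ideals is $\reg S/(I+J)\leq \reg S/I+\reg S/J$; that theorem is proved by a simultaneous induction controlling unions and intersections together, which is the missing ingredient here. For b), you do not actually give a proof at all: you name the strategy (peel off a simplicial vertex and induct on the number of vertices) and then explicitly defer the key step --- controlling how $\indmat$ behaves under deleting $v$ or $\mathcal{N}[v]$ so that the recursion assembles to exactly $\indmat(G)$ --- calling it ``the heart of the argument'' without carrying it out. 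That step is the substance of \cite[Corollary 6.9]{HaTu}; alternatively, b) follows from your a) together with c) once one shows $\cochord(G)\leq\indmat(G)$ for chordal $G$, which is the route taken in \cite{W}. As written, b) and c) remain unproved in your proposal.
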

The following result describes the behaviour of the Castenuovo--Mumford re\-gularity with respect to induced subgraphs.
\begin{Proposition}\cite[Proposition 3.8]{MV}\label{regind}
	If $H$ is an induced subgraph of $G$, then $\reg\, S/I(H)\leq\reg\, S/I(G)$.
\end{Proposition}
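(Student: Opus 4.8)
The plan is to extract both regularities from the reduced simplicial homology of induced subcomplexes of a single complex---the independence complex---and then to observe that the family of subsets relevant for $H$ sits inside the family relevant for $G$. Throughout I regard $I(H)$ as an ideal of $S_W=\kk[x_i:i\in W]$, where $W=V(H)$; since adjoining the remaining variables as free variables leaves the regularity unchanged, passing between $S$ and $S_W$ is harmless.

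First I recall that $I(G)$ is the Stanley--Reisner ideal of the independence complex $\Ind(G)$, whose faces are the independent sets of $G$: the minimal nonfaces of $\Ind(G)$ are exactly the edges of $G$. Hochster's formula (see \cite{HeHi}) then computes the multigraded Betti numbers of $S/I(G)$ by reduced homology of restrictions: for $\sigma\subseteq V(G)$,
\[
\beta_{i,\sigma}\bigl(S/I(G)\bigr)=\dim_{\kk}\tilde H_{|\sigma|-i-1}\bigl(\Ind(G)|_{\sigma};\kk\bigr),\qquad \Ind(G)|_{\sigma}=\{F\in\Ind(G):F\subseteq\sigma\}.
\]
Since $\reg\,S/I(G)=\max\{|\sigma|-i:\beta_{i,\sigma}\neq0\}$ and $|\sigma|-i=(|\sigma|-i-1)+1$, setting $d=|\sigma|-i-1$ gives
\[
\reg\,S/I(G)=1+\max\{\,d:\ \tilde H_{d}\bigl(\Ind(G)|_{\sigma};\kk\bigr)\neq0\ \text{for some }\sigma\subseteq V(G)\,\}.
\]

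The key point is that restriction commutes with passing to induced subgraphs. For $\sigma\subseteq V(G)$, a set $F\subseteq\sigma$ is independent in $G$ if and only if it is independent in the induced subgraph $G_{\sigma}$, so $\Ind(G)|_{\sigma}=\Ind(G_{\sigma})$. In particular, if $H=G_{W}$, then for every $\sigma\subseteq W$ we have $\Ind(H)|_{\sigma}=\Ind(G)|_{\sigma}$, and the two reduced homology groups coincide. Applying the displayed formula to both $H$ and $G$, the maximum defining $\reg\,S/I(H)$ runs over $\sigma\subseteq W$, while that defining $\reg\,S/I(G)$ runs over the larger family $\sigma\subseteq V(G)$; as the relevant homology groups agree on the common range, the first maximum is at most the second, whence $\reg\,S/I(H)\leq\reg\,S/I(G)$.

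The step I would watch most carefully is the index bookkeeping in Hochster's formula, so as to be certain that the quantity being maximized is genuinely $j-i$ (and hence, after the shift, the homological degree plus one) rather than $j$ alone, and that the conventions for $\tilde H_{-1}$ and the void complex introduce no spurious terms; once the identity $\Ind(G)|_{\sigma}=\Ind(G_{\sigma})$ is in place, the inequality itself is forced, being a maximum over a subfamily. I note that the alternative, resolution-free route through the short exact sequence
\[
0\to S/(I(G):x_v)(-1)\to S/I(G)\to S/\bigl(I(G)+(x_v)\bigr)\to0
\]
handles a single-vertex deletion but only reduces $\reg\,S/I(G\setminus v)$ to a bound involving $\reg\,S/(I(G):x_v)=\reg\,S/I(G\setminus\mathcal{N}[v])$, i.e.\ the regularity of yet another induced subgraph; this does not by itself dissolve the apparent circularity, which is exactly why the homological computation above is the cleaner path.
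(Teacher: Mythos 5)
Your proof is correct, and every step checks out: Hochster's formula is quoted in the right form for $S/I(G)$, the identification $\Ind(G)|_{\sigma}=\Ind(G_{\sigma})$ is exactly what makes induced subgraphs (as opposed to arbitrary subgraphs) work, and the inequality then follows because the maximum for $H$ runs over a subfamily of the multidegrees available for $G$. The paper itself gives no proof — it simply cites \cite{MV} — and your Hochster-formula restriction argument is the standard proof of that cited result, so there is nothing to contrast.
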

\section{Properties of squares of graphs}
\label{sec:2}
In this section we recall several properties of squares of trees which will be used in the next section and we will prove a result which allows us to extend the results obtained in this paper to larger classes of graphs which are no longer trees.

Harary and Ross gave a complete characterization of the square of a tree, \cite{RH}. We recall here their characterization and we will emphasize those proper\-ties that will be used in the rest of the paper. 

We start with some properties of squares of trees.
\begin{Proposition}\cite{RH}\label{propT2}
	Let $T$ be a tree and $T^2$ its square. The following statements hold:
	\begin{itemize}
		\item[a)] The graph $T^2$ is a complete graph if and only if $T$ has exactly one next-point that is $T$ is a star graph. 
		\item[b)] Every clique of $T^2$ is the neighbourhood of a cut-point of $T$, and conversely.
		\item[c)] Assume that $T^2$ is not a complete graph. A vertex is a multicliqual point in $T^2$ if and only if it is a cut-point in $T$.
		\item[d)] Two cut-points of $T$ are adjacent in $T$ if and only if their neighbourhoods are cliques in $T^2$ that intersect in exactly two vertices.
		\item[e)] The graph $T^2$ has no cut-points.
	\end{itemize}
\end{Proposition}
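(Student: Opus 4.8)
The plan is to treat part (b) as the structural backbone and to deduce (c) and (d) from it, while (a) and (e) are handled by direct distance arguments. Throughout, the ``neighbourhood'' attached to a cut-point is read as the closed neighbourhood $\mathcal{N}_T[v]$ in $T$, and ``clique'' means maximal clique; recall also that in a tree a vertex is a cut-point precisely when it has degree at least $2$, i.e. when it is not free.

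For (a), I would observe that $T^2$ is complete if and only if every pair of vertices lies at distance at most $2$ in $T$, that is $\diam(T)\le 2$, and that a tree of diameter at most $2$ is a star (take the midpoint of a longest path as centre). To match this with the ``exactly one next-point'' formulation, I would note that every leaf is adjacent to a next-point, so if there is a unique next-point $c$ then every leaf is $T$-adjacent to $c$; rooting $T$ at $c$ and observing that any vertex at distance $\ge 2$ from $c$ would force a leaf at distance $\ge 2$ from $c$ then shows $T$ is the star with centre $c$. The reverse implication (a star has complete square) is immediate.

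The heart of the argument is (b). First I would check the easy inclusion: if $v$ is a cut-point then $v$ is $T$-adjacent to all of $\mathcal{N}_T(v)$, and any two vertices of $\mathcal{N}_T(v)$ are joined by the length-$2$ path through $v$, so $\mathcal{N}_T[v]$ is a clique of $T^2$; maximality follows because a vertex $w\notin\mathcal{N}_T[v]$ that is $T^2$-adjacent to $v$ must satisfy $\dist_T(w,v)=2$ through a unique neighbour $u$ of $v$, whence $\dist_T(w,u')=3$ for every other neighbour $u'$ of $v$, so $w$ cannot be $T^2$-adjacent to all of $\mathcal{N}_T[v]$. For the converse I would take a maximal clique $K$ of $T^2$, pick $x,y\in K$ with $\dist_T(x,y)=2$ realised by a midpoint $v$ (the degenerate case where every pair of $K$ is at $T$-distance at most $1$ only occurs when $T$ is a single vertex or edge), and show that every $z\in K$ projects onto $v$ on the unique path from $x$ to $y$: projecting onto $x$ or $y$ would force $\dist_T(z,y)$ or $\dist_T(z,x)$ to be at least $3$. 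Hence $\dist_T(z,v)\le 1$ for all $z\in K$, giving $K\subseteq\mathcal{N}_T[v]$ with $\deg(v)\ge 2$, and maximality forces $K=\mathcal{N}_T[v]$. This projection step is where the tree (unique-path) hypothesis is essential, and it is the part I expect to require the most care.

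Parts (c) and (d) then reduce to bookkeeping over the correspondence in (b). For (c), a vertex is multicliqual if and only if it lies in two of the sets $\mathcal{N}_T[v]$; if $x$ is a cut-point then, since $T$ is not a star, $x$ has a non-leaf (hence cut-point) neighbour $v$, so $x\in\mathcal{N}_T[x]\cap\mathcal{N}_T[v]$ lies in two maximal cliques, whereas a free vertex has its unique neighbour as its only possible clique-centre and is therefore unicliqual. For (d), if cut-points $u,v$ are $T$-adjacent then $\{u,v\}\subseteq\mathcal{N}_T[u]\cap\mathcal{N}_T[v]$, and any further common element would be a common $T$-neighbour of $u$ and $v$, producing a triangle in $T$; conversely, if $u\not\sim_T v$ the intersection consists only of common $T$-neighbours, of which a tree admits at most one, so it cannot have size $2$. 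Finally, for (e) I would fix a vertex $x$ and show $T^2\setminus\{x\}$ is connected: each component $C_i$ of $T\setminus\{x\}$ already spans a connected subgraph of $T^2\setminus\{x\}$ (its square embeds, since $T$-distances inside $C_i$ are intrinsic), and the neighbours $u_i$ of $x$, lying in distinct components, are pairwise at $T$-distance $2$ and hence pairwise $T^2$-adjacent, so they glue the components together without using $x$.
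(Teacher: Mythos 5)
The paper does not prove this proposition at all: it is imported verbatim from Harary and Ross \cite{RH} as a known result, so there is no internal argument to measure yours against. Judged on its own, your proof is correct, and its architecture is sound. Your reading conventions (``clique'' $=$ maximal clique, ``neighbourhood of a cut-point'' $=$ closed neighbourhood $\mathcal{N}_T[v]$) are the ones under which the statement is true, and the heart of the matter is exactly where you put it: in (b), the gate/projection property of trees forces every $z$ in a maximal clique $K$ of $T^2$ to project onto the midpoint $v$ of a length-two $T$-geodesic between two members of $K$, since projecting onto an endpoint would put $z$ at $T$-distance at least $3$ from the other endpoint; this gives $K\subseteq\mathcal{N}_T[v]$, and maximality of both sides finishes it. Parts (c) and (d) then do follow by the bookkeeping you describe, and (a) and (e) are routine distance arguments. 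Two small points would need to be written out in a full version. In (c), multicliquality of a cut-point $x$ needs the two cliques $\mathcal{N}_T[x]$ and $\mathcal{N}_T[v]$ to be \emph{distinct}; this is easy (the cut-point $v$ has a neighbour $w\neq x$, and $w\in\mathcal{N}_T[x]$ would create a triangle in the tree), but you left it implicit. Second, the statement has honest degenerate failures when $T$ has at most two vertices: for a single edge, $T^2$ is complete yet $T$ has two next-points and no cut-point, so (a) and (b) fail as literally stated. You flag this for (b); it is a defect of the cited statement (Harary and Ross tacitly assume the tree is large enough) rather than of your argument.
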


The following theorem provides a characterization of graphs which are squares of trees. 
\begin{Theorem}\cite{RH}\label{charT2}
	Let $T$ be a tree which is not a star graph. Then $G=T^2$ if and only if the following conditions are fulfilled:
	\begin{itemize}
		\item[a)] Every vertex of $G$ is neighbourly and $G$ is connected.
		\item[b)] If two cliques meet at only one vertex $x$, then there is a third clique with which they share $x$ and exactly one other vertex.
		\item[c)] There is a one-to-one correspondence between the cliques and the multicliqual vertices $x$ of $G$ such that the clique $C(x)$ corresponding to $x$  contains exactly as many multicliqual vertices as the number of cliques which include $x$.
		\item[d)] No two cliques intersect in more than two vertices.
		\item[e)] The number of pairs of cliques that meet in two vertices is one less than the number of cliques. 
	\end{itemize}
\end{Theorem}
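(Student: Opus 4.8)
The plan is to prove the two implications separately, in both directions exploiting the dictionary supplied by Proposition~\ref{propT2}: the cliques of $T^2$ are exactly the (closed) neighbourhoods of the cut-points of $T$ (part b)), and the multicliqual vertices of $T^2$ are exactly the cut-points of $T$ (part c)). Composing these two bijections yields a canonical correspondence between the cliques of $T^2$ and its multicliqual vertices, which is the backbone of the whole argument. One combinatorial fact about trees I would isolate at the outset is that, since every vertex of a tree is either free or a cut-point, the cut-points span a subtree $T'$ of $T$; in particular the number of edges of $T$ joining two cut-points equals one less than the number of cut-points.

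For necessity, assume $G=T^2$ and verify (a)--(e) in turn. Part (a) is immediate: if $\{v,w\}$ is an edge of $T^2$ then $\dist_T(v,w)\in\{1,2\}$, and in either case $v$ and $w$ lie in the closed neighbourhood of a common cut-point (their common $T$-neighbour when the distance is $2$, and the cut-point endpoint of the edge when the distance is $1$, using that $T$ is not a star), so every vertex is neighbourly, while connectedness of $T^2$ follows from that of $T$. For (d) I would compute, for cut-points $c_1\ne c_2$, the intersection of the associated cliques directly in $T$: a vertex lies in both closed neighbourhoods only if $\dist_T(c_1,c_2)\le 2$, and uniqueness of paths in a tree forces this intersection to have at most two elements, with exactly two precisely when $c_1$ and $c_2$ are adjacent (Proposition~\ref{propT2}(d)), the two shared vertices being $c_1$ and $c_2$ themselves. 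Part (b) treats a one-vertex intersection: it forces $c_1,c_2$ to lie at distance $2$ with common neighbour $m$, which is again a cut-point, and the clique of $m$ meets each of the two given cliques in exactly the vertex $m$ and one further vertex. For (c) the correspondence above sends the clique of $c$ to $c$; the multicliqual vertices inside that clique are $c$ together with its cut-point neighbours, while the cliques containing $c$ are that of $c$ together with those of its cut-point neighbours, so the two counts agree. Finally (e) follows from (d) and the subtree fact: pairs of cliques meeting in two vertices correspond bijectively to the edges of $T'$, and a tree on $N$ vertices has $N-1$ edges, while $N$ is simultaneously the number of cut-points and, by b), of cliques.

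For sufficiency I would reconstruct a tree from $G$ and show its square is $G$. By (a) every vertex is cliqual, so $V(G)$ splits into multicliqual and unicliqual vertices. Build $T$ on $V(G)$ by declaring two multicliqual vertices adjacent exactly when the cliques associated to them by (c) meet in two vertices (equivalently, when those two vertices \emph{are} the intersection), and by attaching each unicliqual vertex as a leaf to the multicliqual vertex of the unique clique containing it. The graph induced on the multicliqual vertices has $N$ vertices and, by (e), exactly $N-1$ edges.

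The main obstacle is to show that this reconstruction genuinely produces a tree whose square is $G$. The first half is to prove that the graph on the multicliqual vertices is connected: by (a) any two cliques sharing a vertex can be linked, a one-vertex overlap being upgraded to a chain of two-vertex overlaps through the bridging clique furnished by (b); connectedness together with the edge count $N-1$ then forces this graph to be a tree, and grafting the leaves keeps it a tree $T$. The second, genuinely delicate, half is the identity $G=T^2$, i.e. that two vertices are adjacent in $G$ if and only if their $T$-distance is at most $2$. The inclusion $E(G)\subseteq E(T^2)$ amounts to showing that every clique of $G$ is realised as the closed neighbourhood of a cut-point of $T$, and here (c) (matching the multicliqual vertices of a clique with the cliques incident to its corresponding vertex) and (d) (no triple overlaps) are precisely what prevent spurious edges or collapsed cliques; the reverse inclusion requires that no two vertices at $T$-distance $\ge 3$ are joined in $G$, which is where the global count (e) excludes the extra adjacencies a merely local analysis would allow. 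I expect the bookkeeping tying (c)--(e) together so as to control both inclusions simultaneously to be the crux of the proof.
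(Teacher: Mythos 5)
First, a remark on the comparison itself: the paper contains no proof of Theorem~\ref{charT2} — it is quoted from Harary and Ross \cite{RH} as a known characterization — so your attempt can only be judged on its own completeness and correctness, not against an argument in the text.

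Your necessity half is essentially right: using Proposition~\ref{propT2} to identify cliques of $T^2$ with closed neighbourhoods of cut-points and multicliqual vertices with cut-points, together with the observation that the cut-points of a tree induce a subtree (so that adjacent cut-point pairs number one less than the cut-points), gives (a)--(e) exactly as you describe. The genuine gap is in the sufficiency half, which is the substantial direction of the Harary--Ross theorem. You construct the candidate tree $T$ (multicliqual vertices joined when their cliques share two vertices, unicliqual vertices hung as leaves on the vertex corresponding to their unique clique), but you never prove $T^2=G$; you explicitly defer it (``I expect the bookkeeping tying (c)--(e) together \dots to be the crux of the proof''). Concretely, three steps are missing: (i) your parenthetical claim that two cliques meet in two vertices precisely when those two vertices \emph{are} the associated multicliqual pair is obvious when $G$ is already known to be a tree square, but for an abstract $G$ satisfying only (a)--(e) it needs an argument (it is essentially a consequence one must extract from (c) and (d), not a restatement of them); (ii) connectedness of the graph induced on the multicliqual vertices is only gestured at — the ``upgrading'' of a one-vertex overlap through the bridging clique of (b) must be turned into an actual path argument before the edge count (e) can certify that this graph is a tree; (iii) both inclusions $E(G)\subseteq E(T^2)$ and $E(T^2)\subseteq E(G)$ are stated as goals, with a description of which hypotheses should be responsible for which inclusion, but neither is established. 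Since the whole content of the converse lies precisely in this verification, the proposal as written proves only the easy direction of the equivalence.
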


The following result evidences that all the results obtained in this paper are valid for larger classes of graphs, even if we will discuss mainly about squares of trees.
\begin{Proposition}\label{general}
	Let $G$ be a graph with the vertex set $V(G)$ and $E(G)$ its set of edges. Let $x\in V(G)$ and $y_1,y_2\in \mathcal{N}(x)$ be free vertices. Let $G_1$ be the graph obtained from $G$ by adding the edge $\{y_1,y_2\}$. Then $G_1^2=G^2$.
\end{Proposition}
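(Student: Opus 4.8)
The plan is to show that adding the edge $\{y_1, y_2\}$ between two free vertices $y_1, y_2$ sharing a common neighbour $x$ does not change which pairs of vertices are at distance $\leq 2$, hence does not change the square. Since $G$ and $G_1$ have the same vertex set, and $G_1$ has exactly one more edge than $G$, I need to verify that $E(G^2) = E(G_1^2)$. Recall that $\{u,v\} \in E(H^2)$ precisely when $\dist_H(u,v) \in \{1,2\}$, so it suffices to compare distances in $G$ and in $G_1$.

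**First I would** record the key structural observation about free vertices. Since $y_1$ and $y_2$ are free vertices of $G$ (degree $1$) with $y_1, y_2 \in \mathcal{N}(x)$, their unique neighbour in $G$ is $x$; that is, $\mathcal{N}_G(y_i) = \{x\}$ for $i = 1,2$. In $G_1$, the neighbourhoods become $\mathcal{N}_{G_1}(y_1) = \{x, y_2\}$ and $\mathcal{N}_{G_1}(y_2) = \{x, y_1\}$, while every other vertex retains its $G$-neighbourhood. The inclusion $E(G^2) \subseteq E(G_1^2)$ is immediate, because adding an edge can only decrease distances, so $\dist_{G_1}(u,v) \leq \dist_G(u,v)$ for all $u,v$, and thus any pair at distance $\leq 2$ in $G$ remains at distance $\leq 2$ in $G_1$.

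**The hard part will be** the reverse inclusion $E(G_1^2) \subseteq E(G^2)$, i.e. showing the new edge creates no new pairs at distance $2$. The only shortest paths in $G_1$ that could exploit the new edge $\{y_1, y_2\}$ must pass \emph{through} it, so I would focus on paths of length $\leq 2$ in $G_1$ using this edge. A length-one path is exactly the pair $\{y_1, y_2\}$; but in $G$ one already has $\dist_G(y_1, y_2) = 2$ via the path $y_1, x, y_2$, so $\{y_1, y_2\} \in E(G^2)$ already. For a length-two path through the new edge, one endpoint is $y_1$ or $y_2$; say it is $w, y_1, y_2$ with $\{w, y_1\} \in E(G_1)$. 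Since the only $G_1$-neighbour of $y_1$ besides $y_2$ is $x$, we must have $w = x$, giving the pair $\{x, y_2\}$, which is already an edge of $G$ (hence of $G^2$). By the symmetric argument the only other new length-two connection yields $\{x, y_1\}$, again an existing edge.

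**To conclude,** I would assemble these cases: every edge of $G_1^2$ that does not already lie in $G^2$ would have to come from a shortest $G_1$-path of length $\leq 2$ using the edge $\{y_1, y_2\}$, and the analysis above shows each such path connects only pairs already adjacent or already at distance $2$ in $G$. Therefore $E(G_1^2) \subseteq E(G^2)$, and combined with the trivial forward inclusion we obtain $G_1^2 = G^2$. I expect the argument to be short; the main point requiring care is the exhaustive (but small) case check on paths through the new edge, using crucially that $y_1$ and $y_2$ are \emph{free}, so they introduce no detours to any third vertex.
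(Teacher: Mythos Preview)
Your proof is correct and follows essentially the same approach as the paper's: both establish $E(G^2)\subseteq E(G_1^2)$ trivially, observe $\{y_1,y_2\}\in E(G^2)$ already, and argue that the new edge creates no new distance-$\leq 2$ pairs because the only neighbour of each $y_i$ (other than the other $y_j$) is $x$. The paper phrases the last step as ``$\dist_{G_1}(y_i,z)=\dist_{G}(y_i,z)$ for all $z\notin\{y_1,y_2\}$'' while you unpack this into an explicit check of length-$1$ and length-$2$ paths through the new edge, but the content is the same.
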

\begin{proof} One may first note that the inclusion $E(G^2)\subseteq E(G_1^2)$ always holds. Moreover, $\{y_1,y_2\}\in E(G^2)$ since $\dist_G(y_1,y_2)=2$. Since in $G_1$ we added an edge between two free vertices, we have that $\dist_{G_1}(y_1,z)=\dist_{G}(y_1,z)$ and $\dist_{G_1}(y_2,z)=\dist_{G}(y_2,z)$ for every $z\in V(G)\setminus\{y_1,y_2\}$. Therefore $E(G_1^2)\setminus E(G^2)=\emptyset$.
\end{proof}

\section{Edge ideals of square of trees}
\label{sec:3}
Through this section we characterize all trees with the property that the edge ideal of their square has a linear resolution.  
In \cite{HT}, the authors defined the notion of ideals with linear quotients and proved that they have a linear resolution if they are generated in one degree. In general, there are ideals with a linear resolution which don't have linear quotients. However, for monomial ideals generated in degree $2$, these two notions are equivalent \cite[Theorem 3.2]{HHZ}. Therefore, in order to prove that the corresponding edge ideals have a linear resolution, we will prove that they have linear quotients.

We recall that a monomial ideal $I\subseteq S=\kk[x_1,\ldots,x_n]$, with the minimal system of generators $\mathcal{G}(I)=\{u_1,\ldots,u_r\}$ has \textit{linear quotients} if there is a monomial order of the generators $u_1\prec u_2\prec\cdots\prec u_r$ such that for all $2\leq j\leq r$, the colon ideal $(u_1,\ldots,u_{j-1}):(u_j)$ is generated by a set of variables.  We consider defined on $S$ the reverse lexicographical order, revlex for short, with respect to the order of the variables $x_1>x_2>\cdots>x_n$. We recall that, given two monomials $u=x_1^{a_1}\cdots x_n^{a_n}$ and $v=x_1^{b_1}\cdots x_n^{b_n}$, one says that $u>_{revlex}v$ if $\deg(u)>\deg(v)$ or $\deg(u)=\deg(v)$ and there is an integer $1\leq s\leq n$ such that $a_n=b_n,\ldots,a_{s+1}=b_{s+1}$ and $a_s<b_s$.
\begin{Remark}\rm
	According to the first statement of Proposition~\ref{propT2}, if $T$ is a star graph, then $T^2$ is a complete graph, therefore its edge ideal has a linear resolution.
\end{Remark}

In the sequel, we consider the class of (partially) whiskered stars, that are graphs obtained from the star graph by adding an edge (also called a whisker) to some of the free vertices. If we add a whisker to every free vertex we obtain a whiskered star. Let $T$ be the (partially) whiskered star with the vertex set $$V(T)=\{x_0,x_1,\ldots,x_{n},y_1,\ldots,y_{m}\},$$ where $1\leq m\leq n$. We assume that the set of edges of $T$ is $$E(T)=\{\{x_0,x_i\}:1\leq i\leq n\}\cup\{\{x_i,y_i\}:1\leq i\leq m\}.$$ Therefore $$E\left(T^2\right)=E(C(x_0))\cup\{\{x_0,y_i\}:1\leq i\leq m\}\cup\{\{x_i,y_i\}:1\leq i\leq m\}.$$ Let $S=\kk[x_0,x_1,\ldots,x_{n},y_1,\ldots,y_{m}]$ endowed with the revlex order with res\-pect to $x_0>x_1>\cdots>x_n>y_1>\cdots>y_m$ and $I(T^2)\subseteq S$. Note that we used the same notation for both vertices and variables. Assume that the mini\-mal set of monomial generators of $I(T^2)$ is $\mathcal{G}(I(T^2))=\{u_1,\ldots, u_N\},$ and $u_1>_{revlex}\cdots>_{revlex}u_N$.
\begin{Proposition}\label{wstar}
	Under the above assumptions, the following is true:
	$$\langle u_1,\ldots,u_{i-1}\rangle:\langle u_i\rangle=\left\{\begin{array}{cc}\langle x_0,\ldots,\hat{x}_a,\ldots,x_{b-1}\rangle,& \mbox{if } u_i=x_ax_b,\ 0\leq a<b\leq n\\
	\langle x_1,\ldots,x_n,y_1,\ldots,y_{b-1}\rangle,& \mbox{if } u_i=x_0y_b,\ 1\leq b\leq m\\
	\langle x_0,\ldots,\hat{x}_a,\ldots,x_n\rangle,& \mbox{if } u_i=x_ay_a,\ 1\leq a\leq m
	\end{array}\right.$$ for all $2\leq i\leq N$.
\end{Proposition}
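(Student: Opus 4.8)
The plan is to prove the three colon formulas directly, which simultaneously establishes that $I(T^2)$ has linear quotients for the given revlex order. The three families of generators are the \emph{clique edges} $x_ax_b$ with $0\le a<b\le n$, the \emph{center--whisker edges} $x_0y_b$, and the \emph{whisker edges} $x_ay_a$; every generator has degree $2$, so each colon ideal $\langle u_1,\dots,u_{i-1}\rangle:\langle u_i\rangle$ is generated by the quotients $u_j/\gcd(u_j,u_i)$ for $j<i$. Such a quotient is a single variable exactly when $u_j$ shares one vertex with $u_i$ (it is then the other vertex of $u_j$), and it is the full degree-$2$ monomial $u_j$ when $u_j$ and $u_i$ are disjoint. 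Thus the whole proof reduces to identifying the predecessors of each $u_i$ and reading off their quotients.

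First I would pin down the revlex order on $\mathcal{G}(I(T^2))=\{u_1,\dots,u_N\}$. Using the stated rule, for equal-degree squarefree monomials $u>_{revlex}v$ iff the smallest variable of $\supp(u)\triangle\supp(v)$ lies in $\supp(v)$, I would check three things: every clique edge precedes every edge meeting a whisker (because the smallest variable being compared is then a $y$, which sits in the whisker edge); among clique edges $x_ax_b$ the order is by $b$ ascending and then by $a$ ascending; and among the edges meeting whiskers the order is governed first by the $y$-index ascending and then by placing $x_0y_j$ before $x_jy_j$. This makes the predecessor set of each $u_i$ explicit: for $u_i=x_ax_b$ they are the clique edges $x_cx_d$ with $d<b$, or with $d=b$ and $c<a$; for $u_i=x_0y_b$ they are all clique edges together with $x_0y_j,x_jy_j$ for $j<b$; and for $u_i=x_ay_a$ they are all clique edges together with $x_0y_j,x_jy_j$ for $j<a$ and the single edge $x_0y_a$.

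Then I would run through the predecessors in each case and collect the variable quotients, at the same time checking that every degree-$2$ quotient is divisible by one of these variables and is therefore redundant. For $u_i=x_ax_b$, the predecessors $x_cx_b$ with $c<a$ give $x_c$ and the predecessors $x_ax_d$ with $a<d<b$ give $x_d$, producing exactly $\{x_c:0\le c\le b-1,\ c\neq a\}$; any predecessor disjoint from $\{x_a,x_b\}$ has both indices below $b$ and different from $a$, so its quotient already lies in this ideal. For $u_i=x_0y_b$, the edges $x_0x_d$ contribute all of $x_1,\dots,x_n$ and the edges $x_0y_j$ with $j<b$ contribute $y_1,\dots,y_{b-1}$, while every remaining predecessor carries some $x_c$ with $c\ge1$ and is redundant. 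For $u_i=x_ay_a$, the edges $x_cx_a$ with $c<a$ and $x_ax_d$ with $a<d$ give all $x_c$ with $c\neq a$, and the predecessor $x_0y_a$ gives $x_0$, while every remaining predecessor carries some $x_c$ with $c\neq a$ and is redundant. In each case this reproduces precisely the asserted right-hand side.

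The main obstacle is the bookkeeping of the revlex order and the resulting predecessor sets: one must be sure that no clique edge $x_cx_b$ with $a<c<b$ and no edge of larger $y$-index slips into the predecessors, and that the variable quotients fill the claimed index range with nothing missing and nothing extra. Once the order and the predecessor sets are fixed, the quotient computation is routine and yields the three formulas, which in particular shows that $I(T^2)$ has linear quotients.
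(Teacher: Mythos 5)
Your proof is correct and follows essentially the same route as the paper: a case analysis on the three forms of $u_i$ (clique edge $x_ax_b$, center--whisker edge $x_0y_b$, whisker edge $x_ay_a$), using the revlex order to identify exactly which generators precede $u_i$ and reading off the variables they contribute to the colon ideal. The only cosmetic difference is that you determine the full order of generators up front and invoke the standard description of a monomial colon ideal via the quotients $u_j/\gcd(u_j,u_i)$, whereas the paper proves the two inclusions directly in each case; the underlying computation is identical.
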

\begin{proof} Let $u_i\in\mathcal{G}(I(T^2))$ for some $2\leq i\leq N$.
	We split the proof in three cases induced by the form of the monomial $u_i$.
	
	\textbf{Case 1:} Assume that $u_i=x_ax_b$. In order to prove that $$\langle x_0,\ldots,\hat{x}_a,\ldots,x_{b-1}\rangle\subseteq \langle u_1,\ldots,u_{i-1}\rangle:\langle u_i\rangle,$$ let $0\leq j\leq b-1$, $j\neq a$. Since $x_jx_a>_{revlex}u_i$ and $x_jx_a$ is a squarefree monomial, one has $x_ju_{i}\in\langle u_1,\ldots,u_{i-1} \rangle$, that is $x_{j}\in\langle u_1,\ldots,u_{i-1}\rangle:\langle u_i\rangle$. For the reverse inclusion, let $w\in\langle u_1,\ldots,u_{i-1}\rangle:\langle u_i\rangle$ and we have to prove that $w\in\langle x_0,\ldots,\hat{x}_a,\ldots,x_{b-1}\rangle$. Since we are dealing with monomial ideals, it is enough to consider that $w$ is a monomial. Note that $u_i=x_ax_b$ implies that $\supp(u_j)\subseteq\{x_0,\ldots,x_n\}$, for all $1\leq j\leq i-1$. Then $w u_i\in\langle u_1,\ldots,u_{i-1}\rangle$ implies that there is $j\in\{0,\ldots,i-1\}$ so that $u_j\mid w u_i$. Hence there is $x_\alpha\mid u_j$ and $x_\alpha\nmid u_i$ so that $x_{\alpha}\mid w$. Note that $\alpha\leq\max(u_j)< b$ since $u_j>_{revlex}u_i$. Therefore $\alpha\in\{0,\ldots,b-1\}$. The statement follows.
	
	\textbf{Case 2:} We assume now that $u_i=x_0y_b$, $1\leq b\leq m$ and we use the same strategy as before. Our first goal is to prove that $$\langle x_1,\ldots,x_n,y_1,\ldots,y_{b-1}\rangle\subseteq \langle u_1,\ldots,u_{i-1}\rangle:\langle u_i\rangle.$$ Note that if $1\leq j\leq b-1$ then the monomial $x_0y_j=y_ju_i/y_b>_{revlex}u_i$ and is squarefree, hence $x_ju_{i}\in\langle u_1,\ldots,u_{i-1} \rangle$, that is $y_{j}\in\langle u_1,\ldots,u_{i-1}\rangle:\langle u_i\rangle$. Also, if $1\leq j\leq n$ then the monomial $x_0x_j=x_ju_i/y_b>_{revlex}u_i$ and is squarefree, hence $x_ju_{i}\in\langle u_1,\ldots,u_{i-1} \rangle$, that is $x_{j}\in\langle u_1,\ldots,u_{i-1}\rangle:\langle u_i\rangle$. The second goal is to prove the converse inclusion. Let $w\in\langle u_1,\ldots,u_{i-1}\rangle:\langle u_i\rangle$. We have to prove that $w\in\langle x_1,\ldots,x_n,y_1,\ldots,y_{b-1}\rangle$. As before, it is enough to consider that $w$ is a monomial. Since $w u_i\in\langle u_1,\ldots,u_{i-1}\rangle$, there is a monomial $u_j\mid w u_i$. Since $j\leq i-1$, $u_j$ has one of the following cases: $u_j=x_{\alpha}x_{\beta}$ or $u_j=x_0y_{l}$, with $1\leq l\leq b-1$. In the first case, $\beta>\alpha\geq0$ and $x_{\beta}\mid u_{j}\mid w u_i$, but $x_{\beta}\nmid u_i=x_0y_b$, therefore $x_{\beta}\mid w$. In the second case $y_l\mid u_j\mid w u_i$ and $y_l\nmid u_i=x_0y_b$, hence $y_l\mid w$. In both cases one has that $w\in\langle x_1,\ldots,x_n,y_1,\ldots,y_{b-1}\rangle$. 
	
	\textbf{Case 3:} Assume now that $u_i=x_ay_a$ with $1\leq a\leq m$. Note that for $0\leq j\leq n$, $j\neq a$ one has $x_ju_i=x_jx_ay_a$, $x_jx_a>_{revlex}u_i$ and is a squarefree monomial ideal. Therefore $x_ju_i\in\langle u_1,\ldots,u_{i-1}\rangle$. For the other inclusion, let $w\in\langle u_1,\ldots,u_{i-1}\rangle:\langle u_i\rangle$ and we have to prove that $w\in\langle x_0,\ldots,\hat{x}_a,\ldots,x_n\rangle$. Since $w\in\langle u_1,\ldots,u_{i-1}\rangle:\langle u_i\rangle$, there is some $j$, $1\leq j\leq i-1$ such that $u_j\mid w u_i$. Since $u_j>_{revlex}u_i$ and $u_i=x_ay_a$, there is some $\ell\neq a$ such that $x_\ell\mid u_j$ and $x_\ell\nmid u_i$. As $u_j\mid w u_i$, one must have $x_\ell\mid w$. Therefore, $w\in\langle x_0,\ldots,\hat{x}_a,\ldots,x_n\rangle$.
\end{proof}
The next result follows directly from Proposition~\ref{wstar} and \cite[Theorem 3.2]{HHZ}.

\begin{Theorem}
	If $T$ is a (partially) whiskered star, then $I(T^2)$ has a linear resolution.
\end{Theorem}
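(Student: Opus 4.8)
The plan is to deduce the linear resolution from the linear-quotients property, exactly as the discussion preceding the statement suggests. Since $I(T^2)$ is an edge ideal, all its minimal monomial generators have degree $2$; and for monomial ideals generated in a single degree, having linear quotients is equivalent to having a linear resolution by \cite[Theorem 3.2]{HHZ} (this is also the content of \cite{HT} in the one-degree setting). So it suffices to exhibit an ordering of the minimal generators of $I(T^2)$ with respect to which every colon ideal $\langle u_1,\ldots,u_{i-1}\rangle:\langle u_i\rangle$ is generated by variables.

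First I would list the minimal monomial generators in the revlex-descending order $u_1 >_{revlex} \cdots >_{revlex} u_N$ fixed above, so that the order required by the definition of linear quotients is precisely this one, with the indices matching. Then I would invoke Proposition~\ref{wstar}, which computes each colon ideal $\langle u_1,\ldots,u_{i-1}\rangle:\langle u_i\rangle$ explicitly for all $2\leq i\leq N$ and shows that in each of the three possible shapes of $u_i$ (namely $x_ax_b$, $x_0y_b$, or $x_ay_a$) the resulting colon ideal is generated by a set of variables. This is exactly the linear-quotients condition, so $I(T^2)$ has linear quotients in the chosen order.

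Combining the two ingredients then finishes the argument: $I(T^2)$ is generated in degree $2$ and has linear quotients, hence it has a linear resolution.

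The genuinely substantive work lives entirely in Proposition~\ref{wstar}, which I am granting, so the remaining step is a direct citation. If I had instead to establish Proposition~\ref{wstar} from scratch, the main obstacle would be selecting the correct term order, the revlex order with $x_0 > \cdots > x_n > y_1 > \cdots > y_m$, so that both inclusions defining each colon ideal can be verified; the delicate direction is the containment $\langle u_1,\ldots,u_{i-1}\rangle:\langle u_i\rangle \subseteq \langle\text{variables}\rangle$, where one must argue that any generator $u_j$ dividing $w u_i$ with $j<i$ supplies a variable whose index lies strictly below the relevant threshold, precisely because $u_j >_{revlex} u_i$.
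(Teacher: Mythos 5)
Your proposal is correct and follows exactly the paper's own route: the paper derives this theorem directly from Proposition~\ref{wstar} (linear quotients in the revlex order on the generators) together with the degree-two equivalence of linear quotients and linear resolution from \cite[Theorem 3.2]{HHZ}. Your additional remarks on where the real work lies (the reverse inclusion in Proposition~\ref{wstar}, driven by $u_j >_{revlex} u_i$) accurately reflect the substance of that proposition's proof.
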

For a monomial ideal with linear quotients $I$ with respect to the order of the generators $u_1\succ\cdots\succ u_N$, one denotes $$\set(u_i)=\{x_j:x_j\in\langle u_1,\ldots,u_{i-1}\rangle:\langle u_i\rangle\},$$and $r_i=|\set(u_i)|$ for all $i$, $1\leq i\leq N$.

One may compute the Betti numbers of a monomial ideal with linear quotients by using the above notions, as the following result states. 
\begin{Proposition}\cite{HT}
	Let $I\subseteq S$ be a monomial ideal with linear quotients. Then $$\beta_i(I)=\sum_{k=1}^N {r_k \choose i}.$$In particular, it follows that $$\projdim(I)=\max\{r_1,\ldots,r_N\}.$$
\end{Proposition}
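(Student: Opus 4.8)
The plan is to build the minimal graded free resolution of $S/I$ by an iterated mapping cone, processing the generators in the order dictated by the linear quotients. Set $I_0=\langle 0\rangle$, $I_j=\langle u_1,\ldots,u_j\rangle$ and $d_j=\deg(u_j)$, so that $I_N=I$. For each $j$ there is a short exact sequence
$$0\To\bigl(S/(I_{j-1}:\langle u_j\rangle)\bigr)(-d_j)\stackrel{\cdot u_j}{\To}S/I_{j-1}\To S/I_j\To 0.$$
By hypothesis $I_{j-1}:\langle u_j\rangle=\langle\set(u_j)\rangle$ is generated by the $r_j$ variables of $\set(u_j)$; these form a regular sequence, so $S/(I_{j-1}:\langle u_j\rangle)$ is resolved by the Koszul complex on $\set(u_j)$, whose $i$-th term is free of rank $\binom{r_j}{i}$.

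Proceeding by induction on $j$, I assume a minimal free resolution $G^{(j-1)}$ of $S/I_{j-1}$ is in hand, lift the multiplication map $\cdot u_j$ to a chain map $\psi$ from the shifted Koszul resolution $P$ of $(S/(I_{j-1}:\langle u_j\rangle))(-d_j)$ into $G^{(j-1)}$, and form the mapping cone $\mathrm{Cone}(\psi)$, which resolves $S/I_j$ with $\mathrm{Cone}(\psi)_i=G^{(j-1)}_i\oplus P_{i-1}$. Thus the $j$-th step contributes $\binom{r_j}{i-1}$ new free summands in homological degree $i$. Iterating from $j=1$ to $N$ yields a free resolution of $S/I$ whose $i$-th module has rank $\sum_{j=1}^N\binom{r_j}{i-1}$ for $i\geq 1$. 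Since $\beta_i(I)=\beta_{i+1}(S/I)$, this reads $\beta_i(I)=\sum_{j=1}^N\binom{r_j}{i}$, as claimed---provided each mapping cone is minimal.

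Minimality is the crux and the step I expect to cause the real work. A mapping cone is minimal exactly when every entry of its differential lies in $\mm$; the blocks inherited from $G^{(j-1)}$ and from the Koszul differential already enjoy this (the Koszul entries are literally the variables of $\set(u_j)$), so the issue is confined to the comparison map $\psi$. Here one must be careful: a homotopy alters $\psi$ only by terms with entries in $\mm$, so the ``unit part'' of $\psi$ is homotopy-invariant, and minimality of the cone is genuinely equivalent to the vanishing of the maps on $\Tor(-,\kk)$ induced by $\cdot u_j$. The point where the linear quotients hypothesis is indispensable is precisely this vanishing: because $S/(I_{j-1}:\langle u_j\rangle)$ is a Koszul (complete intersection of variables) module, $\Tor_i$ of the source sits in internal degree $d_j+i$, and I would show---by exhibiting explicit lifts $\psi_i$ built from the combinatorial structure of $\set(u_j)$ (a generalized Eliahou--Kervaire construction), whose entries are monomials of positive degree---that these induced maps vanish. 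This reduces minimality to a direct, if bookkeeping-heavy, verification.

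Granting minimality, the Betti formula $\beta_i(I)=\sum_{k=1}^N\binom{r_k}{i}$ follows from the rank count above. The projective dimension is then read off immediately: $\binom{r_k}{i}\neq 0$ if and only if $i\leq r_k$, so $\beta_i(I)\neq 0$ precisely for $i\leq\max\{r_1,\ldots,r_N\}$, and therefore $\projdim(I)=\max\{r_1,\ldots,r_N\}$.
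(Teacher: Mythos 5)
Your proposal follows exactly the route of the source that the paper cites for this statement (the paper offers no proof of its own, only the reference [HT], whose method is precisely the iterated mapping cone you describe): the short exact sequences, the Koszul resolution of $S/(I_{j-1}:\langle u_j\rangle)$, the rank count $\mathrm{Cone}(\psi)_i=G^{(j-1)}_i\oplus P_{i-1}$ leading to $\beta_i(I)=\sum_k\binom{r_k}{i}$, and the reading-off of $\projdim$ are all correct, and your reduction of minimality to the vanishing of the maps induced on $\Tor(-,\kk)$ (via homotopy-invariance of $\psi\otimes\kk$ when both resolutions are minimal) is sound.

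The gap is that the crux you yourself single out is never actually carried out: ``I would show, by exhibiting explicit lifts, that these induced maps vanish'' is a plan, not a proof, and in the generality in which you state it (linear quotients with generators of arbitrary degrees) it is genuinely nontrivial --- it is the content of [HT, Lemma 1.5]. A pure degree argument does not suffice there: $P_i$ is generated in internal degree $d_j+i$, but $G^{(j-1)}_i$ may contain summands generated in degree $d_k+i-1$ coming from an earlier generator $u_k$ with $d_k=d_j+1$; such orders do occur (for instance $u_1=xw$, $u_2=xyz$, $u_3=wv$ is a linear-quotients order with $d_2=d_3+1$), so unit entries in $\psi_i$ cannot be excluded on degree grounds alone, and one must really construct lifts whose entries are monomials of positive degree. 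By contrast, in the equigenerated case --- which is all this paper ever uses, since edge ideals are generated in degree $2$, and which is the setting of [HHZ, Theorem 3.2] --- your own bookkeeping finishes the argument in two lines: by induction the cone construction shows that $S/I_{j-1}$ has a linear resolution, so $G^{(j-1)}_i$ is generated in internal degree $d+i-1$ for $i\geq 1$, while $P_i$ is generated in degree $d+i$; hence every entry of every $\psi_i$ has positive degree and the cone is automatically minimal (and again linear, keeping the induction going). You had all the pieces for this --- you note that the source $\Tor_i$ sits in degree $d_j+i$ --- but you never make the comparison with the target degrees, and that comparison is the one step separating your outline from a complete proof.
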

The following result is now straightforward:
\begin{Corollary}
	Let $T$ be a (partially) whiskered star graph and $I\left(T^2\right)$ the edge ideal of its square. Then $\projdim(I\left(T^2\right))=M-2$, where $M$ is the number of vertices of $T$.
\end{Corollary}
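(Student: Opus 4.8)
The plan is to apply the displayed formula $\projdim(I)=\max\{r_1,\ldots,r_N\}$ from the Proposition of \cite{HT}, valid for any ideal with linear quotients, where $r_k=|\set(u_k)|$ counts the variables generating the colon ideal $\langle u_1,\ldots,u_{k-1}\rangle:\langle u_k\rangle$. By the preceding Theorem, $I(T^2)$ has linear quotients with respect to the chosen revlex order, and Proposition~\ref{wstar} already exhibits every such colon ideal explicitly. Thus the whole problem reduces to reading off $r_i$ from the three cases of Proposition~\ref{wstar} and maximizing.

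First I would translate each of the three cases into a value of $r_i$. If $u_i=x_ax_b$ with $0\le a<b\le n$, then the colon ideal $\langle x_0,\ldots,\hat{x}_a,\ldots,x_{b-1}\rangle$ is generated by the $b$ variables of index ${}<b$ with the index $a$ deleted, so $r_i=b-1\le n-1$. If $u_i=x_0y_b$ with $1\le b\le m$, then $\langle x_1,\ldots,x_n,y_1,\ldots,y_{b-1}\rangle$ has $n+(b-1)$ generators, so $r_i=n+b-1\le n+m-1$. If $u_i=x_ay_a$ with $1\le a\le m$, then $\langle x_0,\ldots,\hat{x}_a,\ldots,x_n\rangle$ has $n$ generators, so $r_i=n$.

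Next I would take the maximum of the three bounds, namely $\max\{n-1,\,n+m-1,\,n\}$. Since $1\le m$, we have $n+m-1\ge n>n-1$, so this maximum equals $n+m-1$, and it is attained by the generator $u=x_0y_m$ belonging to the second family. Finally, counting vertices gives $M=1+n+m$, hence $n+m-1=M-2$, and therefore $\projdim(I(T^2))=M-2$, as claimed.

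There is no substantial obstacle here, since the essential work was done in Proposition~\ref{wstar}; the only points demanding care are the off-by-one in counting the generators of each colon ideal (an index is deleted in the first and third families, but not in the second), and checking that the extremal generator $x_0y_m$ really occurs among $u_2,\ldots,u_N$, so that the explicit formula of Proposition~\ref{wstar} applies to it. The latter is automatic because $r_1=0<n+m-1$, forcing the maximizing generator to have index $i\ge2$.
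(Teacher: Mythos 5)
Your proposal is correct and follows exactly the paper's route: apply the formula $\projdim(I)=\max\{r_1,\ldots,r_N\}$ for ideals with linear quotients and read off the $r_i$ from the three cases of Proposition~\ref{wstar}. In fact your case analysis is more careful than the paper's own one-line argument: the paper asserts that the maximum is attained at the revlex-last generator $u_N=x_my_m$ with $r_N=n+m-1$, but by case~3 of Proposition~\ref{wstar} that generator has colon ideal $\langle x_0,\ldots,\hat{x}_m,\ldots,x_n\rangle$ and hence $r_N=n$; the true maximizer is $x_0y_m$ from case~2, exactly as you identify, with $r=n+m-1$. Since the final value $n+m-1=M-2$ is the same, the paper's conclusion stands, and your write-up silently repairs this slip; the only cosmetic remark is that your last paragraph's worry about whether $x_0y_m$ occurs among $u_2,\ldots,u_N$ is unnecessary, since any edge ideal of a graph with at least two edges has $N\geq 2$ and the formula of Proposition~\ref{wstar} covers every generator of index $i\geq 2$.
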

\begin{proof} Taking into account Proposition~\ref{wstar}, the maximum of the set $\{r_1,\ldots,r_N\}$ is achieved for the monomial $u_N=x_my_m$ for which $r_N=n+m-1$. Note that $T$ is a graph on $M=n+m+1$ vertices.  
\end{proof}

In order to characterize all the trees such that the edge ideal of their square has a linear resolution, we will use the combinatorial characterization of an edge ideal with a linear resolution given by Fr\"oberg in \cite{F}.
\begin{Theorem}\cite{F} \label{Froberg} Let $G$ be a finite simple graph. The edge ideal $I(G)$ has a linear resolution if and only if $G$ is a co-chordal graph.
\end{Theorem}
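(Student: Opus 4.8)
The plan is to pass from the free resolution of $I(G)$ to the topology of a simplicial complex via Hochster's formula, and then to match homological vanishing with chordality. First I would record that $I(G)$ is the Stanley--Reisner ideal of the independence complex $\Delta=\Ind(G)$, whose faces are the independent sets of $G$; since independent sets of $G$ are exactly the cliques of $G^c$, the complex $\Delta$ is the clique (flag) complex of $G^c$, and for $W\subseteq V(G)$ its restriction $\Delta_W$ is the clique complex of $(G^c)_W=(G_W)^c$. Hochster's formula \cite{HeHi} computes the graded Betti numbers of $S/I(G)$ as
\[\beta_{i,j}\bigl(S/I(G)\bigr)=\sum_{\substack{W\subseteq V(G)\\ |W|=j}}\dim_{\kk}\widetilde{H}_{\,j-i-1}(\Delta_W;\kk).\]
Because $I(G)$ is generated in degree $2$, it has a linear resolution precisely when $\beta_{i,j}\bigl(S/I(G)\bigr)=0$ for all $i\geq1$ and all $j\neq i+1$. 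Since every summand on the right is nonnegative, so that no cancellation can occur, the formula shows that $I(G)$ has a linear resolution if and only if $\widetilde{H}_{k}(\Delta_W;\kk)=0$ for every $W$ and every $k\neq0$; that is, if and only if every induced subcomplex of the clique complex of $G^c$ has reduced homology concentrated in degree $0$.

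The easy direction is the contrapositive of ``$G^c$ chordal''. If $G^c$ is not chordal it contains an induced cycle of length $n\geq4$ on a vertex set $W$; then $(G^c)_W=C_n$ has no triangles, so $\Delta_W$ is the $1$-dimensional cycle, homeomorphic to $S^1$, and hence $\widetilde{H}_1(\Delta_W;\kk)=\kk\neq0$. Taking $j=n$ and $i=n-2$ produces a nonzero contribution to $\beta_{n-2,\,n}$ by Hochster's formula, and since $n\neq(n-2)+1$ the resolution fails to be linear.

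For the converse I would isolate the key lemma that for any chordal graph $H$ the clique complex $\Delta(H)$ has reduced homology concentrated in degree $0$; applied to $H=(G^c)_W$, which is an induced subgraph of the chordal graph $G^c$ and hence itself chordal, this yields exactly the vanishing needed above. The argument is by induction on the number of vertices of $H$ using a simplicial vertex $v$, which exists by a perfect elimination ordering. Since $N(v)$ is a clique, the closed star of $v$ is the full simplex on $N[v]$, and writing $\Delta(H)=\Delta(H-v)\cup\overline{\st}(v)$ the intersection $\Delta(H-v)\cap\overline{\st}(v)$ is the full simplex on $N(v)$; both the star and this intersection are contractible, while the degenerate case $N(v)=\emptyset$ merely adds an isolated component and leaves higher homology untouched. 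A Mayer--Vietoris comparison then gives $\widetilde{H}_k(\Delta(H))\cong\widetilde{H}_k(\Delta(H-v))$ for all $k\geq1$, and the inductive hypothesis closes the step.

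The main obstacle is precisely this last lemma: one must control the homotopy type of the clique complexes of chordal graphs \emph{uniformly over all induced subgraphs}, and the simplicial-vertex induction is what makes the gluing in the Mayer--Vietoris sequence collapse to a degree-shift isomorphism. Everything else --- the Stanley--Reisner translation, the invocation of Hochster's formula, and the induced-cycle computation --- is bookkeeping once this homological heart is in place.
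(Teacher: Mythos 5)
Your proof is correct, but note that there is nothing in the paper to compare it against: the paper states this result as Fr\"oberg's theorem, citing \cite{F}, and uses it as a black box (via Theorem~\ref{Froberg} and Remark~\ref{gapfree}) in the characterization of trees whose squares are co-chordal. What you have produced is a self-contained proof of the cited result, and it follows the standard route, essentially Fr\"oberg's own: identify $I(G)$ with the Stanley--Reisner ideal of the clique complex of $G^c$, apply Hochster's formula, reduce linearity of the resolution to the vanishing of $\widetilde{H}_k(\Delta_W;\kk)$ for $k\geq 1$ over all induced vertex subsets $W$, detect a non-chordal $G^c$ through an induced cycle $C_n$, $n\geq 4$, whose clique complex is a circle, and conversely kill the higher homology of clique complexes of chordal graphs by induction on a simplicial vertex together with Mayer--Vietoris. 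Two points deserve credit or a touch more care. First, your treatment of the degenerate case $N(v)=\emptyset$, where the reduced Mayer--Vietoris sequence does not directly apply, is exactly the step most often glossed over, and you handled it correctly. Second, the equivalence ``linear resolution iff $\widetilde{H}_k(\Delta_W;\kk)=0$ for all $W$ and all $k\neq 0$'' involves a small amount of bookkeeping you left implicit: for a fixed $W$ and $k\geq 1$, the corresponding Betti number is $\beta_{i,j}$ with $j=|W|$ and $i=|W|-k-1$, and this pair only lies in the range $i\geq 1$ when $|W|\geq k+2$; one should note separately that $\widetilde{H}_k(\Delta_W;\kk)=0$ holds automatically when $|W|\leq k+1$ (a complex on $k+1$ vertices with a $k$-face is the full simplex, hence contractible), and likewise that the case $k=-1$ never contributes because every singleton is a face. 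With those remarks added, the argument is complete and valid over an arbitrary field, which also recovers the field-independence implicit in Fr\"oberg's criterion.
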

The following remark is a direct consequence of Theorem~\ref{Froberg} and it will be intensively used through the paper:
\begin{Remark}\label{gapfree}\rm
	If $G$ is a graph such that its edge ideal has a linear resolution, then $G$ is gap-free.
\end{Remark}

Even if one would expect that the property of having a linear resolution is preserved by the squares, the following example shows that this is not true:

\begin{Example}\label{example}\rm 
	Let $T$ be the tree from the figure below and $T^2$ its square.
	\newline
	\begin{figure}[h]
		\ifx\du\undefined
		\newlength{\du}
		\fi
		\setlength{\du}{7\unitlength}
		\begin{tikzpicture}[even odd rule]
		\pgftransformxscale{1.000000}
		\pgftransformyscale{-1.000000}
		\definecolor{dialinecolor}{rgb}{0.000000, 0.000000, 0.000000}
		\pgfsetstrokecolor{dialinecolor}
		\pgfsetstrokeopacity{1.000000}
		\definecolor{diafillcolor}{rgb}{1.000000, 1.000000, 1.000000}
		\pgfsetfillcolor{diafillcolor}
		\pgfsetfillopacity{1.000000}
		\pgfsetlinewidth{0.0500000\du}
		\pgfsetdash{}{0pt}
		\pgfsetbuttcap
		{
			\definecolor{diafillcolor}{rgb}{0.000000, 0.000000, 0.000000}
			\pgfsetfillcolor{diafillcolor}
			\pgfsetfillopacity{1.000000}
			\definecolor{dialinecolor}{rgb}{0.000000, 0.000000, 0.000000}
			\pgfsetstrokecolor{dialinecolor}
			\pgfsetstrokeopacity{1.000000}
			\draw (2.000000\du,2.000000\du)--(7.000000\du,5.000000\du);
		}
		\definecolor{dialinecolor}{rgb}{0.000000, 0.000000, 0.000000}
		\pgfsetstrokecolor{dialinecolor}
		\pgfsetstrokeopacity{1.000000}
		\draw (2.000000\du,2.000000\du)--(7.000000\du,5.000000\du);
		\pgfsetlinewidth{0.0500000\du}
		\pgfsetdash{}{0pt}
		\pgfsetmiterjoin
		\pgfsetbuttcap
		\definecolor{diafillcolor}{rgb}{0.000000, 0.000000, 0.000000}
		\pgfsetfillcolor{diafillcolor}
		\pgfsetfillopacity{1.000000}
		\definecolor{dialinecolor}{rgb}{0.000000, 0.000000, 0.000000}
		\pgfsetstrokecolor{dialinecolor}
		\pgfsetstrokeopacity{1.000000}
		\pgfpathmoveto{\pgfpoint{2.000000\du}{2.000000\du}}
		\pgfpathcurveto{\pgfpoint{2.064312\du}{1.892813\du}}{\pgfpoint{2.235811\du}{1.849939\du}}{\pgfpoint{2.342997\du}{1.914251\du}}
		\pgfpathcurveto{\pgfpoint{2.450184\du}{1.978563\du}}{\pgfpoint{2.493058\du}{2.150061\du}}{\pgfpoint{2.428746\du}{2.257248\du}}
		\pgfpathcurveto{\pgfpoint{2.364434\du}{2.364434\du}}{\pgfpoint{2.192936\du}{2.407309\du}}{\pgfpoint{2.085749\du}{2.342997\du}}
		\pgfpathcurveto{\pgfpoint{1.978563\du}{2.278685\du}}{\pgfpoint{1.935688\du}{2.107187\du}}{\pgfpoint{2.000000\du}{2.000000\du}}
		\pgfpathclose
		\pgfusepath{fill,stroke}
		\pgfsetlinewidth{0.0500000\du}
		\pgfsetdash{}{0pt}
		\pgfsetbuttcap
		{
			\definecolor{diafillcolor}{rgb}{0.000000, 0.000000, 0.000000}
			\pgfsetfillcolor{diafillcolor}
			\pgfsetfillopacity{1.000000}
			\definecolor{dialinecolor}{rgb}{0.000000, 0.000000, 0.000000}
			\pgfsetstrokecolor{dialinecolor}
			\pgfsetstrokeopacity{1.000000}
			\draw (2.000000\du,8.000000\du)--(7.000000\du,5.000000\du);
		}
		\definecolor{dialinecolor}{rgb}{0.000000, 0.000000, 0.000000}
		\pgfsetstrokecolor{dialinecolor}
		\pgfsetstrokeopacity{1.000000}
		\draw (2.000000\du,8.000000\du)--(7.000000\du,5.000000\du);
		\pgfsetlinewidth{0.0500000\du}
		\pgfsetdash{}{0pt}
		\pgfsetmiterjoin
		\pgfsetbuttcap
		\definecolor{diafillcolor}{rgb}{0.000000, 0.000000, 0.000000}
		\pgfsetfillcolor{diafillcolor}
		\pgfsetfillopacity{1.000000}
		\definecolor{dialinecolor}{rgb}{0.000000, 0.000000, 0.000000}
		\pgfsetstrokecolor{dialinecolor}
		\pgfsetstrokeopacity{1.000000}
		\pgfpathmoveto{\pgfpoint{2.000000\du}{8.000000\du}}
		\pgfpathcurveto{\pgfpoint{1.935688\du}{7.892813\du}}{\pgfpoint{1.978563\du}{7.721315\du}}{\pgfpoint{2.085749\du}{7.657003\du}}
		\pgfpathcurveto{\pgfpoint{2.192936\du}{7.592691\du}}{\pgfpoint{2.364434\du}{7.635566\du}}{\pgfpoint{2.428746\du}{7.742752\du}}
		\pgfpathcurveto{\pgfpoint{2.493058\du}{7.849939\du}}{\pgfpoint{2.450184\du}{8.021437\du}}{\pgfpoint{2.342997\du}{8.085749\du}}
		\pgfpathcurveto{\pgfpoint{2.235811\du}{8.150061\du}}{\pgfpoint{2.064312\du}{8.107187\du}}{\pgfpoint{2.000000\du}{8.000000\du}}
		\pgfpathclose
		\pgfusepath{fill,stroke}
		\pgfsetlinewidth{0.0500000\du}
		\pgfsetdash{}{0pt}
		\pgfsetbuttcap
		{
			\definecolor{diafillcolor}{rgb}{0.000000, 0.000000, 0.000000}
			\pgfsetfillcolor{diafillcolor}
			\pgfsetfillopacity{1.000000}
			\definecolor{dialinecolor}{rgb}{0.000000, 0.000000, 0.000000}
			\pgfsetstrokecolor{dialinecolor}
			\pgfsetstrokeopacity{1.000000}
			\draw (7.000000\du,5.000000\du)--(13.000000\du,5.000000\du);
		}
		\definecolor{dialinecolor}{rgb}{0.000000, 0.000000, 0.000000}
		\pgfsetstrokecolor{dialinecolor}
		\pgfsetstrokeopacity{1.000000}
		\draw (7.000000\du,5.000000\du)--(13.000000\du,5.000000\du);
		\pgfsetlinewidth{0.0500000\du}
		\pgfsetdash{}{0pt}
		\pgfsetmiterjoin
		\pgfsetbuttcap
		\definecolor{diafillcolor}{rgb}{0.000000, 0.000000, 0.000000}
		\pgfsetfillcolor{diafillcolor}
		\pgfsetfillopacity{1.000000}
		\definecolor{dialinecolor}{rgb}{0.000000, 0.000000, 0.000000}
		\pgfsetstrokecolor{dialinecolor}
		\pgfsetstrokeopacity{1.000000}
		\pgfpathmoveto{\pgfpoint{7.000000\du}{5.000000\du}}
		\pgfpathcurveto{\pgfpoint{7.000000\du}{4.875000\du}}{\pgfpoint{7.125000\du}{4.750000\du}}{\pgfpoint{7.250000\du}{4.750000\du}}
		\pgfpathcurveto{\pgfpoint{7.375000\du}{4.750000\du}}{\pgfpoint{7.500000\du}{4.875000\du}}{\pgfpoint{7.500000\du}{5.000000\du}}
		\pgfpathcurveto{\pgfpoint{7.500000\du}{5.125000\du}}{\pgfpoint{7.375000\du}{5.250000\du}}{\pgfpoint{7.250000\du}{5.250000\du}}
		\pgfpathcurveto{\pgfpoint{7.125000\du}{5.250000\du}}{\pgfpoint{7.000000\du}{5.125000\du}}{\pgfpoint{7.000000\du}{5.000000\du}}
		\pgfpathclose
		\pgfusepath{fill,stroke}
		\pgfsetlinewidth{0.0500000\du}
		\pgfsetdash{}{0pt}
		\pgfsetbuttcap
		{
			\definecolor{diafillcolor}{rgb}{0.000000, 0.000000, 0.000000}
			\pgfsetfillcolor{diafillcolor}
			\pgfsetfillopacity{1.000000}
		}
		\definecolor{dialinecolor}{rgb}{0.000000, 0.000000, 0.000000}
		\pgfsetstrokecolor{dialinecolor}
		\pgfsetstrokeopacity{1.000000}
		\draw (13.000000\du,5.000000\du)--(18.000000\du,2.000000\du);
		\pgfsetlinewidth{0.0500000\du}
		\pgfsetdash{}{0pt}
		\pgfsetmiterjoin
		\pgfsetbuttcap
		\definecolor{diafillcolor}{rgb}{0.000000, 0.000000, 0.000000}
		\pgfsetfillcolor{diafillcolor}
		\pgfsetfillopacity{1.000000}
		\definecolor{dialinecolor}{rgb}{0.000000, 0.000000, 0.000000}
		\pgfsetstrokecolor{dialinecolor}
		\pgfsetstrokeopacity{1.000000}
		\pgfpathmoveto{\pgfpoint{13.000000\du}{5.000000\du}}
		\pgfpathcurveto{\pgfpoint{12.935688\du}{4.892813\du}}{\pgfpoint{12.978563\du}{4.721315\du}}{\pgfpoint{13.085749\du}{4.657003\du}}
		\pgfpathcurveto{\pgfpoint{13.192936\du}{4.592691\du}}{\pgfpoint{13.364434\du}{4.635566\du}}{\pgfpoint{13.428746\du}{4.742752\du}}
		\pgfpathcurveto{\pgfpoint{13.493058\du}{4.849939\du}}{\pgfpoint{13.450184\du}{5.021437\du}}{\pgfpoint{13.342997\du}{5.085749\du}}
		\pgfpathcurveto{\pgfpoint{13.235811\du}{5.150061\du}}{\pgfpoint{13.064312\du}{5.107187\du}}{\pgfpoint{13.000000\du}{5.000000\du}}
		\pgfpathclose
		\pgfusepath{fill,stroke}
		\pgfsetlinewidth{0.0500000\du}
		\pgfsetdash{}{0pt}
		\pgfsetmiterjoin
		\pgfsetbuttcap
		\definecolor{diafillcolor}{rgb}{0.000000, 0.000000, 0.000000}
		\pgfsetfillcolor{diafillcolor}
		\pgfsetfillopacity{1.000000}
		\definecolor{dialinecolor}{rgb}{0.000000, 0.000000, 0.000000}
		\pgfsetstrokecolor{dialinecolor}
		\pgfsetstrokeopacity{1.000000}
		\pgfpathmoveto{\pgfpoint{18.000000\du}{2.000000\du}}
		\pgfpathcurveto{\pgfpoint{18.064312\du}{2.107187\du}}{\pgfpoint{18.021437\du}{2.278685\du}}{\pgfpoint{17.914251\du}{2.342997\du}}
		\pgfpathcurveto{\pgfpoint{17.807064\du}{2.407309\du}}{\pgfpoint{17.635566\du}{2.364434\du}}{\pgfpoint{17.571254\du}{2.257248\du}}
		\pgfpathcurveto{\pgfpoint{17.506942\du}{2.150061\du}}{\pgfpoint{17.549816\du}{1.978563\du}}{\pgfpoint{17.657003\du}{1.914251\du}}
		\pgfpathcurveto{\pgfpoint{17.764189\du}{1.849939\du}}{\pgfpoint{17.935688\du}{1.892813\du}}{\pgfpoint{18.000000\du}{2.000000\du}}
		\pgfpathclose
		\pgfusepath{fill,stroke}
		\pgfsetlinewidth{0.0500000\du}
		\pgfsetdash{}{0pt}
		\pgfsetbuttcap
		{
			\definecolor{diafillcolor}{rgb}{0.000000, 0.000000, 0.000000}
			\pgfsetfillcolor{diafillcolor}
			\pgfsetfillopacity{1.000000}
			\definecolor{dialinecolor}{rgb}{0.000000, 0.000000, 0.000000}
			\pgfsetstrokecolor{dialinecolor}
			\pgfsetstrokeopacity{1.000000}
			\draw (13.000000\du,5.000000\du)--(18.000000\du,8.000000\du);
		}
		\definecolor{dialinecolor}{rgb}{0.000000, 0.000000, 0.000000}
		\pgfsetstrokecolor{dialinecolor}
		\pgfsetstrokeopacity{1.000000}
		\draw (13.000000\du,5.000000\du)--(18.000000\du,8.000000\du);
		\pgfsetlinewidth{0.0500000\du}
		\pgfsetdash{}{0pt}
		\pgfsetmiterjoin
		\pgfsetbuttcap
		\definecolor{diafillcolor}{rgb}{0.000000, 0.000000, 0.000000}
		\pgfsetfillcolor{diafillcolor}
		\pgfsetfillopacity{1.000000}
		\definecolor{dialinecolor}{rgb}{0.000000, 0.000000, 0.000000}
		\pgfsetstrokecolor{dialinecolor}
		\pgfsetstrokeopacity{1.000000}
		\pgfpathmoveto{\pgfpoint{18.000000\du}{8.000000\du}}
		\pgfpathcurveto{\pgfpoint{17.935688\du}{8.107187\du}}{\pgfpoint{17.764189\du}{8.150061\du}}{\pgfpoint{17.657003\du}{8.085749\du}}
		\pgfpathcurveto{\pgfpoint{17.549816\du}{8.021437\du}}{\pgfpoint{17.506942\du}{7.849939\du}}{\pgfpoint{17.571254\du}{7.742752\du}}
		\pgfpathcurveto{\pgfpoint{17.635566\du}{7.635566\du}}{\pgfpoint{17.807064\du}{7.592691\du}}{\pgfpoint{17.914251\du}{7.657003\du}}
		\pgfpathcurveto{\pgfpoint{18.021437\du}{7.721315\du}}{\pgfpoint{18.064312\du}{7.892813\du}}{\pgfpoint{18.000000\du}{8.000000\du}}
		\pgfpathclose
		\pgfusepath{fill,stroke}
		\node[anchor=west] at (-3\du,5\du){$T:$};
		\node[anchor=west] at (0\du,8\du){$1$};
		\node[anchor=west] at (0\du,2\du){$2$};
		\node[anchor=west] at (18\du,8\du){$6$};
		\node[anchor=west] at (18\du,2\du){$5$};
		\node[anchor=west] at (4.7\du,5\du){$3$};
		\node[anchor=west] at (13.5\du,5\du){$4$};
		\end{tikzpicture}
		\ifx\du\undefined
		\newlength{\du}
		\fi
		\setlength{\du}{7\unitlength}
		\begin{tikzpicture}[even odd rule]
		\pgftransformxscale{1.000000}
		\pgftransformyscale{-1.000000}
		\definecolor{dialinecolor}{rgb}{0.000000, 0.000000, 0.000000}
		\pgfsetstrokecolor{dialinecolor}
		\pgfsetstrokeopacity{1.000000}
		\definecolor{diafillcolor}{rgb}{1.000000, 1.000000, 1.000000}
		\pgfsetfillcolor{diafillcolor}
		\pgfsetfillopacity{1.000000}
		\pgfsetlinewidth{0.0500000\du}
		\pgfsetdash{}{0pt}
		\pgfsetbuttcap
		{
			\definecolor{diafillcolor}{rgb}{0.000000, 0.000000, 0.000000}
			\pgfsetfillcolor{diafillcolor}
			\pgfsetfillopacity{1.000000}
			\definecolor{dialinecolor}{rgb}{0.000000, 0.000000, 0.000000}
			\pgfsetstrokecolor{dialinecolor}
			\pgfsetstrokeopacity{1.000000}
			\draw (2.000000\du,2.000000\du)--(7.000000\du,5.000000\du);
		}
		\definecolor{dialinecolor}{rgb}{0.000000, 0.000000, 0.000000}
		\pgfsetstrokecolor{dialinecolor}
		\pgfsetstrokeopacity{1.000000}
		\draw (2.000000\du,2.000000\du)--(7.000000\du,5.000000\du);
		\pgfsetlinewidth{0.0500000\du}
		\pgfsetdash{}{0pt}
		\pgfsetmiterjoin
		\pgfsetbuttcap
		\definecolor{diafillcolor}{rgb}{0.000000, 0.000000, 0.000000}
		\pgfsetfillcolor{diafillcolor}
		\pgfsetfillopacity{1.000000}
		\definecolor{dialinecolor}{rgb}{0.000000, 0.000000, 0.000000}
		\pgfsetstrokecolor{dialinecolor}
		\pgfsetstrokeopacity{1.000000}
		\pgfpathmoveto{\pgfpoint{2.000000\du}{2.000000\du}}
		\pgfpathcurveto{\pgfpoint{2.064312\du}{1.892813\du}}{\pgfpoint{2.235811\du}{1.849939\du}}{\pgfpoint{2.342997\du}{1.914251\du}}
		\pgfpathcurveto{\pgfpoint{2.450184\du}{1.978563\du}}{\pgfpoint{2.493058\du}{2.150061\du}}{\pgfpoint{2.428746\du}{2.257248\du}}
		\pgfpathcurveto{\pgfpoint{2.364434\du}{2.364434\du}}{\pgfpoint{2.192936\du}{2.407309\du}}{\pgfpoint{2.085749\du}{2.342997\du}}
		\pgfpathcurveto{\pgfpoint{1.978563\du}{2.278685\du}}{\pgfpoint{1.935688\du}{2.107187\du}}{\pgfpoint{2.000000\du}{2.000000\du}}
		\pgfpathclose
		\pgfusepath{fill,stroke}
		\pgfsetlinewidth{0.0500000\du}
		\pgfsetdash{}{0pt}
		\pgfsetbuttcap
		{
			\definecolor{diafillcolor}{rgb}{0.000000, 0.000000, 0.000000}
			\pgfsetfillcolor{diafillcolor}
			\pgfsetfillopacity{1.000000}
			\definecolor{dialinecolor}{rgb}{0.000000, 0.000000, 0.000000}
			\pgfsetstrokecolor{dialinecolor}
			\pgfsetstrokeopacity{1.000000}
			\draw (2.000000\du,8.000000\du)--(7.000000\du,5.000000\du);
		}
		\definecolor{dialinecolor}{rgb}{0.000000, 0.000000, 0.000000}
		\pgfsetstrokecolor{dialinecolor}
		\pgfsetstrokeopacity{1.000000}
		\draw (2.000000\du,8.000000\du)--(7.000000\du,5.000000\du);
		\pgfsetlinewidth{0.0500000\du}
		\pgfsetdash{}{0pt}
		\pgfsetmiterjoin
		\pgfsetbuttcap
		\definecolor{diafillcolor}{rgb}{0.000000, 0.000000, 0.000000}
		\pgfsetfillcolor{diafillcolor}
		\pgfsetfillopacity{1.000000}
		\definecolor{dialinecolor}{rgb}{0.000000, 0.000000, 0.000000}
		\pgfsetstrokecolor{dialinecolor}
		\pgfsetstrokeopacity{1.000000}
		\pgfpathmoveto{\pgfpoint{2.000000\du}{8.000000\du}}
		\pgfpathcurveto{\pgfpoint{1.935688\du}{7.892813\du}}{\pgfpoint{1.978563\du}{7.721315\du}}{\pgfpoint{2.085749\du}{7.657003\du}}
		\pgfpathcurveto{\pgfpoint{2.192936\du}{7.592691\du}}{\pgfpoint{2.364434\du}{7.635566\du}}{\pgfpoint{2.428746\du}{7.742752\du}}
		\pgfpathcurveto{\pgfpoint{2.493058\du}{7.849939\du}}{\pgfpoint{2.450184\du}{8.021437\du}}{\pgfpoint{2.342997\du}{8.085749\du}}
		\pgfpathcurveto{\pgfpoint{2.235811\du}{8.150061\du}}{\pgfpoint{2.064312\du}{8.107187\du}}{\pgfpoint{2.000000\du}{8.000000\du}}
		\pgfpathclose
		\pgfusepath{fill,stroke}
		\pgfsetlinewidth{0.0500000\du}
		\pgfsetdash{}{0pt}
		\pgfsetbuttcap
		{
			\definecolor{diafillcolor}{rgb}{0.000000, 0.000000, 0.000000}
			\pgfsetfillcolor{diafillcolor}
			\pgfsetfillopacity{1.000000}
			\definecolor{dialinecolor}{rgb}{0.000000, 0.000000, 0.000000}
			\pgfsetstrokecolor{dialinecolor}
			\pgfsetstrokeopacity{1.000000}
			\draw (7.000000\du,5.000000\du)--(13.000000\du,5.000000\du);
		}
		\definecolor{dialinecolor}{rgb}{0.000000, 0.000000, 0.000000}
		\pgfsetstrokecolor{dialinecolor}
		\pgfsetstrokeopacity{1.000000}
		\draw (7.000000\du,5.000000\du)--(13.000000\du,5.000000\du);
		\pgfsetlinewidth{0.0500000\du}
		\pgfsetdash{}{0pt}
		\pgfsetmiterjoin
		\pgfsetbuttcap
		\definecolor{diafillcolor}{rgb}{0.000000, 0.000000, 0.000000}
		\pgfsetfillcolor{diafillcolor}
		\pgfsetfillopacity{1.000000}
		\definecolor{dialinecolor}{rgb}{0.000000, 0.000000, 0.000000}
		\pgfsetstrokecolor{dialinecolor}
		\pgfsetstrokeopacity{1.000000}
		\pgfpathmoveto{\pgfpoint{7.000000\du}{5.000000\du}}
		\pgfpathcurveto{\pgfpoint{7.000000\du}{4.875000\du}}{\pgfpoint{7.125000\du}{4.750000\du}}{\pgfpoint{7.250000\du}{4.750000\du}}
		\pgfpathcurveto{\pgfpoint{7.375000\du}{4.750000\du}}{\pgfpoint{7.500000\du}{4.875000\du}}{\pgfpoint{7.500000\du}{5.000000\du}}
		\pgfpathcurveto{\pgfpoint{7.500000\du}{5.125000\du}}{\pgfpoint{7.375000\du}{5.250000\du}}{\pgfpoint{7.250000\du}{5.250000\du}}
		\pgfpathcurveto{\pgfpoint{7.125000\du}{5.250000\du}}{\pgfpoint{7.000000\du}{5.125000\du}}{\pgfpoint{7.000000\du}{5.000000\du}}
		\pgfpathclose
		\pgfusepath{fill,stroke}
		\pgfsetlinewidth{0.0500000\du}
		\pgfsetdash{}{0pt}
		\pgfsetbuttcap
		{
			\definecolor{diafillcolor}{rgb}{0.000000, 0.000000, 0.000000}
			\pgfsetfillcolor{diafillcolor}
			\pgfsetfillopacity{1.000000}
		}
		\definecolor{dialinecolor}{rgb}{0.000000, 0.000000, 0.000000}
		\pgfsetstrokecolor{dialinecolor}
		\pgfsetstrokeopacity{1.000000}
		\draw (13.000000\du,5.000000\du)--(18.000000\du,2.000000\du);
		\pgfsetlinewidth{0.0500000\du}
		\pgfsetdash{}{0pt}
		\pgfsetmiterjoin
		\pgfsetbuttcap
		\definecolor{diafillcolor}{rgb}{0.000000, 0.000000, 0.000000}
		\pgfsetfillcolor{diafillcolor}
		\pgfsetfillopacity{1.000000}
		\definecolor{dialinecolor}{rgb}{0.000000, 0.000000, 0.000000}
		\pgfsetstrokecolor{dialinecolor}
		\pgfsetstrokeopacity{1.000000}
		\pgfpathmoveto{\pgfpoint{13.000000\du}{5.000000\du}}
		\pgfpathcurveto{\pgfpoint{12.935688\du}{4.892813\du}}{\pgfpoint{12.978563\du}{4.721315\du}}{\pgfpoint{13.085749\du}{4.657003\du}}
		\pgfpathcurveto{\pgfpoint{13.192936\du}{4.592691\du}}{\pgfpoint{13.364434\du}{4.635566\du}}{\pgfpoint{13.428746\du}{4.742752\du}}
		\pgfpathcurveto{\pgfpoint{13.493058\du}{4.849939\du}}{\pgfpoint{13.450184\du}{5.021437\du}}{\pgfpoint{13.342997\du}{5.085749\du}}
		\pgfpathcurveto{\pgfpoint{13.235811\du}{5.150061\du}}{\pgfpoint{13.064312\du}{5.107187\du}}{\pgfpoint{13.000000\du}{5.000000\du}}
		\pgfpathclose
		\pgfusepath{fill,stroke}
		\pgfsetlinewidth{0.0500000\du}
		\pgfsetdash{}{0pt}
		\pgfsetmiterjoin
		\pgfsetbuttcap
		\definecolor{diafillcolor}{rgb}{0.000000, 0.000000, 0.000000}
		\pgfsetfillcolor{diafillcolor}
		\pgfsetfillopacity{1.000000}
		\definecolor{dialinecolor}{rgb}{0.000000, 0.000000, 0.000000}
		\pgfsetstrokecolor{dialinecolor}
		\pgfsetstrokeopacity{1.000000}
		\pgfpathmoveto{\pgfpoint{18.000000\du}{2.000000\du}}
		\pgfpathcurveto{\pgfpoint{18.064312\du}{2.107187\du}}{\pgfpoint{18.021437\du}{2.278685\du}}{\pgfpoint{17.914251\du}{2.342997\du}}
		\pgfpathcurveto{\pgfpoint{17.807064\du}{2.407309\du}}{\pgfpoint{17.635566\du}{2.364434\du}}{\pgfpoint{17.571254\du}{2.257248\du}}
		\pgfpathcurveto{\pgfpoint{17.506942\du}{2.150061\du}}{\pgfpoint{17.549816\du}{1.978563\du}}{\pgfpoint{17.657003\du}{1.914251\du}}
		\pgfpathcurveto{\pgfpoint{17.764189\du}{1.849939\du}}{\pgfpoint{17.935688\du}{1.892813\du}}{\pgfpoint{18.000000\du}{2.000000\du}}
		\pgfpathclose
		\pgfusepath{fill,stroke}
		\pgfsetlinewidth{0.0500000\du}
		\pgfsetdash{}{0pt}
		\pgfsetbuttcap
		{
			\definecolor{diafillcolor}{rgb}{0.000000, 0.000000, 0.000000}
			\pgfsetfillcolor{diafillcolor}
			\pgfsetfillopacity{1.000000}
			\definecolor{dialinecolor}{rgb}{0.000000, 0.000000, 0.000000}
			\pgfsetstrokecolor{dialinecolor}
			\pgfsetstrokeopacity{1.000000}
			\draw (13.000000\du,5.000000\du)--(18.000000\du,8.000000\du);
		}
		\definecolor{dialinecolor}{rgb}{0.000000, 0.000000, 0.000000}
		\pgfsetstrokecolor{dialinecolor}
		\pgfsetstrokeopacity{1.000000}
		\draw (13.000000\du,5.000000\du)--(18.000000\du,8.000000\du);
		\pgfsetlinewidth{0.0500000\du}
		\pgfsetdash{}{0pt}
		\pgfsetmiterjoin
		\pgfsetbuttcap
		\definecolor{diafillcolor}{rgb}{0.000000, 0.000000, 0.000000}
		\pgfsetfillcolor{diafillcolor}
		\pgfsetfillopacity{1.000000}
		\definecolor{dialinecolor}{rgb}{0.000000, 0.000000, 0.000000}
		\pgfsetstrokecolor{dialinecolor}
		\pgfsetstrokeopacity{1.000000}
		\pgfpathmoveto{\pgfpoint{18.000000\du}{8.000000\du}}
		\pgfpathcurveto{\pgfpoint{17.935688\du}{8.107187\du}}{\pgfpoint{17.764189\du}{8.150061\du}}{\pgfpoint{17.657003\du}{8.085749\du}}
		\pgfpathcurveto{\pgfpoint{17.549816\du}{8.021437\du}}{\pgfpoint{17.506942\du}{7.849939\du}}{\pgfpoint{17.571254\du}{7.742752\du}}
		\pgfpathcurveto{\pgfpoint{17.635566\du}{7.635566\du}}{\pgfpoint{17.807064\du}{7.592691\du}}{\pgfpoint{17.914251\du}{7.657003\du}}
		\pgfpathcurveto{\pgfpoint{18.021437\du}{7.721315\du}}{\pgfpoint{18.064312\du}{7.892813\du}}{\pgfpoint{18.000000\du}{8.000000\du}}
		\pgfpathclose
		\pgfusepath{fill,stroke}
		\pgfsetlinewidth{0.0500000\du}
		\pgfsetdash{}{0pt}
		\pgfsetbuttcap
		{
			\definecolor{diafillcolor}{rgb}{0.000000, 0.000000, 0.000000}
			\pgfsetfillcolor{diafillcolor}
			\pgfsetfillopacity{1.000000}
			\definecolor{dialinecolor}{rgb}{0.000000, 0.000000, 0.000000}
			\pgfsetstrokecolor{dialinecolor}
			\pgfsetstrokeopacity{1.000000}
			\draw (2.000000\du,2.000000\du)--(13.000000\du,5.000000\du);
		}
		\pgfsetlinewidth{0.0500000\du}
		\pgfsetdash{}{0pt}
		\pgfsetbuttcap
		{
			\definecolor{diafillcolor}{rgb}{0.000000, 0.000000, 0.000000}
			\pgfsetfillcolor{diafillcolor}
			\pgfsetfillopacity{1.000000}
			\definecolor{dialinecolor}{rgb}{0.000000, 0.000000, 0.000000}
			\pgfsetstrokecolor{dialinecolor}
			\pgfsetstrokeopacity{1.000000}
			\draw (2.000000\du,2.000000\du)--(2.000000\du,8.000000\du);
		}
		\pgfsetlinewidth{0.0500000\du}
		\pgfsetdash{}{0pt}
		\pgfsetbuttcap
		{
			\definecolor{diafillcolor}{rgb}{0.000000, 0.000000, 0.000000}
			\pgfsetfillcolor{diafillcolor}
			\pgfsetfillopacity{1.000000}
			\definecolor{dialinecolor}{rgb}{0.000000, 0.000000, 0.000000}
			\pgfsetstrokecolor{dialinecolor}
			\pgfsetstrokeopacity{1.000000}
			\draw (2.000000\du,8.000000\du)--(13.000000\du,5.000000\du);
		}
		\pgfsetlinewidth{0.0500000\du}
		\pgfsetdash{}{0pt}
		\pgfsetbuttcap
		{
			\definecolor{diafillcolor}{rgb}{0.000000, 0.000000, 0.000000}
			\pgfsetfillcolor{diafillcolor}
			\pgfsetfillopacity{1.000000}
			\definecolor{dialinecolor}{rgb}{0.000000, 0.000000, 0.000000}
			\pgfsetstrokecolor{dialinecolor}
			\pgfsetstrokeopacity{1.000000}
			\draw (7.000000\du,5.000000\du)--(18.000000\du,2.000000\du);
		}
		\pgfsetlinewidth{0.0500000\du}
		\pgfsetdash{}{0pt}
		\pgfsetbuttcap
		{
			\definecolor{diafillcolor}{rgb}{0.000000, 0.000000, 0.000000}
			\pgfsetfillcolor{diafillcolor}
			\pgfsetfillopacity{1.000000}
			\definecolor{dialinecolor}{rgb}{0.000000, 0.000000, 0.000000}
			\pgfsetstrokecolor{dialinecolor}
			\pgfsetstrokeopacity{1.000000}
			\draw (18.000000\du,2.000000\du)--(18.000000\du,8.000000\du);
		}
		\pgfsetlinewidth{0.0500000\du}
		\pgfsetdash{}{0pt}
		\pgfsetbuttcap
		{
			\definecolor{diafillcolor}{rgb}{0.000000, 0.000000, 0.000000}
			\pgfsetfillcolor{diafillcolor}
			\pgfsetfillopacity{1.000000}
			\definecolor{dialinecolor}{rgb}{0.000000, 0.000000, 0.000000}
			\pgfsetstrokecolor{dialinecolor}
			\pgfsetstrokeopacity{1.000000}
			\draw (7.000000\du,5.000000\du)--(18.000000\du,8.000000\du);
		}
		\node[anchor=west] at (-3\du,5\du){$T^2:$};
		\node[anchor=west] at (0\du,8\du){$1$};
		\node[anchor=west] at (0\du,2\du){$2$};
		\node[anchor=west] at (18\du,8\du){$6$};
		\node[anchor=west] at (18\du,2\du){$5$};
		\node[anchor=west] at (4.7\du,5\du){$3$};
		\node[anchor=west] at (13.5\du,5\du){$4$};
		\end{tikzpicture}
		
	\end{figure}
	
	Let $S=\kk[x_1,\ldots,x_6]$ the polynomial over a field $\kk$. The edge ideals of $T$ and $T^2$ are $$I(T)=\langle x_1x_3,x_2x_3,x_3x_4,x_4x_5,x_4x_6\rangle$$ and $$I\left(T^2\right)=\langle x_1x_3,x_2x_3,x_3x_4,x_4x_5,x_4x_6,x_1x_2,x_1x_4,x_2x_4,x_5x_6,x_3x_5,x_3x_6\rangle.$$ It is a simple matter to check that $I(T)$ has a linear resolution (it is easy to see that it has linear quotients), but $I(T^2)$ does not have a linear resolution. Combinatorially, this can be justified by the fact that the edges $\{1,2\}$ and $\{5,6\}$ form a gap in $T^2$, therefore in $(T^2)^c$ one can find the induced cycle $\{1,6,2,5\}$ with the vertices in this order. Therefore $(T^2)^c$ is not chordal and the result follows by Fr\"oberg's criterion.  
\end{Example}

In order to get the characterization, we start with several conditions that have to be fulfilled by trees such that the edge ideal of their square has a linear resolution. The first result determines the maximal diameter that the tree can have.
\begin{Lemma}\label{diam}
	Let $T$ be a tree and $G=T^2$. If $I(G)$ has a linear resolution, then $\diam(T)\leq4$.
\end{Lemma}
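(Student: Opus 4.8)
The plan is to argue by contraposition, using Remark~\ref{gapfree} as the main tool: since the hypothesis that $I(G)$ has a linear resolution forces $G=T^2$ to be gap-free, it suffices to show that $\diam(T)\geq 5$ always produces an induced gap in $T^2$. So I would assume $\diam(T)\geq 5$ and derive that $T^2$ is not gap-free, which by Remark~\ref{gapfree} contradicts the existence of a linear resolution.

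First I would extract a long path from the diameter assumption. Picking two vertices of $T$ realizing a distance $\geq 5$ and taking a shortest path between them yields distinct vertices $v_0,v_1,\ldots,v_5$ with $\{v_i,v_{i+1}\}\in E(T)$ for $0\leq i\leq 4$. Because $T$ is a tree, the path joining any two of its vertices is unique, so the subpath of this geodesic joining $v_i$ and $v_j$ is the unique $T$-path between them; consequently $\dist_T(v_i,v_j)=|i-j|$ for all $i,j\in\{0,\ldots,5\}$. This exact-distance fact is the one point requiring a little care, and it is precisely where the tree hypothesis is used.

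Next I would exhibit the gap using the two edges $\{v_0,v_1\}$ and $\{v_4,v_5\}$ of $T^2$. Both are edges of $T^2$ since their endpoints are at distance $1$ in $T$. To see that they form an induced gap, I would check that none of the four cross-pairs is an edge of $T^2$: indeed $\dist_T(v_0,v_4)=4$, $\dist_T(v_0,v_5)=5$, $\dist_T(v_1,v_4)=3$, and $\dist_T(v_1,v_5)=4$ are all strictly larger than $2$, so no such pair lies in $E(T^2)$. Hence the four vertices $v_0,v_1,v_4,v_5$ induce a cycle of length $4$ in $(T^2)^c$, i.e.\ $\{v_0,v_1\}$ and $\{v_4,v_5\}$ form an induced gap in $T^2$.

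Finally I would conclude the argument. Having produced a gap, $T^2$ is not gap-free, so by Remark~\ref{gapfree} its edge ideal cannot have a linear resolution, contradicting the hypothesis on $I(G)$. Therefore $\diam(T)\leq 4$. I do not expect a genuine obstacle here: the whole proof reduces to a distance computation along a geodesic in a tree together with one invocation of gap-freeness, and the only subtlety is justifying that distances along the chosen path are exact, which follows immediately from uniqueness of paths in a tree.
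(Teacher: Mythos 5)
Your proposal is correct and follows essentially the same route as the paper: assume $\diam(T)\geq 5$, extract a six-vertex path in $T$, use uniqueness of paths in a tree to compute exact distances, conclude that the two end edges form an induced gap in $T^2$, and contradict gap-freeness (Remark~\ref{gapfree}). Your distance verification $\dist_T(v_0,v_4)=4$, $\dist_T(v_0,v_5)=5$, $\dist_T(v_1,v_4)=3$, $\dist_T(v_1,v_5)=4$ is in fact a cleaner and more explicit justification than the paper's appeal to a figure.
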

\begin{proof}Assume by contradiction that $\diam(T)\geq5$. Hence there exists an induced path of length $6$ in $T$, say $x,y,z,u,v,w$. Since $T$ is a tree, we may assume that $x$ is a free vertex. One may note that in $G$, the set of edges $\{x,y\}$ and $\{v,w\}$ form an induced gap. Indeed, this can be easily seen from the next figure and using the fact that $T$ is a tree, therefore between any two vertices there is a unique path. 
	\[\]
	\begin{center}
		\unitlength 1mm 
		\linethickness{0.4pt}
		\ifx\plotpoint\undefined\newsavebox{\plotpoint}\fi 
		\begin{picture}(74.5,24.25)(0,0)
		\put(10.5,10){\line(1,0){47.5}}
		\multiput(10.25,10)(.0381231672,.0337243402){341}{\line(1,0){.0381231672}}
		\multiput(57.75,10)(.0466360856,.0336391437){327}{\line(1,0){.0466360856}}
		\put(23,21){\line(1,0){49.75}}
		\multiput(22.75,21.25)(.0336363636,-.0409090909){275}{\line(0,-1){.0409090909}}
		\multiput(32,10)(.0496941896,.0336391437){327}{\line(1,0){.0496941896}}
		\multiput(48.25,21)(.0336700337,-.0361952862){297}{\line(0,-1){.0361952862}}
		\put(9,5){$x$}
		\put(31,5){$z$}
		\put(57,5){$v$}
		\put(22,23.25){$y$}
		\put(48,23.25){$u$}
		\put(73,23.25){$w$}
		\end{picture}
	\end{center}
	Also note that any neighbour of $z$ in $T$, excepting $u$, is at distance at least 3 by both $v$ and $w$ and
	any neighbour of $u$, excepting $z$, is at distance at least $3$ by both $x$ and $y$. So $\{x,y\}$ and $\{v,w\}$ form an induced gap in $G$. This implies that $I(G)$ does not have a linear resolution, a contradiction.	\end{proof}
We determine now some restrictions on the degrees of the vertices.
\begin{Lemma}\label{claim1}
	Let $T$ be a tree with $\diam(T)=3$ such that $I(T^2)$ has a linear resolution. Then $T$ can contain at most one vertex of degree at least $3$.
\end{Lemma}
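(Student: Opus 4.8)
The plan is to argue by contradiction, using the gap-freeness forced by Remark~\ref{gapfree}. First I would pin down the shape of a tree $T$ with $\diam(T)=3$. Fix a diametral path $v_0,v_1,v_2,v_3$ of length $3$; its endpoints $v_0,v_3$ must be free vertices, since otherwise the path could be extended and the diameter would exceed $3$. Any further vertex is attached to this path, and because $T$ is a tree of diameter $3$ it cannot hang off $v_0$ or $v_3$ (that would produce a path of length $4$) nor can it have a non-central neighbour of its own (same reason). Hence every vertex other than $v_1,v_2$ is a free vertex adjacent to $v_1$ or to $v_2$, so $T$ is a double star with adjacent centres $v_1,v_2$. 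In particular the only vertices whose degree can reach $3$ are $v_1$ and $v_2$, and $\deg(v_i)\ge 3$ holds precisely when $v_i$ carries at least two free neighbours.

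With this reduction the assertion becomes: $v_1$ and $v_2$ cannot both have degree $\ge 3$. So I would assume the contrary, that $v_1$ has free neighbours $a_1,a_2$ and $v_2$ has free neighbours $b_1,b_2$, and then produce a forbidden configuration in $G=T^2$. The only computation needed is the distance bookkeeping in $T$: one has $\dist_T(a_1,a_2)=2$ and $\dist_T(b_1,b_2)=2$ (through $v_1$, resp.\ $v_2$), while every cross pair satisfies $\dist_T(a_i,b_j)=3$ (through $v_1$ and $v_2$).

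Consequently, in $G=T^2$ the pairs $\{a_1,a_2\}$ and $\{b_1,b_2\}$ are edges, whereas none of $\{a_1,b_1\}$, $\{a_1,b_2\}$, $\{a_2,b_1\}$, $\{a_2,b_2\}$ is an edge. Passing to $G^c$, the four non-edges of $G$ become edges and the two edges $\{a_1,a_2\}$, $\{b_1,b_2\}$ become non-edges, so the vertices $a_1,b_1,a_2,b_2$ (in this cyclic order) span an induced cycle of length $4$ in $G^c$. By the very definition recalled in the preliminaries, $\{a_1,a_2\}$ and $\{b_1,b_2\}$ thus form an induced gap in $G$. Since Remark~\ref{gapfree} asserts that a graph whose edge ideal has a linear resolution is gap-free, this contradicts the hypothesis on $I(T^2)$. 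Hence at most one of $v_1,v_2$ --- and therefore at most one vertex of $T$ --- can have degree $\ge 3$.

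The substance of the argument is entirely in the structural description of diameter-$3$ trees together with this short distance count; both are elementary, so I do not anticipate a genuine obstacle. The one point I would state with care is the transition in the last paragraph: the absence of all four cross-edges is exactly the combinatorial meaning of an induced gap, which is what lets Remark~\ref{gapfree} apply verbatim.
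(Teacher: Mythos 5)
Your proof is correct and follows essentially the same route as the paper: assume two vertices of degree at least $3$, take two further neighbours of each, and show that the resulting pairs $\{a_1,a_2\}$ and $\{b_1,b_2\}$ form an induced gap in $T^2$, contradicting the gap-freeness forced by Remark~\ref{gapfree}. The only difference is presentational --- you make explicit the double-star structure of diameter-$3$ trees and verify the induced $4$-cycle in $(T^2)^c$ by direct distance bookkeeping, where the paper instead invokes the adjacency of cut-points together with Proposition~\ref{propT2}(d) and points to Example~\ref{example} for the gap.
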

\begin{proof} Since $I(T^2)$ has a linear resolution, by Theorem~\ref{Froberg}, $(T^2)^c$ is a chordal graph, in particular $T$ is gap-free. Since $\diam(T)=3$, any two cut-points are adjacent. By Proposition~\ref{propT2}(d) the neighbourhoods of any two cut-points form cliques that are intersecting in exactly two vertices. 
	
	Let $v$ be a vertex of $T$, with $\deg_T(v)\geq 3$. Assume by contradiction that there exists a vertex $u\neq v$ such that $\deg_T(u)\geq3$. Hence there are $\{v,u_1,u_2\}\subseteq\mathcal{N}_T(u)$ and $\{u,v_1,v_2\}\subseteq\mathcal{N}_T(v)$ and, in $T^2$, $V(C(u))\cap V(C(v))=\{u,v\}$. Then $\{u_1,u_2\}$ and $\{v_1,v_2\}$ form an induced gap in $T^2$ (we are in the same situation as the one from Example~\ref{example}), a contradiction.
\end{proof}
We may now state the characterization.
\begin{Theorem}\label{linrestrees}
	Let $T$ be a tree and $G=T^2$ its square. The following are equi\-valent:
	\begin{itemize}
		\item[a)] $I(G)$ has a linear resolution.
		\item[b)] $T$ is one of the following graphs:\begin{itemize}
			\item[i)] $T$ is $L_n$ with $2\leq n\leq 5$;
			\item[ii)] $T$ is a star graph;
			\item[iii)] $T$ is a (partially) whiskered star.
		\end{itemize}  
	\end{itemize}
\end{Theorem}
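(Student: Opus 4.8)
The plan is to prove the two implications separately: (b) $\Rightarrow$ (a) is essentially bookkeeping built on the results already established, while (a) $\Rightarrow$ (b) carries the real content and is where I would concentrate the effort.

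For (b) $\Rightarrow$ (a) I would reduce the three families to the two already settled. If $T$ is a star, then $T^2$ is a complete graph by Proposition~\ref{propT2}(a), and the edge ideal of a complete graph has a linear resolution (its complement has no edges, hence is chordal, so Theorem~\ref{Froberg} applies). If $T$ is a (partially) whiskered star, then $I(T^2)$ has a linear resolution by Proposition~\ref{wstar} together with \cite[Theorem 3.2]{HHZ}. The short paths are then subsumed: $L_2$ and $L_3$ are star graphs, while $L_4$ and $L_5$ are (partially) whiskered stars, obtained by taking the central vertex as $x_0$ and the pendant vertices as whiskers. Thus every tree listed in (b) has the stated property; the overlap among the three families is harmless for an ``if and only if''.

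For (a) $\Rightarrow$ (b) the strategy is to bound the diameter and run a case analysis. By Lemma~\ref{diam} we have $\diam(T)\le 4$. If $\diam(T)\le 2$, then $T$ is a star, giving (ii) and covering $L_2,L_3$. If $\diam(T)=3$, then $T$ is a double star: its center is an edge $\{a,b\}$ and every remaining vertex is a leaf adjacent to $a$ or to $b$. Since the only vertices that can have degree $\ge 3$ are $a$ and $b$, Lemma~\ref{claim1} forces at most one of them, say $a$, to have degree $\ge 3$; then $b$ has degree $2$ and carries a single leaf, so taking $a$ as the center, its leaves and $b$ as the legs, and the leaf on $b$ as the unique whisker exhibits $T$ as a (partially) whiskered star with $m=1$ (the case where neither of $a,b$ has degree $\ge 3$ being $L_4$). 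This is (iii).

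The main work, and the step I expect to be the principal obstacle, is $\diam(T)=4$. Here $T$ has a unique center $c$ of eccentricity $2$, so every vertex lies within distance $2$ of $c$: the neighbours $v_1,\dots,v_k$ of $c$ are at distance $1$ and all other vertices are leaves, each a child of some $v_i$. The crux is to show that no $v_i$ carries two leaf children. Suppose to the contrary that $v_1$ has leaf children $a_1,a_2$; because $\diam(T)=4$ at least two branches must bear leaves, so some $v_2\ne v_1$ carries a leaf child $b$. A short distance computation in $T$ shows that in $G=T^2$ the pairs $\{a_1,a_2\}$ and $\{b,v_2\}$ are edges (distances $2$ and $1$), whereas each of $\{a_1,b\},\{a_1,v_2\},\{a_2,b\},\{a_2,v_2\}$ has distance at least $3$ and so is a non-edge; these four vertices therefore induce two disjoint edges and nothing more, i.e.\ an induced gap, contradicting Remark~\ref{gapfree}. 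Hence every $v_i$ has at most one leaf child, which is precisely the assertion that $T$ is a (partially) whiskered star, now with $m\ge 2$ (the narrowest instance being $L_5$). The delicate points are the uniqueness of the center, the argument that diameter $4$ forces a second leaf-bearing branch, and the distance bookkeeping that pins down the forbidden gap; once the gap is exhibited, Fr\"oberg's criterion finishes the case.
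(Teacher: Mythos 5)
Your proof is correct and follows essentially the same route as the paper: bound the diameter via Lemma~\ref{diam}, split into the cases $\diam(T)\le 2$, $\diam(T)=3$, $\diam(T)=4$, use Lemma~\ref{claim1} and induced gaps (Remark~\ref{gapfree}) to force the whiskered-star structure, and handle the converse through Proposition~\ref{propT2}(a) and Proposition~\ref{wstar}. The only notable difference is cosmetic: in the diameter-$4$ case you organize the argument around the unique center of $T$ (every vertex within distance $2$ of it, no branch carrying two leaves) rather than around a diametral path $u,v,w,x,y$ as the paper does, which if anything makes the global structure more explicit while exhibiting the same kind of gap.
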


\begin{proof} It is clear that, for every graph from b), $I(G)$  has a linear resolution. We assume that $I(G)$ has a linear resolution, therefore, by Lemma~\ref{diam}, one has that $\diam(T)\leq4$. If $n=2$, then $T$ is $L_2$. Therefore, we may assume that $n\geq 3$.
	
	According to Remark~\ref{gapfree}, $G$ is gap-free.
	
	If $G$ is a complete graph, then taking into account that $G=T^2$ and using Proposition~\ref{propT2}(a), one obtains that $T$ has to be a star graph. Note that, if $n=3$, we get $T=L_3$.
	
	Let's assume now that $G$ is not a complete graph. Therefore, there exist at least two cliques which implies that there are at least two cut-points in $T$. Moreover, one should note that $\diam(T)\geq3$. By Lemma~\ref{diam}, one has $\diam (T)=3$ or $\diam(T)=4$. We split the proof in two cases.
	
	\textbf{Case 1:} Assume that $\diam(T)=3$, therefore any two cut-points are adjacent. By Proposition~\ref{propT2}(d) every pair of cliques are intersecting in exactly two vertices. If all the vertices of $T$ have degree at most $2$, then $T$ is the graph $L_4$. If there is a vertex $v$ with $\deg_T(v)\geq3$, by Lemma~\ref{claim1} this vertex is unique. Such a tree is a whiskered star.
	
	\textbf{Case 2:} Assume that $\diam(T)=4$, in particular there is an induced path in $T$ with vertices $u,v,w,x,y$ such that $u$ and $y$ are free vertices in $T$. If $\deg_T(v)=\deg_T(w)=\deg_T(x)=2$, then $T$ is $L_5$. Assume that one of the vertices $v,w,x$ has degree at least $3$. One may verify that it can be only one (the proof is similar to the one for Lemma~\ref{claim1}). If we assume that $\deg_T(v)\geq 3$, then there is a vertex $v_1\in\mathcal{N}_T[v]\setminus\{u,w\}$. But $\{u,v_1\}$ and $\{x,y\}$ form an induced gap in $G$, a contradiction. By using similar arguments, one also gets that $\deg_T(x)=2$ Therefore, the only vertex that can have degree at least $3$ is $w$. Hence, $T$ is a whiskered star. 
\end{proof}
As a consequence, we get a complete characterization of trees $T$ with the property that the complement of their square is chordal, (that is $T^2$ is co-chordal):
\begin{Corollary}
	Let $T$ be a tree and $G=T^2$ its square. Then $G$ is a co-chordal graph if and only if $T$ is one of the following graphs:\begin{itemize}
		\item[i)] $T$ is $L_n$ with $2\leq n\leq 5$;
		\item[ii)] $T$ is a star graph;
		\item[iii)] $T$ is a (partially) whiskered star.
	\end{itemize}  
\end{Corollary}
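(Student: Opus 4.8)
The plan is to derive this statement as an immediate consequence of the two main results already available, namely Fr\"oberg's criterion (Theorem~\ref{Froberg}) and the classification obtained in Theorem~\ref{linrestrees}. By definition, $G=T^2$ being co-chordal means precisely that its complement $(T^2)^c$ is chordal, and this is exactly the hypothesis appearing on the graph side of Fr\"oberg's theorem.

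First I would invoke Theorem~\ref{Froberg}: for the graph $G=T^2$, the edge ideal $I(G)$ has a linear resolution if and only if $G$ is co-chordal. Thus the condition ``$T^2$ is co-chordal'' is logically equivalent to ``$I(T^2)$ has a linear resolution.'' This single biconditional translates the purely graph-theoretic property we wish to characterize into the homological property whose characterization we have already established.

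Next I would apply Theorem~\ref{linrestrees}, whose statement b) lists exactly the trees $T$ for which $I(T^2)$ has a linear resolution: the path $L_n$ with $2\le n\le 5$, the star graph, and the (partially) whiskered star. Chaining the two equivalences yields that $T^2$ is co-chordal if and only if $T$ belongs to this list, which is precisely the assertion of the Corollary.

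Since all the combinatorial work has been carried out in Lemma~\ref{diam}, Lemma~\ref{claim1}, and Theorem~\ref{linrestrees}, there is no genuine obstacle here: the proof is a two-step translation through Fr\"oberg's criterion. The only thing to be careful about is that the list of graphs in the Corollary coincides verbatim with part b) of Theorem~\ref{linrestrees}, so that no further case analysis is needed and the equivalence transfers directly.
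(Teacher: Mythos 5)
Your proof is correct and follows exactly the paper's own argument: the corollary is obtained by chaining Fr\"oberg's criterion (Theorem~\ref{Froberg}) with the classification in Theorem~\ref{linrestrees}. Nothing further is needed.
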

\begin{proof} It follows easily by Theorem \ref{Froberg} and Theorem \ref{linrestrees}.
\end{proof}
The above results also lead to the following statement:
\begin{Corollary}
	If $T$ is a (partially) whiskered star graph, a star graph or $L_n$, with $2\leq n\leq 5$ and $G=T^2$, then:
	\begin{itemize}
		\item[a)] $\indmat (G^c)=1$
		\item[b)] $\cochord G=\cochord G^c=1.$
	\end{itemize}  
\end{Corollary}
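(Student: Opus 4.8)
The plan is to derive all three equalities from two chordality facts about the square. First, $G=T^2$ is chordal by the theorem of Harary and Ross. Second, since $T$ is one of the trees listed in Theorem~\ref{linrestrees}, the edge ideal $I(G)$ has a linear resolution, so by Fr\"oberg's criterion (Theorem~\ref{Froberg}) the graph $G$ is co-chordal; equivalently $G^c$ is chordal. Hence \emph{both} $G$ and $G^c$ are simultaneously chordal and co-chordal, and this self-duality is what forces the invariants down to $1$. I will treat the non-complete case throughout; if $G$ is complete, which by Proposition~\ref{propT2}(a) happens exactly when $T$ is a star (this includes $L_2$ and $L_3$), then $G^c$ is edgeless and $\indmat(G^c)=\cochord(G^c)=0$ while $\cochord(G)=1$ still holds, so the statement is to be read for non-complete squares.

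For part b) I would use the elementary observation that if a graph $H$ is co-chordal and has at least one edge, then $H$ covers its own edges by the single co-chordal subgraph $H$ itself, whence $\cochord(H)=1$. Applying this to $H=G$ (co-chordal by Fr\"oberg) and to $H=G^c$ (co-chordal because $(G^c)^c=G$ is chordal), and using that neither graph is edgeless once $G$ is not complete, gives $\cochord(G)=\cochord(G^c)=1$.

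For part a) I would feed part b) into the regularity inequalities of Proposition~\ref{indmatgen}. Since $G^c$ is chordal, part (b) of that proposition yields $\reg\,S/I(G^c)=\indmat(G^c)$, and part (c) together with $\cochord(G^c)=1$ gives $\reg\,S/I(G^c)\le 1$; therefore $\indmat(G^c)\le 1$. On the other hand $G^c$ contains at least one edge, so $\indmat(G^c)\ge 1$, and equality follows. One can also see $\indmat(G^c)\le 1$ directly, without regularity: an induced matching of size two in $G^c$ would be an induced $2K_2$ there, equivalently an induced $4$-cycle in $G$, and this cannot exist because $G=T^2$ is chordal.

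The argument is essentially a chaining of Theorem~\ref{linrestrees}, Fr\"oberg's criterion, and the Harary--Ross chordality of $T^2$, so no individual step is technically hard. The only points requiring care are the degenerate complete case, where the three invariants are not all equal to $1$, and the clean fact that a co-chordal graph with at least one edge has co-chordal cover number exactly $1$; the one piece of genuine content is that the chordality of $G$ rules out an induced $2K_2$ in $G^c$, which is precisely what pins $\indmat(G^c)$ down to $1$.
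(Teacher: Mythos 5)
Your proof is correct, and its core is the same chain the paper itself uses: chordality of $T^2$ (Harary--Ross) gives that $G^c$ is co-chordal, Theorem~\ref{linrestrees} plus Fr\"oberg's criterion (Theorem~\ref{Froberg}) gives that $G$ is co-chordal, and Proposition~\ref{indmatgen} converts these facts into the two claims; your treatment of b) (a co-chordal graph with an edge covers itself, so its cover number is $1$) is exactly the paper's. Two deviations are worth recording. First, for a) the paper applies Fr\"oberg directly to $G^c$: $I(G^c)$ has a linear resolution, hence $\reg\, S/I(G^c)=1$, and Proposition~\ref{indmatgen}b) applied to the chordal graph $G^c$ gives $\indmat(G^c)=1$; you instead route through part b) and Woodroofe's bound $\reg\, S/I(G^c)\leq\cochord(G^c)=1$, which is equivalent but slightly more roundabout. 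Your alternative combinatorial argument is the more valuable one: an induced matching of size two in $G^c$ is precisely an induced $C_4$ in $G$, impossible since $G=T^2$ is chordal, so $\indmat(G^c)\leq 1$ follows with no homological input at all. Second, your caveat about the complete case is a genuine correction that the paper skips: when $T$ is a star (including $L_2$ and $L_3$), $G=T^2$ is complete by Proposition~\ref{propT2}(a), so $G^c$ is edgeless and $\indmat(G^c)=\cochord(G^c)=0$; the paper's step ``$\reg\, S/I(G^c)=1$'' silently assumes $I(G^c)\neq 0$, so the corollary as literally stated holds only under the non-complete reading you give it.
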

\begin{proof}
	In order to prove $a)$, it is enough to see that $G^c$ is co-chordal since $G=T^2$ is chordal. Therefore $I(G^c)$ has a linear resolution by Theorem~\ref{Froberg}, thus $\reg\,S/I(G^c)=1=\indmat(G^c)$. The last equality follows by Proposition~\ref{indmatgen}b).
	
	For the second statement, one has to note that $I(G)$ has a linear resolution, therefore $G$ is co-chordal and $\cochord(G)=1$. Moreover, $G=T^2$ is chordal, therefore $G^c$ is co-chordal. Thus $\cochord(G^c)=1$.
\end{proof}
We close this section with the following remark. 
\begin{Remark}\rm
	In the view of Proposition~\ref{general}, the results obtained in this section are valid for larger classes of graphs which are not trees, but their square are isomorphic with the square of a tree.
\end{Remark}
\section{Edge ideals of squares of classes of trees} 
\label{sec:4}
In this section we will consider classes of trees such as paths and double brooms and we will determine some of their invariants such as the Krull dimension, the depth and the Castelnuovo--Mumford regularity. We recall some notions that will be intensively used in the sequel. We follow \cite{K} in order to fix the notations.

A graph $B$ is called \textit{a bouquet} if $B$ is a star graph with the vertex set $V(B)=\{w,z_1,\ldots,z_t\}$, $t\geq 1$, and $E(B)=\{\{w,z_i\}:1\leq i\leq t\}$ the set of edges. The vertex $w$ is called \textit{the root of} $B$, the vertices $z_1,\ldots,z_t$ are called \textit{the flowers of} $B$ and the edges of the graph $B$ are called \textit{stems}. We denote by $F(B)$ the set of flowers of $B$. Let $G$ be a graph with the vertex set $V(G)$, $E(G)$ be its set of edges, and $\mathcal{B}=\{B_1,\ldots,B_r\}$ a set of bouquets of $G$. Then \[\mathcal{F}(\mathcal{B})=\{z\in V(G):z\mbox{ is a flower in some bouquet from }\mathcal{B}\},\]
\[\mathcal{R}(\mathcal{B})=\{w\in V(G): \mbox{ is a root in some bouquet from }\mathcal{B}\}.\]

A set $\mathcal{B}$ of bouquets of $G$ is called \textit{semi-strongly disjoint} if $V(B_i)\cap V (B_j) = \emptyset$ for all $i\neq j$ and any two vertices belonging to $\mathcal{R}(\mathcal{B})$ are not adjacent in $G$.

Let $d'_G:=\max\{|\mathcal{F}(\mathcal{B})|: \mathcal{B} \mbox{ is a semi-strongly disjoint set of bouquets of } G\}$.

For the case of chordal graphs, the projective dimension can be computed in terms of semi-strongly disjoint sets. More precisely:
\begin{Theorem}\cite[Theorem 5.1]{K}\label{pdd'}
	Let $G$ be a chordal graph. Then $$\projdim S/I(G)=d'_G.$$
\end{Theorem}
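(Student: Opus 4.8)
The plan is to establish the two inequalities $\projdim S/I(G)\ge d'_G$ and $\projdim S/I(G)\le d'_G$ separately, since only one of them uses chordality. The first I would prove for an arbitrary graph, while the second is where chordality is indispensable: already for $G=C_4$ one has $\projdim S/I(C_4)=3$ but $d'_{C_4}=2$, so the upper bound genuinely fails without it. Throughout I would work with Hochster's formula, which expresses the multigraded Betti numbers of $S/I(G)$ through the reduced simplicial homology of the independence complexes $\Ind(G_W)$ of induced subgraphs, so that $\projdim S/I(G)=\max\{|W|-k-1:\widetilde{H}_k(\Ind(G_W);\kk)\neq 0\}$.

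For the lower bound, fix a semi-strongly disjoint family $\mathcal{B}=\{B_1,\dots,B_r\}$ of bouquets realizing $d'_G$, with roots $w_1,\dots,w_r$ and $|\mathcal{F}(\mathcal{B})|=d'_G$, and set $W=\mathcal{R}(\mathcal{B})\cup\mathcal{F}(\mathcal{B})$, so that $|W|=r+d'_G$. Since the Betti numbers of $S/I(G_W)$ are bounded above by those of $S/I(G)$ (again by Hochster's formula, as every restriction of $\Ind(G_W)$ is a restriction of $\Ind(G)$), it suffices to produce a nonzero class in $\widetilde{H}_{r-1}(\Ind(G_W);\kk)$, because the corresponding Betti number sits in homological degree $|W|-(r-1)-1=d'_G$. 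The key point is that the roots $w_1,\dots,w_r$ form an independent set---this is exactly the semi-strong hypothesis---so they span an $(r-1)$-face of $\Ind(G_W)$, while each $w_i$ is barred from every face meeting the flowers of $B_i$. Selecting one flower from each bouquet exhibits the boundary of an $r$-dimensional cross-polytope, an $(r-1)$-sphere, inside $\Ind(G_W)$, and I would show its fundamental class survives in $\widetilde{H}_{r-1}$, giving $\projdim S/I(G)\ge d'_G$. Verifying this non-vanishing in the presence of the extra edges among flowers is the one delicate point on this side.

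For the upper bound I would induct on $|V(G)|$, using that a chordal graph has a simplicial vertex $v$ (so $\mathcal{N}[v]$ is a clique) and that both $G\setminus v$ and $G\setminus\mathcal{N}[v]$ are again chordal. Splitting by $x_v$ gives the short exact sequence $0\to (S/(I(G):x_v))(-1)\xrightarrow{\,x_v\,} S/I(G)\to S/(I(G)+(x_v))\to 0$, whence $\projdim S/I(G)\le\max\{\projdim S/(I(G):x_v),\ \projdim S/(I(G)+(x_v))\}$. Here $(I(G):x_v)=(x_u:u\in\mathcal{N}(v))+I(G\setminus\mathcal{N}[v])$ in disjoint variables, so $\projdim S/(I(G):x_v)=|\mathcal{N}(v)|+\projdim S/I(G\setminus\mathcal{N}[v])$, while $(I(G)+(x_v))=(x_v)+I(G\setminus v)$ gives $\projdim S/(I(G)+(x_v))=1+\projdim S/I(G\setminus v)$. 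The colon term is easily controlled: by induction it is at most $|\mathcal{N}(v)|+d'_{G\setminus\mathcal{N}[v]}$, and adjoining to an optimal family for $G\setminus\mathcal{N}[v]$ the single bouquet with root $v$ and flowers $\mathcal{N}(v)$ produces a semi-strongly disjoint family of $G$ (the new root $v$ is non-adjacent to the old roots, which lie outside $\mathcal{N}[v]$), so $|\mathcal{N}(v)|+d'_{G\setminus\mathcal{N}[v]}\le d'_G$.

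The hard part---and where I expect the real difficulty---is the remaining term $1+\projdim S/I(G\setminus v)\le d'_G$. By induction this reduces to the purely combinatorial claim $1+d'_{G\setminus v}\le d'_G$, namely that an optimal semi-strongly disjoint bouquet family of $G\setminus v$ can always be enlarged by one flower once $v$ is restored. This is where simpliciality must be used essentially: the clique structure of $\mathcal{N}[v]$ is what lets one reattach $v$---either as a fresh root with a suitable neighbour as flower, or by promoting $v$ to a flower of an existing bouquet---without ever creating an adjacency between two roots or an overlap of vertex sets. Making this reattachment work in every configuration, and confirming that it is this term rather than the colon term that can be the binding constraint, is the combinatorial heart of the theorem; the impossibility of the analogous move for $C_4$ is exactly what breaks equality in the non-chordal case. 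Once this lemma is in place the induction closes, and the two inequalities combine to give $\projdim S/I(G)=d'_G$.
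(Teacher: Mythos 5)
This statement is quoted in the paper from Kimura (\cite[Theorem 5.1]{K}) without proof, so your attempt can only be measured against correctness and against Kimura's known argument. The decisive problem is in your upper bound. Your induction reduces to the combinatorial lemma that $1+d'_{G\setminus v}\le d'_G$ whenever $v$ is a simplicial vertex of a chordal graph $G$, and this lemma is false. Take $G=L_4$, the path on four vertices; its simplicial vertices are exactly the two endpoints, so let $v$ be an endpoint. Then $G\setminus v=L_3$ has $d'_{L_3}=2$ (root the middle vertex, two flowers), while $d'_{L_4}=2$: a single bouquet has at most two flowers (the maximum degree), and any two vertex-disjoint bouquets with non-adjacent roots in $L_4$ also carry only two flowers in total. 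Hence $1+d'_{G\setminus v}=3>2=d'_G$. Moreover this is not just a defect of the lemma but of the whole scheme: one computes $\projdim S/(I(L_4)+(x_v))=1+\projdim S/I(L_3)=3$, strictly bigger than $\projdim S/I(L_4)=2$, so the inequality $\projdim S/I\le\max\{\projdim S/(I:x_v),\,\projdim S/(I+(x_v))\}$ you start from is \emph{strict} here, and no argument that bounds both terms of that maximum by $d'_G$ can exist. A correct proof must either exploit the cancellation that is possible precisely when $\projdim S/(I+(x_v))=\projdim S/(I:x_v)+1$, or avoid this decomposition entirely. Kimura's route is the latter: chordal graphs have sequentially Cohen--Macaulay edge ideals (Francisco--Van Tuyl), which gives $\projdim S/I(G)=\bight I(G)$ --- the very equality this paper later invokes as \cite[Corollary 5.6]{K} --- and the remaining content is the combinatorial identification $\bight I(G)=d'_G$ for chordal $G$.

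Your lower bound is also incomplete as written, though it is the repairable half. Semi-strong disjointness does \emph{not} forbid edges joining a root of one bouquet to flowers of another, nor edges among flowers of different bouquets, and such edges destroy the cross-polytope: on $K_4$ minus an edge, viewed as two one-flower bouquets $\{w_1,z_1\}$, $\{w_2,z_2\}$ with $w_1\not\sim w_2$, the complex $\Ind(G_W)$ consists of four vertices and the single edge $\{w_1,w_2\}$, so $\widetilde{H}_1=0$. (That family is not optimal, but nothing in your sketch uses optimality; it is exactly to outlaw these extra edges that Kimura's homological non-vanishing theorem works with \emph{strongly} disjoint bouquets, where chosen stems form an induced matching.) Fortunately the inequality $\projdim S/I(G)\ge d'_G$ is true for every graph by a much softer argument: the roots form an independent set, extend it to a maximal independent set $A$; every flower is adjacent to its root, hence lies outside $A$; therefore all $d'_G$ flowers lie in the minimal vertex cover $V\setminus A$, giving $d'_G\le\bight I(G)$, and $\bight I(G)\le\projdim S/I(G)$ always (localize at the associated prime of a largest minimal vertex cover and apply Auslander--Buchsbaum). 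Substituting this for the homological argument fixes the lower bound, but the upper bound still needs an argument of Kimura's type.
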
 

Firstly we consider the case of path graphs. Note that squares of paths have also been studied in \cite{II}, where an homological approach is used to determine the depth.

\begin{Proposition}\label{dimLn2} Let $n$ be an integer, $n\geq3$. Then $\dim S/I(L_n^2)=\left\lceil\frac{n}{3}\right\rceil$.
\end{Proposition}
\begin{proof}According to Proposition \ref{dimindep}, there is a maximal independent set $W$ such that $\dim S/I\left(L_n^2\right)=|W|$. Let's assume that $W$ is such a maximal independent set and $|W|=d$. We have to prove that $d=\lceil\frac{n}{3}\rceil$. 
	
	Since $E\left(L_n^2\right)=\{\{i,i+1\}:1\leq i\leq n-1\}\cup\{\{i,i+2\}:1\leq i\leq n-2\}$, one has that $|j-i|\geq 3$, for all $i,j\in W$. Therefore, $d\leq\lceil\frac{n}{3}\rceil$.
	
	For the other inequality, let's assume first that $n=3k$, that is $\lceil\frac{n}{3}\rceil=\left[\frac{n}{3}\right]=k$. Then the set $$\{1,4,7,\ldots,3k-2\}=\{1,1+3,\ldots,1+(k-1)3\}$$is also a maximal independent set of cardinality $k$. Hence $d\geq k=\lceil\frac{n}{3}\rceil$.
	
	If $n=3k+1$ or $n=3k+2$, then the set $$\{1,4,7,\ldots,3k-2,3k+1\}=\{1,1+3,\ldots,1+(k-1)3, 1+3k\}$$ is also a maximal independent set of cardinality $k+1=\lceil\frac{n}{3}\rceil$. Therefore we get $d\geq \lceil\frac{n}{3}\rceil$. The equality follows.
\end{proof}
Taking into account Theorem \ref{pdd'}, in order to determine the depth, we have to compute $d'_{L_n^2}$.
\begin{Proposition}\label{dg2} Let $n\geq 3$ be an integer. Then $d'_{L_n^2}=n-\left\lceil\frac{n}{5}\right\rceil$.
\end{Proposition}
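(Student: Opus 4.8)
The quantity $d'_{L_n^2}$ is by definition the maximum total number of flowers $|\mathcal{F}(\mathcal{B})|$ taken over all semi-strongly disjoint sets of bouquets $\mathcal{B}$ of $G=L_n^2$, so the plan is to establish the two matching inequalities $d'_{L_n^2}\le n-\lceil n/5\rceil$ and $d'_{L_n^2}\ge n-\lceil n/5\rceil$. Throughout I will use the elementary identity $n-\lceil n/5\rceil=\lfloor 4n/5\rfloor$, which is verified at once by writing $n=5q+t$ with $0\le t\le 4$ and checking the five residues.

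For the upper bound, the key structural observation is that every vertex $i$ of $L_n^2$ satisfies $\mathcal{N}(i)\subseteq\{i-2,i-1,i+1,i+2\}$, so the maximum degree of $L_n^2$ is at most $4$. Since the flowers of a bouquet all lie in the neighbourhood of its root, each bouquet has at most $4$ flowers and hence at most $5$ vertices. Now let $\mathcal{B}=\{B_1,\ldots,B_r\}$ be any semi-strongly disjoint set of bouquets and let $u$ be the number of vertices it uses; because the bouquets are vertex-disjoint and roots are never flowers, $u=|\mathcal{F}(\mathcal{B})|+r$. The bound of at most $5$ vertices per bouquet gives $u\le 5r$, so $r\ge u/5$ and
\[
|\mathcal{F}(\mathcal{B})|=u-r\le u-\frac{u}{5}=\frac{4u}{5}.
\]
As $|\mathcal{F}(\mathcal{B})|$ is an integer and $u\le n$, this yields $|\mathcal{F}(\mathcal{B})|\le\lfloor 4n/5\rfloor=n-\lceil n/5\rceil$.

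For the lower bound I will exhibit a semi-strongly disjoint set attaining $\lfloor 4n/5\rfloor$ flowers. Write $n=5k+s$ with $0\le s\le 4$. On each block $\{5j+1,\ldots,5j+5\}$, $0\le j\le k-1$, I place the bouquet with root $5j+3$ and four flowers $5j+1,5j+2,5j+4,5j+5$; this is admissible since all four flowers are within distance $2$ of the root, and the $k$ blocks contribute $4k$ flowers. The roots of consecutive blocks are at distance $5$, hence non-adjacent in $L_n^2$. For the $s$ leftover vertices I add one more bouquet when $s\ge 2$: for $s=2$ take root $5k+1$ with flower $5k+2$; for $s=3$ take root $5k+2$ with flowers $5k+1,5k+3$; for $s=4$ take root $5k+2$ with flowers $5k+1,5k+3,5k+4$; and for $s=1$ I simply leave the single leftover vertex unused. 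In each case the new root lies at distance $3$ or $4$ from the previous root $5k-2$, so it remains non-adjacent to all earlier roots, and the new bouquet is disjoint from the blocks. The resulting flower counts are $4k,4k,4k+1,4k+2,4k+3$ for $s=0,1,2,3,4$, which in every case equals $\lfloor 4n/5\rfloor$.

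Combining the two inequalities gives $d'_{L_n^2}=n-\lceil n/5\rceil$. The only point demanding genuine care is the lower bound: one must simultaneously check that the chosen roots form an independent set of $L_n^2$ (this is exactly the distance-$\ge 3$ spacing) and handle the four nonzero residues of $n$ modulo $5$ so that the leftover block is packed as tightly as the arithmetic allows. The upper bound, once the degree-$\le 4$ observation is in place, is a short counting argument and should present no obstacle.
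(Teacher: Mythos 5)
Your proof is correct, and although it follows the same broad outline as the paper (an explicit family of bouquets for the lower bound, a counting argument for the upper bound), both halves differ in substance. For the upper bound the paper only argues informally that increasing or decreasing the number of roots from its exhibited configuration must lose flowers; your double count --- $u=|\mathcal{F}(\mathcal{B})|+r\le 5r$ from vertex-disjointness together with the fact that every vertex of $L_n^2$ has degree at most $4$, hence $|\mathcal{F}(\mathcal{B})|\le \frac{4}{5}u\le\left\lfloor \frac{4n}{5}\right\rfloor=n-\left\lceil\frac{n}{5}\right\rceil$ --- is fully rigorous, and it does not even need the non-adjacency condition on the roots. For the lower bound the paper takes roots $1,6,\ldots,5m+1$ with flowers equal to full neighbourhoods, while you centre the root $5j+3$ in each block of five consecutive vertices and then patch the leftover $s=n-5k$ vertices residue by residue. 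Your version is in fact the more careful one: in the paper's construction the claim $\mathcal{F}(\mathcal{B})=V\setminus\mathcal{R}(\mathcal{B})$ fails when $n\equiv 4\pmod 5$, because the vertex $5m+4$ is at distance $3$ from the root $5m+1$ and so can be neither a root nor a flower, leaving the flower count one short of $n-\lceil n/5\rceil$; and when $n\equiv 0\pmod 5$ the listed root $5m+1$ does not even lie in the vertex set. Your block construction, with its explicit check of all five residues and of the spacing between roots, closes exactly these gaps, so your argument both proves the stated formula and repairs the slips in the published proof.
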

\begin{proof} We split the proof in two cases.
	
	\textbf{{Case 1:}} $n\equiv k\mbox{ mod } 5$, $1\leq k\leq 4$.
	
	We denote $m=\left[\frac{n}{5}\right]$ and we consider the set of bouquets $\mathcal{B}=\{B_0,B_1,\ldots,B_m\}$ where \[\mathcal{R}(\mathcal{B})=\{1,6,11\ldots,5m+1\}\] and $$F(B_0)=\{2,3\}=\mathcal{N}_{L_n^2}(1),$$ $$F(B_i)=\mathcal{N}_{L_n^2}(5i+1)=\{5i-1,5i,5i+2,5i+3\},1\leq i\leq m-1$$and $$F(B_m)=\mathcal{N}_{L_n^2}(5m+1).$$
	Therefore, $\mathcal{F}(\mathcal{B})=V\setminus \mathcal{R}(\mathcal{B})$ and $| \mathcal{F}(\mathcal{B})|=n-\left\lceil\frac{n}{5}\right\rceil$.
	
	Moreover, one may note that $\mathcal{B}$ is a semi-strongly disjoint set of bouquets in $L_n^2$. Indeed, it is obvious that $V(B_i)\cap V(B_j)=\emptyset$, for all $i\neq j$, $0\leq i,j\leq m$. Since $j-i\geq5$ for all $i,j\in\mathcal{R}(\mathcal{B})$ with $i\neq j$, the set $\mathcal{R}(\mathcal{B})$ does not contain adjacent vertices in $L_n^2$. So both conditions are fulfilled.
	
	By the definition of $d'_{L_n^2}$, one must have $d'_{L_n^2}\geq n-\left\lceil\frac{n}{5}\right\rceil$. 
	
	In order to prove that we have equality, we remark that $F(\mathcal{B})=V\setminus R(\mathcal{B})$ implies that by increasing the number of roots, the number of flowers will decrease. Moreover, if one decreases the number of roots, even if one would consider the set of flowers as given by all the neighbours, the number of flowers will be strictly lower than the one we obtained. Therefore $d'_{L_n^2}=n-\left\lceil\frac{n}{5}\right\rceil$.
	
	\textbf{{Case 2:}} $n\equiv 0\mbox{ mod } 5$. We proceed as in the above case.
	We denote $m=\left[\frac{n}{5}\right]$ and we consider the set of bouquets $\mathcal{B}=\{B_0,B_1,\ldots,B_m\}$ where \[\mathcal{R}(\mathcal{B})=\{1,6,11\ldots,5m+1\}\] and $$F(B_0)=\{2,3\}=\mathcal{N}_{L_n^2}(1),$$ $$F(B_i)=\mathcal{N}_{L_n^2}(5i+1)=\{5i-1,5i,5i+2,5i+3\},1\leq i\leq m-1$$and $$F(B_m)=\{5m-1\}.$$
	Therefore, $\mathcal{F}(\mathcal{B})=V\setminus \mathcal{R}(\mathcal{B})$ and $|\mathcal{F}(\mathcal{B})|=n-\left\lceil\frac{n}{5}\right\rceil$. One may note that $\mathcal{B}$ is a semi-strongly disjoint set of bouquets in $L_n^2$.
	Arguing as before, one obtains $d'_{L_n^2}=n-\left\lceil\frac{n}{5}\right\rceil$. 
\end{proof}
By using the above result we can easily determine the depth  of $S/I\left(L_n^2\right)$ and the big height of $I\left(L_n^2\right)$. Note that the next result was also obtained in \cite[Theorem 3.8]{II} in a more general case.
\begin{Corollary}Let $n\geq3$ be an integer. Then $\depth\, S/I(L_n^2)=\left\lceil\frac{n}{5}\right\rceil$.
	\end{Corollary}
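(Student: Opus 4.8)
The plan is to deduce the depth immediately from the projective dimension via the Auslander--Buchsbaum formula, since all of the genuine combinatorial work has already been carried out in Proposition~\ref{dg2}. The underlying point is that $S/I(L_n^2)$ is a finitely generated graded module over the polynomial ring $S=\kk[x_1,\ldots,x_n]$, so one has the graded Auslander--Buchsbaum identity
\[
\depth\, S/I(L_n^2)+\projdim\, S/I(L_n^2)=\depth\, S=n,
\]
the last equality holding because $S$ is a polynomial ring in $n$ variables (the path $L_n$ has $n$ vertices).

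The key step is to supply the projective dimension. Since $L_n$ is a tree, its square $L_n^2$ is a chordal graph by the theorem of Harary and Ross recalled in Section~\ref{sec:2}. This is precisely the hypothesis needed to invoke Kimura's result, Theorem~\ref{pdd'}, which gives
\[
\projdim\, S/I(L_n^2)=d'_{L_n^2}.
\]
Combining this with the computation $d'_{L_n^2}=n-\left\lceil\frac{n}{5}\right\rceil$ established in Proposition~\ref{dg2}, I obtain $\projdim\, S/I(L_n^2)=n-\left\lceil\frac{n}{5}\right\rceil$.

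Substituting into the Auslander--Buchsbaum identity then yields
\[
\depth\, S/I(L_n^2)=n-\projdim\, S/I(L_n^2)=n-\left(n-\left\lceil\frac{n}{5}\right\rceil\right)=\left\lceil\frac{n}{5}\right\rceil,
\]
which is the desired formula. I do not anticipate any real obstacle here: the only things to verify are the applicability of Theorem~\ref{pdd'} (which requires chordality of $L_n^2$, already guaranteed) and that the ambient ring has exactly $n$ variables. The entire difficulty of the result is concentrated in Proposition~\ref{dg2}, where one must exhibit an optimal semi-strongly disjoint set of bouquets and argue that it cannot be improved; once that is in hand, this corollary is a one-line consequence of Auslander--Buchsbaum.
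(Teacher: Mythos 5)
Your proof is correct and follows essentially the same route as the paper: invoke chordality of $L_n^2$ to apply Kimura's Theorem~\ref{pdd'}, substitute the value $d'_{L_n^2}=n-\left\lceil\frac{n}{5}\right\rceil$ from Proposition~\ref{dg2}, and convert projective dimension to depth. The only difference is that you make the Auslander--Buchsbaum formula explicit, whereas the paper leaves that final step implicit in its ``therefore.''
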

	\begin{proof} Since $L_n^2$ is a chordal graph, $\projdim\,S/I(L_n^2)=d'_{L_n^2}$, by Theorem~\ref{pdd'}. According to Proposition~\ref{dg2}, we have that $\projdim\S/I(L_n^2)=n-\left\lceil\frac{n}{5}\right\rceil$, therefore $\depth\,S/I(L_n^2)=\left\lceil\frac{n}{5}\right\rceil$.
\end{proof}
\begin{Corollary} Let $n\geq 3$ be an integer. Then $\bight(S/I(L_n^2))=n-\left\lceil\frac{n}{5}\right\rceil$.
\end{Corollary}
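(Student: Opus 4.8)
The plan is to translate $\bight\bigl(I(L_n^2)\bigr)$ into a purely combinatorial quantity and then bound that quantity from both sides. Since $I(G)$ is a squarefree monomial ideal it is radical, so every associated prime is minimal; these are exactly the primes $\pp_C=\langle x_i:i\in C\rangle$ attached to the minimal vertex covers $C$ of $G$, and $\height\pp_C=|C|$. Hence $\bight(I(G))=\max\{|C|:C\text{ a minimal vertex cover of }G\}$. Passing to complements, a minimal vertex cover is precisely the complement of a maximal independent set, so $\bight(I(G))=n-\min\{|W|:W\text{ a maximal independent set of }G\}$. Thus the corollary reduces to showing that the smallest maximal independent set of $L_n^2$ has cardinality $\lceil n/5\rceil$.

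For the lower bound on $\bight$ (equivalently, an upper bound on the size of the smallest maximal independent set) I would reuse the bouquet data from the proof of Proposition~\ref{dg2}. There the root set $\mathcal{R}(\mathcal{B})$ has size $n-|\mathcal{F}(\mathcal{B})|=\lceil n/5\rceil$, its elements are pairwise non-adjacent in $L_n^2$ (by the semi-strongly disjoint condition), and $\mathcal{F}(\mathcal{B})=V\setminus\mathcal{R}(\mathcal{B})$ with every flower joined to its root by a stem. Consequently $\mathcal{R}(\mathcal{B})$ is an independent set that dominates $V$, i.e.\ a maximal independent set, and its complement $\mathcal{F}(\mathcal{B})$ is a minimal vertex cover of cardinality $n-\lceil n/5\rceil$. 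The associated prime $\pp_{\mathcal{F}(\mathcal{B})}$ then has height $n-\lceil n/5\rceil$, so $\bight(I(L_n^2))\geq n-\lceil n/5\rceil$.

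For the matching upper bound I would argue by a domination count. In $L_n^2$ the closed neighbourhood of any vertex $v$ is $\{v-2,v-1,v,v+1,v+2\}\cap[n]$, which contains at most five vertices. Any maximal independent set $W$ is in particular a dominating set, so $V=\bigcup_{w\in W}\mathcal{N}[w]$ and therefore $n\leq\sum_{w\in W}|\mathcal{N}[w]|\leq 5|W|$, forcing $|W|\geq\lceil n/5\rceil$. Hence every maximal independent set of $L_n^2$ has size at least $\lceil n/5\rceil$, which gives $\bight(I(L_n^2))\leq n-\lceil n/5\rceil$ and, combined with the previous paragraph, the claimed equality.

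Each step is light; the point to get right is the opening reduction $\bight=n-\min|W|$ via radicality and complementation, together with the verification that $\mathcal{R}(\mathcal{B})$ really is a \emph{maximal} independent set, i.e.\ that it dominates $V$ — which is exactly what the relation $\mathcal{F}(\mathcal{B})=V\setminus\mathcal{R}(\mathcal{B})$ guarantees. As an alternative, one could avoid the combinatorial upper bound entirely: since $L_n^2$ is chordal, $S/I(L_n^2)$ is sequentially Cohen--Macaulay, and for such rings $\bight(I)=\projdim S/I$, so the value $n-\lceil n/5\rceil$ follows directly from Theorem~\ref{pdd'} and Proposition~\ref{dg2}; I would nonetheless prefer the self-contained route above, which does not appeal to that additional machinery.
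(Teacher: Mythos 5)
Your main argument is correct in outline and takes a genuinely different route from the paper, which disposes of the corollary in one line: since $L_n^2$ is chordal, \cite[Corollary 5.6]{K} gives $\bight(S/I(L_n^2))=\projdim(S/I(L_n^2))$, and the right-hand side was already computed to be $n-\left\lceil\frac{n}{5}\right\rceil$ via Theorem~\ref{pdd'} and Proposition~\ref{dg2}. So the ``alternative'' you mention at the end (chordal $\Rightarrow$ sequentially Cohen--Macaulay $\Rightarrow$ $\bight=\projdim$) is in substance the paper's actual proof, with Kimura's corollary playing the role of that machinery. Your preferred route instead identifies $\bight(S/I(L_n^2))$ with $n$ minus the minimum size of a maximal independent set (via radicality, minimal vertex covers, and complementation — all correct) and computes that minimum directly; this is more elementary, independent of the projective-dimension results, and your domination count for the upper bound ($|\mathcal{N}[v]|\le 5$ in $L_n^2$, so any maximal independent set has size at least $\lceil n/5\rceil$) is clean and correct.

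The one genuine soft spot is the lower bound, where you outsource exactly the point you yourself flag as critical. You import from the proof of Proposition~\ref{dg2} the claim that $\mathcal{F}(\mathcal{B})=V\setminus\mathcal{R}(\mathcal{B})$, i.e.\ that the root set $\{1,6,\ldots,5m+1\}$ dominates $V$. That claim fails when $n\equiv 4\pmod 5$: the vertex $n=5m+4$ is at distance $3$ in $L_n$ from the nearest root $5m+1$, so it is neither a root nor a flower, and $\mathcal{R}(\mathcal{B})$ is then \emph{not} a maximal independent set (it can be enlarged by $n$); the paper's Case 2 ($5\mid n$) is similarly garbled, since its listed root $5m+1=n+1$ does not exist. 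The statement of Proposition~\ref{dg2} is nevertheless true, and the patch inside your framework is immediate: take the vertices $5j-2$ for $1\le j\le\lfloor n/5\rfloor$, together with the extra vertex $\min\left(5\lfloor n/5\rfloor+3,\,n\right)$ when $5\nmid n$. These are pairwise at distance at least $3$ in $L_n$, hence independent in $L_n^2$, and their closed neighbourhoods in $L_n^2$ cover $[n]$, so they form an independent dominating --- hence maximal independent --- set of size $\lceil n/5\rceil$, whose complement is a minimal vertex cover of size $n-\lceil n/5\rceil$. With this substitution (or with any other explicit independent dominating set of that size) your proof is complete and self-contained.
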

\begin{proof} By \cite[Corollary 5.6]{K}, $\bight(S/I(L_n^2))=\projdim(S/I(L_n^2))$. The statement follows.
\end{proof}

In order to compute the Castelnuovo--Mumford regularity, we have to determine the induced matching number of $L_n^2$.
\begin{Proposition}\label{indmatpath} Let $n\geq 3$ be an integer. Then $\indmat(L_n^2)=\left\lceil\frac{n-1}{4}\right\rceil$.
\end{Proposition}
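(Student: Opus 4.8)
The plan is to sandwich the size of an induced matching between a counting upper bound and an explicit construction. Throughout I use that $\{i,j\}\in E(L_n^2)$ precisely when $1\le|i-j|\le 2$, so every edge has the form $\{a,b\}$ with $a<b$ and $b-a\in\{1,2\}$, and that two edges of $L_n^2$ are non-adjacent exactly when every vertex of one differs from every vertex of the other by at least $3$ in absolute value. Consequently, inducedness of a matching is entirely a statement about index gaps between its edges.

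For the upper bound, let $M=\{e_1,\dots,e_s\}$ be an induced matching and order its edges by their smaller endpoint, writing $e_k=\{a_k,b_k\}$ with $a_1<a_2<\cdots<a_s$. First I would rule out interleaving and establish the gap estimate $a_{k+1}-b_k\ge 3$: since the edges are disjoint and each spans at most two indices, the pair $\{a_{k+1},b_{k+1}\}$ lies entirely to the right of $b_k$, and if $a_{k+1}-b_k\le 2$ then $b_k$ and $a_{k+1}$ would be at index-distance at most $2$, hence adjacent in $L_n^2$, contradicting inducedness. Summing the spans and the gaps yields
\[
b_s-a_1=\sum_{k=1}^{s}(b_k-a_k)+\sum_{k=1}^{s-1}(a_{k+1}-b_k)\ge s+3(s-1)=4s-3.
\]
Since $1\le a_1$ and $b_s\le n$, this forces $4s-3\le n-1$, so $s\le(n+2)/4$, and as $s$ is an integer, $s\le\lfloor(n+2)/4\rfloor=\lceil(n-1)/4\rceil$.

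For the matching lower bound I would exhibit the family of short edges $e_k=\{4k+1,\,4k+2\}$ for $0\le k\le\lceil(n-1)/4\rceil-1$. Each $e_k$ is a genuine edge of $L_n^2$ (endpoints at distance $1$); the top of $e_k$ is $4k+2$ and the bottom of $e_{k+1}$ is $4k+5$, so consecutive edges are separated by a gap of exactly $3$, and a fortiori all pairs are non-adjacent, so the family is an induced matching. It then remains to check that the largest index used, $4(\lceil(n-1)/4\rceil-1)+2$, does not exceed $n$ — which follows from $4\lceil(n-1)/4\rceil\le n+2$ — and that the family has exactly $\lceil(n-1)/4\rceil$ edges. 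Matching the two bounds gives $\indmat(L_n^2)=\lceil(n-1)/4\rceil$.

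The only delicate points are arithmetic: the identity $\lfloor(n+2)/4\rfloor=\lceil(n-1)/4\rceil$ invoked in the upper bound, and the verification that the explicit matching stays inside $\{1,\dots,n\}$; both reduce to a routine check on $n\bmod 4$. The genuinely structural step, and the one I would be most careful to get right, is the claim that consecutive matching edges must be separated by an index gap of at least $3$, since this is exactly what converts the inducedness hypothesis into the length estimate $4s-3$ driving the whole bound.
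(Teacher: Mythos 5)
Your proof is correct and follows essentially the same route as the paper: the same explicit induced matching $\{1,2\},\{5,6\},\ldots$ for the lower bound, and the same gap argument (consecutive matching edges separated by at least $3$, each edge spanning at least $1$, hence a step of at least $4$ per edge) for the upper bound. Your write-up is somewhat more careful than the paper's, since you justify the gap estimate from inducedness, make the telescoping sum explicit, and verify the identity $\lfloor(n+2)/4\rfloor=\lceil(n-1)/4\rceil$, all of which the paper leaves implicit.
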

\begin{proof} Let $k=\left\lceil\frac{n-1}{4}\right\rceil$, that is $n-1=4k-r$, where $0\leq r\leq 3$. We consider the set $$F=\{\{1,2\},\{5,6\},\ldots,\{4k-3,4k-2\}\},$$ where $4k-3=4(k-1)+1$. It is easily seen that $F$ is an induced matching, not necessarily maximal. Therefore $$\indmat(L_n^2)\geq|F|=\left\lceil\frac{n-1}{4}\right\rceil.$$
	
	For the other inequality, we consider an arbitrary maximal induced matching $F=\{\{i_1,j_1\},\{i_2,j_2\},\ldots,\{i_d,j_d\}\}$. Since $F$ is an induced matching, the inequalities $i_k-j_{k-1}\geq 3$ and $j_k\geq i_k+1$ should hold, for any $2\leq k\leq n$. Therefore $j_{k}-j_{k-1}\geq 4$ for any $2\leq k\leq n$, that is $|F|\leq\left\lceil\frac{n-1}{4}\right\rceil$.
	
\end{proof}

\begin{Theorem}\label{reg} Let $n\geq 3$ be an integer. Then $\reg(S/I(L_n^2))=\left\lceil\frac{n-1}{4}\right\rceil$.
\end{Theorem}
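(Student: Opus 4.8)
The plan is to combine two facts already established in the excerpt, so the argument will be essentially immediate. First I would observe that $L_n^2$ is a chordal graph: the path $L_n$ is a tree, and Harary and Ross proved (as recalled just after the definition of the square of a graph in Section~\ref{sec:1}) that the square of any tree is chordal. This is the structural input that unlocks the sharp regularity formula.

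Next I would invoke Proposition~\ref{indmatgen}(b), which states that for a chordal graph $G$ one has $\reg\, S/I(G)=\indmat(G)$. Applying this to $G=L_n^2$ gives $\reg\, S/I(L_n^2)=\indmat(L_n^2)$, turning the homological question into a purely combinatorial one about the largest induced matching.

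Finally I would quote Proposition~\ref{indmatpath}, where the induced matching number of $L_n^2$ was computed to be $\indmat(L_n^2)=\left\lceil\frac{n-1}{4}\right\rceil$. Substituting yields $\reg\, S/I(L_n^2)=\left\lceil\frac{n-1}{4}\right\rceil$, which is exactly the claim.

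I do not expect any genuine obstacle in this step: all the work has already been done upstream. The only point requiring a moment's care is the verification that the chordality hypothesis of Proposition~\ref{indmatgen}(b) genuinely applies, but this is guaranteed because $L_n$ is a tree and squares of trees are chordal. In that sense the real content of the theorem resides in Proposition~\ref{indmatpath} (the induced-matching computation), while the present statement is a clean corollary obtained by chaining the chordality of $L_n^2$, the equality $\reg=\indmat$ for chordal graphs, and the explicit value of $\indmat(L_n^2)$.
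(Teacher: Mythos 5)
Your proposal is correct and follows exactly the paper's own proof: the paper likewise applies Proposition~\ref{indmatgen}(b) (regularity equals induced matching number for chordal graphs, with chordality of $L_n^2$ coming from the Harary--Ross result on squares of trees) and then substitutes the value $\indmat(L_n^2)=\left\lceil\frac{n-1}{4}\right\rceil$ from Proposition~\ref{indmatpath}. Your only addition is spelling out the chordality check explicitly, which the paper leaves implicit.
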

\begin{proof} According to Proposition~\ref{indmatgen}b), one has that $$\reg(S/I(L_n^2))=\indmat(L_n^2).$$ The statement follows by Proposition \ref{indmatpath}.
\end{proof}

In particular, we recover Theorem~\ref{linrestrees} case b(i) where we characterized all the squares of path graph whose edge ideal has a linear resolution:

\begin{Corollary} Let $n\geq 3$ be an integer. Then $S/I(L_n^2)$ has a linear resolution if and only if $n\leq 5$.
\end{Corollary}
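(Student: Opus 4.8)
The plan is to reduce the statement entirely to the regularity computation already carried out in Theorem~\ref{reg}. The key observation is that $I(L_n^2)$ is a monomial ideal generated in the single degree $2$, since each minimal generator is a squarefree quadric corresponding to an edge of $L_n^2$. For an ideal $I$ generated in one degree $d$, the property of having a linear resolution is equivalent to $\reg(I)=d$; using $\reg(I)=\reg(S/I)+1$, this is in turn equivalent to $\reg(S/I)=d-1$. In our situation $d=2$, so I would first record that $I(L_n^2)$ has a linear resolution if and only if $\reg(S/I(L_n^2))=1$. This is precisely the regularity criterion that underlies Fr\"oberg's theorem and is used implicitly throughout Section~\ref{sec:3}.

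Next I would simply substitute the value of the regularity. By Theorem~\ref{reg} we have $\reg(S/I(L_n^2))=\left\lceil\frac{n-1}{4}\right\rceil$, so the linear resolution property is equivalent to the elementary equation $\left\lceil\frac{n-1}{4}\right\rceil=1$. Solving this under the standing hypothesis $n\geq 3$ finishes the argument: one has $\left\lceil\frac{n-1}{4}\right\rceil=1$ exactly when $1\leq n-1\leq 4$, that is $2\leq n\leq 5$, which combined with $n\geq 3$ gives $n\leq 5$. For $n\geq 6$ one instead gets $\left\lceil\frac{n-1}{4}\right\rceil\geq 2$, so the resolution fails to be linear.

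I do not anticipate any genuine obstacle, as all the substantive work resides in Theorem~\ref{reg} and Proposition~\ref{indmatpath} that feeds it; the corollary is merely the specialization of that regularity formula to the value $1$. The only point meriting a word of care is the standard equivalence between having a linear resolution and the regularity of the quotient being $d-1$ for an ideal generated in a single degree $d$, which I would invoke for $d=2$ without reproving it.
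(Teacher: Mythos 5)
Your proof is correct and follows essentially the same route as the paper: both reduce the linear resolution property to the condition $\reg(S/I(L_n^2))=1$ (valid since the ideal is generated in degree $2$), then invoke Theorem~\ref{reg} to obtain $\left\lceil\frac{n-1}{4}\right\rceil=1$, which holds for $3\leq n\leq 5$. Your write-up is slightly more explicit about the regularity-shift justification, but the argument is the same.
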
	
\begin{proof} $S/I(L_n^2)$ has a linear resolution if and only if $\reg(S/I(L_n^2))=1$, that is $\left\lceil\frac{n-1}{4}\right\rceil=1$ which is equivalent to $n\leq 5$.
\end{proof}

We consider now another particular class of trees called double brooms. In graph theory, \textit{a double broom} is a graph on $n_1+n+n_2$ vertices obtained from the path graph $L_n$ by appending to the first and the last vertex a set of $n_1$ and $n_2$ edges, respectively. One also denotes this double broom by $P(n_1,n,n_2)$. If $n=2$ the graph is called \textit{a double star}.
\begin{center}
	
	\begin{figure}[h]
		\includegraphics[height=2.3cm]{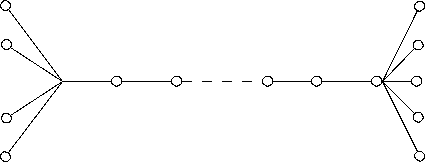}
		
		\caption{A double broom}
	\end{figure}
	
\end{center}

\newpage
We determine the projective dimension of $S/I(T^2)$. In order to do this, we will consider first several particular cases determined by the length of $L_n$. We begin with the case of double stars.
\begin{Proposition}
	Let $n_1,n_2\geq2$ be two integers and $T$ the double star on the set of vertices $V=\{x_1,\ldots,x_{n_1-1},y_{1},\ldots,y_{n_2-1},x,y\}$ and with the set of edges $$E=\{\{x,y\}\}\cup\{\{x,x_i\}:1\leq i\leq n_1-1\}\cup\{\{y,y_i\}:1\leq i\leq n_2-1\}.$$ Then $$\projdim S/I\left(T^2\right)=n_1+n_2-1.$$
\end{Proposition}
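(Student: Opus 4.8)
The plan is to invoke Kimura's formula. Since $T$ is a tree, its square $G=T^2$ is chordal by the theorem of Harary and Ross, so Theorem~\ref{pdd'} applies and gives $\projdim S/I(T^2)=d'_{T^2}$. Hence everything reduces to showing $d'_{T^2}=n_1+n_2-1$. Note that $|V|=(n_1-1)+(n_2-1)+2=n_1+n_2$.

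First I would describe $G=T^2$ explicitly. Both $x$ and $y$ are cut-points of $T$, and passing to the square adds an edge between any two vertices at distance $2$. Each $y_j$ is at distance $2$ from $x$ through $y$, and each $x_i$ is already a neighbour of $x$, so in $G$ the vertex $x$ is adjacent to all of the remaining $n_1+n_2-1$ vertices; by symmetry $y$ is universal as well. Moreover any two of $x_1,\dots,x_{n_1-1}$ are at distance $2$ through $x$, so they form a clique, and likewise $y_1,\dots,y_{n_2-1}$; whereas $x_i$ and $y_j$ remain at distance $3$ and are non-adjacent. Thus $G$ is the union of two cliques glued along the edge $\{x,y\}$, whose only non-edges are the pairs $\{x_i,y_j\}$. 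The point to get right here is the universality of $x$ (and $y$); once this is in hand the optimization over bouquets collapses to a trivial counting bound.

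For the lower bound I would exhibit a single bouquet. Take $\mathcal{B}=\{B\}$ with root $x$ and flower set $F(B)=V\setminus\{x\}$; this is a genuine bouquet because $x$ is universal, and a one-element family is automatically semi-strongly disjoint. Then $|\mathcal{F}(\mathcal{B})|=n_1+n_2-1$, so $d'_{T^2}\geq n_1+n_2-1$.

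For the upper bound, let $\mathcal{B}$ be an arbitrary semi-strongly disjoint set of bouquets. Because the bouquets are vertex-disjoint and no vertex is at once root and flower of its own bouquet, the sets $\mathcal{F}(\mathcal{B})$ and $\mathcal{R}(\mathcal{B})$ are disjoint subsets of $V$, so $|\mathcal{F}(\mathcal{B})|\leq |V|-|\mathcal{R}(\mathcal{B})|=n_1+n_2-|\mathcal{R}(\mathcal{B})|$. If $\mathcal{B}\neq\emptyset$ then $|\mathcal{R}(\mathcal{B})|\geq 1$, giving $|\mathcal{F}(\mathcal{B})|\leq n_1+n_2-1$, and if $\mathcal{B}=\emptyset$ the bound is trivial since $n_1,n_2\geq 2$. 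Taking the maximum yields $d'_{T^2}\leq n_1+n_2-1$, and combining with the lower bound and Theorem~\ref{pdd'} finishes the proof. I expect the only real care is in seeing that this uniform bound already rules out the competing configurations: since $\mathcal{R}(\mathcal{B})$ must be an independent set of $G$ and every independent set here has at most two vertices (one $x_i$ and one $y_j$), any attempt to use two roots wastes an extra vertex and can never beat the single universal bouquet, exactly as the inequality $|\mathcal{F}(\mathcal{B})|\leq |V|-|\mathcal{R}(\mathcal{B})|$ records.
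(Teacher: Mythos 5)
Your proof is correct and takes essentially the same route as the paper: both reduce the statement to computing $d'_{T^2}$ via Kimura's theorem (using that $T^2$ is chordal) and exhibit the same single bouquet rooted at the universal vertex $x$ with flower set $V\setminus\{x\}$. The only difference is that you justify the upper bound explicitly via the counting inequality $|\mathcal{F}(\mathcal{B})|\leq |V|-|\mathcal{R}(\mathcal{B})|\leq n_1+n_2-1$, whereas the paper simply asserts that this is the maximal number of flowers obtainable.
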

\begin{proof} It is easy to see that we may consider a bouquet $B$ in $T^2$ as follows: let $x$ be the root and $x_1,\ldots,x_{n_1-1},y,y_1,\ldots y_{n_2-1}$ the flowers. The stems of the bouquet are the edges which connect the root with each flower. Note that there are also edges of the form $\{x,y_i\}$ since $\dist_T(x,y_i)=2$ for all $1\leq i\leq n_2-1$. Since this is the maximal number of flowers that we can get, we have that $d'_T=n_1+n_2-1$. Since $T^2$ is chordal, we also get that $\projdim S/I(T^2)=n_1+n_2-1$.
\end{proof}
Similarly, we can prove the following result: 

\begin{Proposition} Let $T$ be the double broom $P(n_1-1,3,n_2-1)$ with the set of vertices $V=\{x_1,\ldots,x_{n_1-1},y_{1},\ldots,y_{n_2-1},x,y,z\}$ and with the set of edges $$E=\{\{x,z\},\{z,y\}\}\cup\{\{x,x_i\}:1\leq i\leq n_1-1\}\cup\{\{y,y_i\}:1\leq i\leq n_2-1\}.$$ Then $\projdim S/I\left(T^2\right)=n_1+n_2$. 
\end{Proposition}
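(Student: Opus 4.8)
The plan is to reduce the computation to the combinatorial quantity $d'_{T^2}$. Since $T$ is a tree, $T^2$ is chordal, so Theorem~\ref{pdd'} gives $\projdim S/I(T^2) = d'_{T^2}$; thus it suffices to prove that $d'_{T^2} = n_1 + n_2$. This runs exactly parallel to the double-star proposition, the only new feature being the central vertex $z$ of the middle path.

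The heart of the argument is the observation that $z$ is a \emph{universal} vertex of $T^2$, i.e. it is adjacent in $T^2$ to every other vertex. Indeed, in $T$ one has $\mathcal{N}_T(z) = \{x,y\}$, and every remaining vertex is a neighbour of $x$ or of $y$, hence lies at distance exactly $2$ from $z$ in $T$; thus $\dist_T(z,v) \le 2$ for all $v \in V \setminus \{z\}$, which means $\{z,v\} \in E(T^2)$. Consequently I would take the single bouquet $B$ whose root is $z$ and whose flowers are all of the remaining $n_1 + n_2$ vertices. The one-element family $\mathcal{B} = \{B\}$ is vacuously semi-strongly disjoint, and $|\mathcal{F}(\mathcal{B})| = n_1 + n_2$, so $d'_{T^2} \ge n_1 + n_2$.

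For the reverse inequality I would argue by a counting estimate. For any semi-strongly disjoint family $\mathcal{B}$ the sets $\mathcal{F}(\mathcal{B})$ and $\mathcal{R}(\mathcal{B})$ are disjoint, since a root of a bouquet is never one of its own flowers and the bouquets are vertex-disjoint; hence $|\mathcal{F}(\mathcal{B})| \le |V| - |\mathcal{R}(\mathcal{B})|$. As $|V| = n_1 + n_2 + 1$ and any family realizing a positive number of flowers has at least one root, we obtain $|\mathcal{F}(\mathcal{B})| \le n_1 + n_2$. Therefore $d'_{T^2} = n_1 + n_2$ and the proposition follows. There is no serious obstacle here: everything hinges on spotting that $z$ is universal in $T^2$, after which the lower bound is immediate and the upper bound is a one-line counting argument.
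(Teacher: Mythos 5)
Your proof is correct and follows essentially the same route as the paper: both exploit that $z$ is adjacent in $T^2$ to every other vertex (distance at most $2$ in $T$), take the single bouquet rooted at $z$ with all $n_1+n_2$ remaining vertices as flowers, and conclude via Kimura's theorem for chordal graphs that $\projdim S/I(T^2)=d'_{T^2}$. In fact your counting argument for the upper bound ($\mathcal{F}(\mathcal{B})$ and $\mathcal{R}(\mathcal{B})$ are disjoint and any nonempty family has a root, so $|\mathcal{F}(\mathcal{B})|\leq |V|-1=n_1+n_2$) supplies a rigorous justification for the step the paper only asserts with ``this is the maximal number of flowers that we can get.''
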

\begin{proof} We prove as before. We consider the bouquet $B$ in $T^2$ as follows: let $z$ be the root and $x_1,\ldots,x_{n_1-1},x,y,y_1,\ldots y_{n_2-1}$ the flowers. The stems of the bouquet are the edges which connect the root with each flower (all these edges exist since the corresponding vertices are at distance at most $2$ in $T$). Since this is the maximal number of flowers that we can get, we have that $d'_T=n_1+n_2$. Since $T^2$ is chordal, we also get that $\projdim S/I(T^2)=n_1+n_2$.
\end{proof}
\begin{Proposition} Let $T$ be the double broom $P(n_1-1,k,n_2-1)$, $3<k\leq8$ with the set of vertices $V=\{x_1,\ldots,x_{n_1-1},y_{1},\ldots,y_{n_2-1},x,y,z_1,\ldots,z_{k-2},\}$, $|V|=n$,  and with the set of edges $$E=\{\{z_i,z_{i+1}:1\leq i\leq k-3 \}\}\cup\{\{x,z_1\},\{z_{k-2},y\}\}\cup\{\{x,x_i\}:1\leq i\leq n_1-1\}\cup$$ $$\cup\{\{y,y_i\}:1\leq i\leq n_2-1\}.$$ Then $\projdim S/I\left(T^2\right)=n-2$. 
\end{Proposition}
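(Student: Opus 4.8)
The plan is to follow the template of the two preceding propositions: since squares of trees are chordal, Theorem~\ref{pdd'} reduces the problem to computing the combinatorial invariant $d'_{T^2}$, and I would prove $\projdim S/I(T^2)=n-2$ by establishing $d'_{T^2}=n-2$ through matching lower and upper bounds. The new phenomenon compared with the cases $k=2,3$ (the double star and $P(n_1-1,3,n_2-1)$) is that for $k\geq 4$ the graph $T^2$ has no universal vertex, so a single bouquet can no longer carry all $n-1$ remaining vertices as flowers; the extremal configuration will instead use exactly two bouquets.

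For the lower bound I would exhibit a semi-strongly disjoint set $\mathcal{B}=\{B_1,B_2\}$ with $|\mathcal{F}(\mathcal{B})|=n-2$. Concretely, I would pick two vertices $r_1,r_2$ of $T^2$ that are non-adjacent and whose closed neighbourhoods already cover $V$, declare them the roots, and assign every other vertex as a flower to a root it is adjacent to (breaking ties arbitrarily). Then every non-root vertex is a flower, so $|\mathcal{F}(\mathcal{B})|=n-2$, while $V(B_1)\cap V(B_2)=\emptyset$ and the non-adjacency of $r_1,r_2$ give semi-strong disjointness. The roots can be taken to be $x$ and $y$ when $k\leq 6$ (here $\mathcal{N}_{T^2}[x]$ reaches $z_1,z_2$ and $\mathcal{N}_{T^2}[y]$ reaches $z_{k-3},z_{k-2}$, and $\dist_T(x,y)=k-1\geq 3$), and $z_1,z_{k-2}$ when $6\leq k\leq 8$ (here $\mathcal{N}_{T^2}[z_1]$ reaches $z_1,z_2,z_3$ together with $x$ and all $x_i$, and $\mathcal{N}_{T^2}[z_{k-2}]$ reaches $z_{k-4},z_{k-3},z_{k-2}$ together with $y$ and all $y_j$, with $\dist_T(z_1,z_{k-2})=k-3\geq 3$). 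For every $k$ with $3<k\leq 8$ one of these two pairs exhausts the central path as well as both brooms.

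For the upper bound I would argue straight from the definitions. In any semi-strongly disjoint set $\mathcal{B}$ the roots and flowers are pairwise distinct vertices, so $|\mathcal{F}(\mathcal{B})|\leq n-|\mathcal{R}(\mathcal{B})|$; hence as soon as $\mathcal{B}$ has two or more bouquets, $|\mathcal{F}(\mathcal{B})|\leq n-2$. The only remaining case is a single bouquet, whose flowers all lie in the $T^2$-neighbourhood of its root, so $|\mathcal{F}(\mathcal{B})|\leq\Delta(T^2)$. Since $\diam(T)=k+1\geq 5$ (realised by a pair $x_i,y_j$), the radius of $T$ is at least $3$, so no vertex of $T$ is within distance $2$ of all others; thus $T^2$ has no universal vertex and $\Delta(T^2)\leq n-2$. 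In all cases $|\mathcal{F}(\mathcal{B})|\leq n-2$, giving $d'_{T^2}\leq n-2$, and combined with the construction this yields $d'_{T^2}=n-2$ and hence the claim via Theorem~\ref{pdd'}.

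The delicate point, and the reason the hypothesis is exactly $3<k\leq 8$, is that two bouquets can cover the whole graph only while the central path is short. A root at $z_1$ reaches only $z_1,z_2,z_3$ among the interior vertices and a root at $z_{k-2}$ only $z_{k-4},z_{k-3},z_{k-2}$, so two bouquets touch at most six of the $k-2$ vertices $z_1,\dots,z_{k-2}$, forcing $k-2\leq 6$. I expect the main work to be the routine but error-prone verification that, for each value of $k$ in $\{4,\dots,8\}$, the chosen root pair really is non-adjacent in $T^2$ and its closed neighbourhoods leave no interior vertex uncovered; the boundary cases $k=6$ (where both choices of root pair are legitimate) and $k=8$ (where the interior coverage is tight) are the ones I would check explicitly.
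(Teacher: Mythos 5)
Your proposal is correct and follows essentially the same route as the paper: reduce to computing $d'_{T^2}$ via Kimura's theorem (Theorem~\ref{pdd'}) and realize $n-2$ flowers with two vertex-disjoint bouquets rooted at non-adjacent vertices ($x,y$ for small $k$, and $z_1,z_{k-2}$ for $6\leq k\leq 8$), which is exactly the paper's construction. If anything, your upper bound --- $|\mathcal{F}(\mathcal{B})|\leq n-|\mathcal{R}(\mathcal{B})|$ once there are at least two bouquets, and $|\mathcal{F}(\mathcal{B})|\leq\Delta(T^2)\leq n-2$ for a single bouquet since $\diam(T)=k+1\geq 5$ rules out a universal vertex in $T^2$ --- is argued more carefully than the paper's, which simply asserts that the exhibited configuration attains the maximal possible number of flowers.
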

\begin{proof} We will consider two bouquets $B_1$ and $B_2$ in $T^2$ and their construction depends on $k$.
	
	\textbf{Case 1:} If $k=4$, let $x$ and $y$ be the roots, The flower set of $B_1$ is $\{x_1,\ldots,x_{n_1-1},z_{1}\}$ and the flower set of $B_2$ is $\{y_1,\ldots,y_{n_2-1},z_{2}\}$. 
	
	\textbf{Case 2:} If $k=5$, let $x$ and $y$ be the roots. The flower sets of $B_1$ and $B_2$ are $\{x_1,\ldots,x_{n_1-1},$ $z_{1},z_2\}$ and $\{y_1,\ldots,y_{n_2-1},z_{3}\}$ respectively. 
	
	\textbf{Case 3:} If $6\leq k\leq 8$, let $z_1$ and $z_{k-2}$ be roots and let $i=\left[\frac{k-2}{2}\right]$. Note that $i\leq 3$. We consider the flower set of $B_1$ to be $\{x_1,\ldots,x_{n_1-1},x,z_{2},\ldots z_{i}\}$, and the flower set of $B_2$ to be $\{y_1,\ldots,y_{n_2-1},y,z_{i+1},\ldots, z_{k-3}\}$. 
	
	Note that in each of the above cases the stems of the bouquet are the edges which connect the root with each flower from the corresponding bouquet (all these edges exist since the corresponding vertices are at distance at most $2$ in $T$). It is clear that the roots are not adjacent. Moreover, $V(G)=\mathcal{F}(\mathcal{B})\cup \mathcal{R}(\mathcal{B})$ where $\mathcal{B}=\{B_1,B_2\}$. Since this is the maximal number of flowers that we can get, we have that $d'_{T^2}=n-2$. Since $T^2$ is chordal, we also get that $\projdim S/I(T^2)=n-2$.
\end{proof}

We assume now that $k>8$ and we get the following result:
\begin{Proposition} Let $T$ be the double broom $P(n_1-1,k,n_2-1)$, $8<k$ with the set of vertices $V=\{x_1,\ldots,x_{n_1-1},y_{1},\ldots,y_{n_2-1},x,y,z_1,\ldots,z_{k-2},\}$, $|V|=n$,  and with the set of edges $$E=\{\{z_i,z_{i+1}:1\leq i\leq k-3 \}\}\cup\{\{x,z_1\},\{z_{k-2},y\}\}\cup\{\{x,x_i\}:1\leq i\leq n_1-1\}\cup$$ $$\cup\{\{y,y_i\}:1\leq i\leq n_2-1\}.$$ Then $\projdim S/I\left(T^2\right)=n-2-\left\lceil\frac{k-8}{5}\right\rceil$. 
\end{Proposition}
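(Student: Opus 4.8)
The plan is to reduce the computation to a domination problem on $T^2$ and then invoke Kimura's theorem. Since $T$ is a tree, $T^2$ is chordal by Harary--Ross, so by Theorem~\ref{pdd'} it suffices to compute $d'_{T^2}$. First I would record the structure of $T^2$: writing the central path of $T$ as $x=p_1, z_1=p_2,\dots,z_{k-2}=p_{k-1}, y=p_k$, the square $T^2$ induced on these $k$ vertices is exactly $L_k^2$, while each broom vertex $x_i$ is adjacent in $T^2$ precisely to $x$, to $z_1$, and to the other $x_j$ (and symmetrically for the $y_i$). The key bookkeeping observation is that for any semi-strongly disjoint family $\mathcal{B}$ one has $|\mathcal{F}(\mathcal{B})| = n - |\mathcal{R}(\mathcal{B})| - |U|$, where $U$ denotes the vertices that are neither roots nor flowers; since every flower is adjacent to its root, the set $\mathcal{R}(\mathcal{B})\cup U$ is a dominating set of $T^2$. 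Hence $d'_{T^2} = n - \gamma$, where $\gamma$ is the minimum cardinality of a dominating set of $T^2$, and the whole statement is equivalent to proving $\gamma = 2 + \left\lceil\frac{k-8}{5}\right\rceil$.

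For the upper bound on $\gamma$ (equivalently the lower bound $d'_{T^2}\geq n-2-\lceil (k-8)/5\rceil$) I would exhibit an explicit semi-strongly disjoint family, in the spirit of Proposition~\ref{dg2}. Take two end bouquets rooted at $z_1$ and $z_{k-2}$: the bouquet at $z_1$ absorbs as flowers all of $x_1,\dots,x_{n_1-1}, x, z_2, z_3$, and the one at $z_{k-2}$ absorbs $y_1,\dots,y_{n_2-1}, y, z_{k-3}, z_{k-4}$. These two roots dominate both brooms together with the path positions $1,2,3,4$ and $k-3,\dots,k$. The remaining central vertices occupy positions $5,\dots,k-4$, and the induced subgraph there is $L_{k-8}^2$; by Proposition~\ref{dg2} it admits a minimum dominating set that is independent and has $\left\lceil\frac{k-8}{5}\right\rceil$ elements, all of which are vertices of the segment, hence lie in positions $[5,k-4]$. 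As such they are non-adjacent in $T^2$ to $z_1$ (position $2$) and to $z_{k-2}$ (position $k-1$), so the full family is semi-strongly disjoint and every non-root vertex becomes a flower.

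The main work is the matching lower bound on $\gamma$: no dominating set of $T^2$ has fewer than $2+\left\lceil\frac{k-8}{5}\right\rceil$ vertices. Let $D$ be any dominating set. Because each $x_i$ has closed neighbourhood $\{x_i, x, z_1\}\cup\{x_j : j\neq i\}$, dominating the broom forces a vertex $r_x\in D$ lying in $\{x,z_1\}$ or among the $x_i$; any such vertex dominates, on the central path, at most the positions $\{1,2,3,4\}$ (the extreme case being $r_x=z_1$). Symmetrically there is $r_y\in D$ reaching at most $\{k-3,\dots,k\}$; for $k>8$ the vertices $r_x,r_y$ are distinct and lie outside the positions $[3,k-2]$. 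Now every central vertex in positions $5,\dots,k-4$ is adjacent in $T^2$ only to central vertices, all of them in positions $[3,k-2]$, so these $k-8$ vertices must be dominated by members of $D$ sitting in positions $[3,k-2]$, a set disjoint from $\{r_x,r_y\}$. Since one central vertex dominates at most $5$ consecutive positions, at least $\left\lceil\frac{k-8}{5}\right\rceil$ of them are needed; adding $r_x$ and $r_y$ gives $|D|\geq 2+\left\lceil\frac{k-8}{5}\right\rceil$.

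Combining the two bounds yields $\gamma = 2+\left\lceil\frac{k-8}{5}\right\rceil$, hence $d'_{T^2}=n-2-\left\lceil\frac{k-8}{5}\right\rceil$, and Theorem~\ref{pdd'} gives $\projdim S/I(T^2)=n-2-\left\lceil\frac{k-8}{5}\right\rceil$. I expect the lower bound to be the delicate step: one must argue rigorously that the two broom-dominating vertices are forced, that each is \emph{wasted} on at most four path positions near its end, and that the middle stretch genuinely behaves as an induced $L_{k-8}^2$ so that the counting of Proposition~\ref{dg2} transfers. The hypothesis $k>8$ is exactly what guarantees that this middle stretch is nonempty and that $r_x,r_y$ cannot overlap nor reach into it.
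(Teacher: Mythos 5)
Your argument is correct, and on the decisive step it follows a genuinely different route from the paper. The bouquet family you exhibit is the same as the paper's (roots $z_1$ and $z_{k-2}$ absorbing the two brooms together with $z_2,z_3$ and $z_{k-3},z_{k-4}$, plus the Proposition~\ref{dg2} family on the middle segment $z_4,\ldots,z_{k-5}$, which indeed induces $L^2_{k-8}$), so the inequality $d'_{T^2}\geq n-2-\left\lceil\frac{k-8}{5}\right\rceil$ is obtained identically. Where you diverge is in proving that no family can do better. The paper handles this by comparing degrees of the chosen roots ($\deg_{T^2}(z_1)=n_1+2$, $\deg_{T^2}(z_{k-2})=n_2+2$, every middle root of degree at most $4$) and asserting that ``we obtained the maximal number of flowers'' --- an informal count that never confronts an arbitrary competing family. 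You instead observe that for any semi-strongly disjoint family the roots together with the uncovered vertices form a dominating set of $T^2$, whence $d'_{T^2}\leq n-\gamma(T^2)$, and then bound the domination number below by a covering argument: any vertex dominating the $x$-broom reaches at most path positions $1$ through $4$, symmetrically for the $y$-broom, and the $k-8$ interior positions can only be dominated by central vertices, each covering at most five consecutive positions, forcing $\gamma(T^2)\geq 2+\left\lceil\frac{k-8}{5}\right\rceil$. This buys genuine rigor exactly where the paper's proof only gestures, and the two-sided relation between $d'_G$, dominating sets and independent dominating sets is a tool that would equally tighten the other double-broom propositions in this section.

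One claim should be restated more carefully: right after the bookkeeping observation you write ``Hence $d'_{T^2}=n-\gamma$''. At that stage only the inequality $d'_{T^2}\leq n-\gamma(T^2)$ is available; the reverse direction requires a minimum dominating set that is moreover \emph{independent} (so it can serve as a root set with every remaining vertex a flower), and in general graphs the minimum size of an independent dominating set can strictly exceed $\gamma$. Your proof does not actually suffer from this, because you never use the equality: the explicit family gives $d'_{T^2}\geq n-2-\left\lceil\frac{k-8}{5}\right\rceil$ directly, and combining it with $d'_{T^2}\leq n-\gamma(T^2)\leq n-2-\left\lceil\frac{k-8}{5}\right\rceil$ closes the argument (and shows, as a by-product, that for this graph the two domination invariants coincide). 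So present the equality $d'_{T^2}=n-\gamma(T^2)$ as a consequence at the end, not as the reduction at the start.
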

\begin{proof}We consider the following induced subgraphs of $T^2$: $G_1$ and $G_2$ are the induced subgraphs on the vertex set $\{x_1,\ldots,x_{n_1-1},x,z_1,z_2,z_3\}$ and $\{y_1,\ldots,y_{n_2-1},y,$ $z_{k-2},z_{k-3},z_{k-4}\}$, respectively, and $L^2_{k-8}$ the square of the path on the vertex set $\{z_4,\ldots,z_{k-5}\}$. It is a simple matter to check that $G_1=(T_{\mathcal{N}[z_1]})^2$ and $G_2=(T_{\mathcal{N}[z_{k-2}]})^2$.  We will show that $$d'_{T^2}=n_1+n_2+k-4-\left\lceil\frac{k-8}{5}\right\rceil.$$ Since $n=n_1+n_2+k-2$, the statement will follow. 
	
	We define a set of semi-strongly set of bouquets of $T^2$ as follows:
	\begin{itemize}
		\item[$-$] $B_1$ is the bouquet with the root $z_1$ and the flowers $\{x_1,\ldots,x_{n_1-1},x,z_2,z_3\}$. Note that the stems are $\{z_1,x_i\}$, $1\leq i\leq n_1-1$, $\{z_1,x\}$, $\{z_1,z_2\}$, $\{z_1,z_3\}$.
		\item[$-$] $B_2$ is the bouquet with the root $z_{k-2}$ and the flowers $\{y_1,\ldots,y_{n_2-1},y,z_{k-3},$ $z_{k-4}\}$. In this case, the stems are $\{z_{k-2},y_i\}$, $1\leq i\leq n_2-1$, $\{z_{k-2},y\}$, $\{z_{k-2},z_{k-3}\}$, $\{z_{k-2},z_{k-4}\}$.
		\item[$-$] $\mathcal{B'}$ is the strongly disjoint set of bouquets of the graph $L^2_{k-8}$ defined in Proposition \ref{dg2}.
	\end{itemize}
	Defined like this, $\mathcal{B}=\{B_1,B_2\}\cup\mathcal{B'}$ is a strongly disjoint set of bouquets of $T^2$. In particular, $d'_{T^2}\geq n_1+2+n_2+2+d'_{L^2_{k-8}}=n_1+n_2+k-4-\left\lceil\frac{k-8}{5}\right\rceil$. 
	
	We see at once that $\deg_{T^2}(z_1)=n_1+2$, $\deg_{T^2}(z_{k-2})=n_2+2$ and the degree of every root of $\mathcal{B}'$ is at most $4$. Therefore, we obtained the maximal number of flowers. This implies that $$d'_{T^2}= n_1+2+n_2+2+d'_{L^2_{k-8}}=n-2-\left\lceil\frac{k-8}{5}\right\rceil.$$ Since $T^2$ is chordal, the statement follows.
\end{proof}

We can summarize the above results as follows:
\begin{Theorem}
	Let $T$ be the double broom $P(n_1-1,k,n_2-1)$, with $n_1,n_2,k\geq2$ and $n=n_1+n_2+k-2$. Then$$\projdim(S/I(T^2))=\left\{\begin{array}{cc}
	n-1,&\mbox{ if } k\in\{2,3\}\\
	n-2,&\mbox{ if } 3<k\leq8\\
	n-2-\left\lceil\frac{k-8}{5}\right\rceil,&\mbox{ if } k>8
	\end{array}\right..$$
\end{Theorem}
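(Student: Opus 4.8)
The plan is to obtain this statement purely as a consolidation of the four preceding propositions, each of which computes $\projdim S/I(T^2)$ for a double broom $P(n_1-1,k,n_2-1)$ with the middle-path parameter $k$ restricted to a fixed range. The interval $k\geq 2$ is partitioned as $\{2,3\}$, $\{4,\dots,8\}$, and $k>8$, matching exactly the hypotheses of those four results. The only genuine work is to rewrite each proposition's output in terms of the single quantity $n=n_1+n_2+k-2$ and to verify that the three resulting pieces assemble into the claimed piecewise formula.

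First I would treat $k\in\{2,3\}$, which is where the translation is not immediate. For $k=2$ the double broom is a double star, and the double-star proposition gives $\projdim S/I(T^2)=n_1+n_2-1$; since $n=n_1+n_2$ when $k=2$, this equals $n-1$. For $k=3$ the corresponding proposition gives $\projdim S/I(T^2)=n_1+n_2$, and now $n=n_1+n_2+1$, so $n_1+n_2=n-1$ and we again obtain $n-1$. Thus the two apparently different formulas collapse onto the same value once expressed through $n$, yielding the first branch of the theorem.

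Next I would read off the remaining two branches directly. For $3<k\leq 8$ the relevant proposition already states $\projdim S/I(T^2)=n-2$, and for $k>8$ it states $\projdim S/I(T^2)=n-2-\left\lceil\frac{k-8}{5}\right\rceil$; both are phrased in terms of $n$ from the outset, so no rewriting is needed. Collecting the three branches reproduces precisely the piecewise expression in the statement.

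The step requiring the most care is not a calculation but a consistency check across the branch boundaries: I would confirm that the standing assumptions $n_1,n_2\geq 2$ here coincide with those in each cited proposition, that the ranges of $k$ tile $\{2,3,4,\dots\}$ without overlap, and that the transition from the third to the fourth branch is coherent, since at $k=8$ the value is $n-2$ while at $k=9$ the last branch gives $n-2-\left\lceil\frac{1}{5}\right\rceil=n-3$. As each branch is supplied by a proposition already proved and every parameter substitution is elementary, the theorem follows.
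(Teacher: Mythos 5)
Your proposal is correct and matches the paper exactly: the paper states this theorem with the single line ``We can summarize the above results as follows,'' i.e.\ it is precisely the consolidation of the four preceding propositions that you carry out. Your translation of the $k=2$ and $k=3$ cases ($n_1+n_2-1=n-1$ and $n_1+n_2=n-1$ respectively) is the only nontrivial bookkeeping, and you have done it correctly.
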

As a consequence, we get 
\begin{Corollary}\label{depth}
	Let $T$ be the double broom $P(n_1-1,k,n_2-1)$, with $n_1,n_2,k\geq2$ and $n=n_1+n_2+k-2$. Then$$\depth(S/I(T^2))=\left\{\begin{array}{cc}
	1,&\mbox{ if } k\in\{2,3\}\\
	2,&\mbox{ if } 3<k\leq8\\
	2+\left\lceil\frac{k-8}{5}\right\rceil,&\mbox{ if } k>8
	\end{array}\right..$$
\end{Corollary}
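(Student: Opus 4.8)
The plan is to invoke the Auslander--Buchsbaum formula, which over the polynomial ring $S$ relates the depth and the projective dimension of $S/I(T^2)$. Since the preceding Theorem already supplies a complete formula for $\projdim(S/I(T^2))$ in each of the three ranges of $k$, the only remaining work is to translate that projective dimension into the depth, so the corollary is essentially immediate.

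First I would recall that $S=\kk[x_1,\ldots,x_n]$ is a polynomial ring in exactly $n=n_1+n_2+k-2$ variables, one for each vertex of $T$, and hence $\depth S = \dim S = n$ because $S$ is Cohen--Macaulay of dimension $n$. The Auslander--Buchsbaum formula then gives
$$\depth(S/I(T^2)) = \depth S - \projdim(S/I(T^2)) = n - \projdim(S/I(T^2)).$$
Substituting the three cases from the Theorem now yields the claimed values directly: for $k\in\{2,3\}$ one obtains $n-(n-1)=1$; for $3<k\leq 8$ one obtains $n-(n-2)=2$; and for $k>8$ one obtains $n-\bigl(n-2-\lceil\tfrac{k-8}{5}\rceil\bigr)=2+\lceil\tfrac{k-8}{5}\rceil$, which matches the stated piecewise formula.

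There is essentially no obstacle in this step: all the combinatorial and homological content lives in the projective dimension computation of the preceding Theorem, which in turn rests on the semi-strongly disjoint bouquet analysis and on $T^2$ being chordal (so that Theorem~\ref{pdd'} applies). The only point deserving an explicit remark is the identity $\depth S = n$, which is what allows the Auslander--Buchsbaum formula to convert the projective dimension into the depth; once that is noted, the three cases follow by routine substitution.
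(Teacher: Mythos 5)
Your proof is correct and follows exactly the route the paper intends: the corollary is a direct consequence of the preceding theorem's projective dimension formula via the Auslander--Buchsbaum formula $\depth(S/I(T^2)) = n - \projdim(S/I(T^2))$, with $\depth S = n$ since $S$ is a polynomial ring in $n = n_1+n_2+k-2$ variables. The case-by-case substitution you carry out is precisely what the paper leaves implicit.
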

Next, we determine the Krull dimension for edge ideals of squares double brooms.

\begin{Proposition}
	Let $T=P(n_1-1,k,n_2-1)$ where $n_1,n_2\geq2$ and $k\geq4$. Then $$\dim S/I(T^2)=\left\lceil\frac{k-4}{3}\right\rceil+2.$$
\end{Proposition}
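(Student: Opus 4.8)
The plan is to compute the Krull dimension through maximal independent sets. By Proposition~\ref{dimindep}, $\dim S/I(T^2)$ equals the largest cardinality of an independent set of $T^2$, and a set $W$ is independent in $T^2$ precisely when any two of its vertices lie at distance at least $3$ in $T$. I would first record the structural constraints forced by the two brooms: since the leaves $x_1,\dots,x_{n_1-1}$ are pairwise at distance $2$ in $T$ (and likewise the $y_j$), any independent set of $T^2$ contains at most one $x$-leaf and at most one $y$-leaf.

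Next I would encode $T$ by a linear coordinate along its central path. Assign to each vertex a position: $\phi(x)=0$, $\phi(z_j)=j$ for $1\le j\le k-2$, $\phi(y)=k-1$, every $x$-leaf the position $-1$, and every $y$-leaf the position $k$. A direct inspection of the unique paths in the tree shows that for every pair of vertices except two leaves of the same broom one has $\dist_T(v,v')=|\phi(v)-\phi(v')|$ (the three cases being backbone--backbone, backbone--leaf, and leaf--leaf on opposite ends). Since same-broom leaves never appear together in $W$, the map $\phi$ is injective on any independent set $W$ and sends it to a set $A\subseteq\{-1,0,\dots,k\}$ of integers with pairwise differences at least $3$, so $|W|\le \max|A|$.

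The window $\{-1,0,\dots,k\}$ consists of $k+2$ consecutive integers, so the maximal number of elements with pairwise gaps at least $3$ equals $\lceil (k+2)/3\rceil$; this is exactly Proposition~\ref{dimLn2} applied to $L_{k+2}$. For the matching lower bound I would exhibit the explicit set of positions $\{-1+3t:\ 0\le t\le \lfloor (k+1)/3\rfloor\}$, which lies in $[-1,k]$, has pairwise gaps equal to $3$, and has cardinality $\lceil (k+2)/3\rceil$. Realizing position $-1$ by one $x$-leaf (which exists since $n_1\ge2$), the interior positions by the backbone vertices, and position $k$, when it is used, by one $y$-leaf (which exists since $n_2\ge2$) produces an independent set of $T^2$ of this size. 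Combining the two bounds gives $\dim S/I(T^2)=\lceil (k+2)/3\rceil$, and the stated form follows from the identity $\lceil (k+2)/3\rceil=\lceil (k-4)/3\rceil+2$.

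The main obstacle is not the counting, which is routine, but making the reduction exact: one must verify the distance formula $\dist_T(v,v')=|\phi(v)-\phi(v')|$ for every admissible pair and confirm that the only exceptional pairs are same-broom leaves, which are already excluded by independence. Once this coordinate model is justified, both the upper bound and the construction become immediate, and the hypotheses $n_1,n_2\ge2$ enter precisely to guarantee that the extreme positions $-1$ and $k$ can be occupied by leaves.
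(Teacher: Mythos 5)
Your proof is correct, and it is organized along a genuinely different route than the paper's. The paper also starts from Proposition~\ref{dimindep}, but then argues by cases on $k$: for $k=4$ and $k=5$ it exhibits maximum independent sets directly, and for $k>5$ it decomposes $T^2$ into the two maximal cliques $C(x)=\mathcal{K}_{n_1+1}$ and $C(y)=\mathcal{K}_{n_2+1}$ together with the induced middle path square $L_{k-4}^2$ on $z_2,\ldots,z_{k-3}$; since an independent set meets each clique in at most one vertex, Proposition~\ref{dimLn2} applied to $L_{k-4}$ gives $2+\left\lceil\frac{k-4}{3}\right\rceil$, attained by one leaf from each broom plus a maximum independent set in the middle. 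You instead absorb the two brooms into the extra coordinates $-1$ and $k$ at the ends of the backbone, verify the distance formula $\dist_T(v,v')=|\phi(v)-\phi(v')|$ (with same-broom leaf pairs as the only exceptions, and those are excluded by independence since such leaves are at distance $2$), and reduce the whole problem to gap-$\geq 3$ subsets of $k+2$ consecutive integers, i.e.\ to Proposition~\ref{dimLn2} for $L_{k+2}$; the identity $\left\lceil\frac{k+2}{3}\right\rceil=\left\lceil\frac{k-4}{3}\right\rceil+2$ finishes, and your arithmetic checks out. What your version buys: it is uniform in $k$ (no separate small cases), and the upper bound is airtight, because injectivity of $\phi$ on independent sets together with the distance formula replaces the paper's somewhat asserted claim that a maximum independent set ``has to'' consist of one vertex per clique plus a maximum independent set of the middle. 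What the paper's version buys: it is shorter given the clique structure of squares of trees already established (Proposition~\ref{propT2}), and its decomposition produces the stated formula $\left\lceil\frac{k-4}{3}\right\rceil+2$ with no ceiling manipulation. Both arguments use $n_1,n_2\geq 2$ identically: a leaf must exist at each end to realize the extreme positions.
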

\begin{proof} In order to fix the notations, let's assume that the set of vertices of $T$ is $V=\{x_1,\ldots,x_{n_1-1},y_{1},\ldots,y_{n_2-1},x,y,z_1,\ldots,z_{k-2},\}$, $|V|=n$,  and with the set of edges $$E=\{\{z_i,z_{i+1}:1\leq i\leq k-3 \}\}\cup\{\{x,z_1\},\{z_{k-2},y\}\}\cup\{\{x,x_i\}:1\leq i\leq n_1-1\}\cup$$ $$\cup\{\{y,y_i\}:1\leq i\leq n_2-1\}.$$ 
	
	According to Proposition \ref{dimindep}, there is a maximal independent set $W$ such that $\dim S/I\left(T^2\right)=|W|$. Let's assume that $W$ is such a maximal independent set and $|W|=d$. We have to prove that $d=\lceil\frac{k-4}{3}\rceil+2$. 
	
	If $k=4$, we consider the induced subgraphs $G_1=C(x)=\mathcal{K}_{n_1+1}$ and $G_2=C(y)=\mathcal{K}_{n_2+1}$. Then any maximal independent set can have at most two vertices. Since $W=\{x,y\}$ is a maximal independent set, $d=2$.
	
	If $k=5$, then $\diam(T^2)=4$. We consider as before the induced subgraphs $G_1=C(x)=\mathcal{K}_{n_1+1}$ and $G_2=C(y)=\mathcal{K}_{n_2+1}$. Note that $\{x_i,z_2\}\notin E(T^2)$ and $\{y_j,z_2\}\notin E(T^2)$ since the distance in $T$ between these vertices is $3$ for all $1\leq i\leq n_1-1$ and $1\leq j\leq n_2-1$. Then $W=\{x_1,z_2,y_1\}$ is a maximal independent set and, taking into account the shape of the graph, it has maximal cardinality, so $d=3$.
	
	If $k>5$, then we consider the induced subgraphs $G_1=C(x)=\mathcal{K}_{n_1+1}$, $G_2=C(y)=\mathcal{K}_{n_2+1}$ and $L_{k-4}^2$ which is the path on the vertices $z_2,\ldots, z_{k-3}$. In order to obtain a maximal independent set of maximal cardinality, one has to take a vertex from $G_1$, a vertex from $G_2$ and a maximal independent set of maximal cardinality for $L_{k-4}^2$. Since the largest maximal independent set of $L_{k-4}^2$ has $\left\lceil\frac{k-4}{3}\right\rceil$ (by Proposition~\ref{dimLn2}), the statement follows. 
\end{proof}

\section{Open questions and remarks}
\label{sec:5}
We end this paper with several remarks and open questions. The starting point of  this paper was to consider the behaviour of the invariants of the edge ideal when one consider the square of the graph. This was suggested by the fact that in Combinatorics, many researchers paid attention to combinatorial properties that are preserved by the square \cite{CC,LT,LT1,L,R,R1}. From the commutative algebra point of view, examples show that, there are large classes of trees for which the Castelnuovo--Mumford regularity of the edge ideal of the square decreases. In fact, the tree from Example~\ref{example} is the smallest one that we could find for which the regularity increases. Therefore, the following problem naturally appears:
\begin{Problem}
	Characterize all trees $T$ for which $\reg\,I(T))\geq\reg\,I(T^2)$.
\end{Problem}
One can also consider the behaviour of the projective dimension. Note that for path graphs, Morey proved that $\depth\,S/I(L_n)=\left\lceil\frac{n}{3}\right\rceil$ (\cite[Lemma 2.8]{M}) and we showed that $\depth\,S/I(L^2_n)=\left\lceil\frac{n}{5}\right\rceil$. Therefore $\projdim\,I(L_n)\leq\projdim\,I(L_n^2)$. Examples suggest that this is true in general. Therefore we assume that the next question has a positive answer:
\begin{Question}
	Is it true that if $T$ is a tree then $\projdim\,I(T)\leq\projdim\, I(T^2)$?
\end{Question}
If the above question has a negative answer, then one can consider the following problem:
\begin{Problem}
	Characterize all trees $T$ for which $$\projdim\,I(T)\leq\projdim\,I(T^2).$$
\end{Problem}
Note that same questions can be considered for different classes of graphs.

\end{document}